\documentclass[12pt]{amsart}
\usepackage{mathrsfs,amsmath,amssymb,amsfonts,amsthm,latexsym,marginnote}
\textwidth = 430pt
\textheight = 630pt
\hoffset = -32pt
\voffset = -35pt

\theoremstyle{plain}
\newtheorem{theorem}{Theorem}[section]
\newtheorem{proposition}[theorem]{Proposition}
\newtheorem{corollary}[theorem]{Corollary}
\newtheorem{lemma}[theorem]{Lemma}

\theoremstyle{definition}
\newtheorem{remark}[theorem]{Remark}
\newtheorem{example}[theorem]{Example}

\newcommand{\abs}[1]{\lvert#1\rvert}
\newcommand{\norm}[1]{\lVert#1\rVert}
\newcommand{\bigabs}[1]{\bigl\lvert#1\bigr\rvert}
\newcommand{\bignorm}[1]{\bigl\lVert#1\bigr\rVert}

\newcommand{\term}[1]{{\textit{\textbf{#1}}}}

\baselineskip 18pt

\title[Unbounded order convergence]{Unbounded Order Convergence and Application to Martingales without Probability}
\date{\today}
\author[N.~Gao]{Niushan Gao}
\author[F.~Xanthos]{Foivos Xanthos*}
\address{Department of Mathematical and Statistical Sciences, University of Alberta, Edmonton, AB, Canada T6G\,2G1}
\email{niushan@ualberta.ca,foivos@ualberta.ca}
\thanks{The second author was supported by NSERC}
\keywords{Unbounded order convergence, Positive Schur property, Abstract martingales, Doob's convergence theorems}
\subjclass[2010]{Primary: 06F30. Secondary: 60G42, 60G48.}

\begin{document}
\begin{abstract}
A net $(x_\alpha)_{\alpha\in \Gamma}$ in a vector lattice $X$ is unbounded order convergent (uo-convergent) to $x$ if $|x_\alpha-x| \wedge  y \xrightarrow{{o}} 0$ for each $y \in X_+$, and is unbounded order Cauchy (uo-Cauchy) if the net $(x_\alpha-x_{\alpha'})_{\Gamma\times \Gamma}$ is uo-convergent to $0$.
In the first part of this article, we study uo-convergent and uo-Cauchy nets in Banach lattices and
use them to characterize Banach lattices with the positive Schur property and KB-spaces.
In the second part, we use the concept of uo-Cauchy sequences to extend Doob's submartingale convergence theorems to a measure-free setting. Our results imply, in particular, that every norm bounded submartingale in $L_1(\Omega;F)$ is almost surely uo-Cauchy in $F$, where $F$ is an order continuous Banach lattice with a weak unit.\end{abstract}

\maketitle

\section{Introduction}
Recall that a net $(x_\alpha)_{\alpha\in\Gamma}$ in a vector lattice $X$ is \term{order convergent} (or, o-convergent for short) to $x\in X$ if there exists another net $(z_\beta)_{\beta\in \Lambda}$ satisfying: (1) $z_\beta\downarrow 0$; (2) for any $\beta\in\Lambda$, there exists $\alpha_0\in \Gamma$ such that $\abs{x_\alpha-x}\leq z_\beta$ for all $\alpha\geq \alpha_0$. In this case, we write $x_\alpha\xrightarrow{o}x$. In \cite{dem64,kap,nak}, the following notion was introduced and studied. In a vector lattice $X$, a net $(x_\alpha)$ is \term{unbounded order convergent} (or, uo-convergent for short) to $x\in X$ if $\abs{x_\alpha-x}\wedge y\xrightarrow{o}0$ for all $y\in X_+$. In this case, we write $x_\alpha\xrightarrow{uo}x$.
This is an analogue of pointwise convergence in function spaces. Indeed, it is easily seen that, in $c_0$ or $\ell_p (1\leq p\leq \infty)$, uo-convergence of nets is the same as coordinate-wise convergence, and that, in $L_p(\mu) (1\leq p<\infty)$, uo-convergence of sequences is the same as almost everywhere convergence.\par

Of particular interest is to study geometric properties of Banach lattices by means of uo-convergence. In \cite{wi77},  Wickstead characterized the spaces in which weak convergence of nets implies uo-convergence and vice versa. Motivated by this, we study nets which have weak and uo-convergence properties simultaneously. In Section 3, we prove that, in an order continuous Banach lattice, a relatively weakly compact uo-convergent net is absolutely weakly convergent (Proposition~\ref{abs-con}). We also characterize the spaces in which every relatively weakly compact uo-convergent net is norm convergent to be the Banach lattices with the positive Schur property (Theorem ~\ref{psp}). Moreover, we use uo-convergence as a tool to give an alternative proof of the known result that, for an order continuous Banach lattice,
the Dunford-Pettis theorem holds (i.e.~ every relatively weakly compact subset is almost order bounded) iff the space has the positive Schur property (Theorem ~\ref{spsp}).
In Section 4, we introduce the notion of uo-Cauchy nets which arises naturally from the study of martingales without probability in Section 5. An important question is to search for conditions under which a uo-Cauchy net is uo-convergent. We prove that, in an order continuous Banach lattice, every relative weakly compact uo-Cauchy net is uo-convergent (Theorem~\ref{wcuc}). We also prove that, in KB-spaces, every norm bounded uo-Cauchy net is uo-convergent. This property actually characterizes KB-spaces among order continuous Banach lattices (Theorem ~\ref{cha}).\par

The second part of our study, Section 5, is devoted to an application of uo-convergence to the theory of martingales without probability. In this theory, martingales are defined in vector and Banach lattices, with respect to a sequence of positive projections that play the role of filtration. To our knowledge, this theory was initiated by DeMarr~\cite{dem66}, in which it is proved that, under certain natural conditions on the filtration, order bounded submartingales are order convergent. However, as stated in \cite{dem66}, their main theorem does not include Doob's almost sure convergence theorem as a special case. Since then, other efforts have been made to this problem. See, for example, \cite{tro11,koro08,kuo04,kuo05,kuo06,sto,tro05,uhl71}.
In particular, in \cite{kuo05}  a new type of natural conditions on the filtration was introduced. It is under these conditions that we establish a version of Doob's almost sure convergence theorem for vector lattices in terms of uo-Cauchy sequences (Theorem ~\ref{pcon1}). In particular, we prove that, if the space is a KB-space then every normed bounded submartingale is uo-convergent (Proposition ~\ref{nconf}). These results include the classical Doob's theorem as a special case. In addition, we establish some results on norm convergence of submartingales in Banach lattices (Theorems~\ref{partial} and~\ref{acon1}). It deserves mentioning that our results apply to Banach lattice valued submartingales. Precisely, we prove that every norm bounded submartingale in $L_1(\Omega;F)$ with respect to classical filtrations is almost surely uo-Cauchy in $F$, where $F$ is an order continuous Banach lattice with a weak unit (Corollary ~\ref{vmart}) .

\section{Preliminaries}

\subsection{Some basic definitions}We adopt \cite{abr,ali,mn1991} as standard references for basic notions on Banach spaces, Banach lattices and vector lattices. Recall that a vector lattice is said to be \term{order complete} (respectively, \term{$\sigma$-order complete}) if every order bounded above subset (respectively, countable subset) has a supremum.
It is easily seen that an order bounded net $(x_\alpha)$ in an order complete vector lattice $X$ order converges to $x\in X$ if and only if $\inf_\alpha \sup_{\beta\geq \alpha}\abs{x_\beta-x}=0$.
Recall also that in a vector lattice, a positive vector $x_0>0$ is called a \term{weak unit} if $x\wedge nx_0\uparrow x $ for all $x\geq 0$, and that in a normed lattice, a positive vector $x_0>0$ is called a \term{quasi-interior point} if $x\wedge nx_0\rightarrow x$ in norm for all $x\geq 0$. Quasi-interior points are weak units. A positive functional on a vector lattice is said to be \term{strictly positive} if it does not vanish on non-zero positive vectors, and is said to be \term{order continuous} if it maps order null nets to order null nets.\par

A Banach lattice is said to be \term{order continuous} or \term{have order continuous norm} if every order null net is norm null, is said to be a \term{KB}-space if every norm bounded increasing sequence converges in norm, and is said to be an \term{AL}-space if $\norm{x+y}=\norm{x}+\norm{y}$ for all $x,y\geq0$. It is well-known that KB-spaces are order continuous and that AL-spaces are KB-spaces. We also need the following facts. In order continuous spaces, weak units are quasi-interior points. In KB-spaces, norm bounded increasing nets are norm convergent. Moreover, a Banach lattice is a KB-space iff it contains no lattice copies of $c_0$ iff it is a band of its second dual (cf.~\cite[Theorem~4.60]{ali}). Kakutani's representation theorem (cf.~\cite[Theorem~4.27]{ali}) says that every AL-space is lattice isometric to $L_1(\mu)$ for some measure $\mu$. If, in addition, the AL-space has a weak unit $x_0$, then $\mu$ can be chosen finite with $x_0$ corresponding to the constant one function (cf.~the proof of \cite[Theorem~4.27]{ali}).\par

In a Banach lattice $X$, a subset $A$ is said to be \term{almost order bounded} if for any $\varepsilon>0$ there exists $u\in X_+$ such that $A\subset [-u,u]+\varepsilon B_X$. One should observe the following useful fact, which can be easily verified using Riesz decomposition theorem, that $A\subset [-u,u]+\varepsilon B_X$ iff
$\sup_{x\in A}\bignorm{(\abs{x}-u)^+}= \sup_{x\in A}\bignorm{\abs{x}-u\wedge \abs{x}}\leq \varepsilon$. Relatively compact subsets (in particular, norm convergent sequences) are easily seen to be almost order bounded. Almost order bounded subsets in order continuous Banach lattices are relatively weakly compact (cf.~\cite[Theorem~4.9(5) and Theorem 3.44]{ali}). The classical Dunford-Pettis theorem asserts that in $L_1(\mu)$, a subset is relatively weakly compact iff it is almost order bounded (apply e.g.~\cite[Theorem~4.37]{ali} with the functional being integration), iff it is norm bounded and uniformly integrable when $\mu$ is finite (cf.~\cite[Theorem~5.2.9]{kalton}).

\subsection{A representation theorem}\label{AL-rep} A fundamental tool used in this paper is the following standard representation theorem of vector lattices. We refer the reader to \cite[Proposition~2.4.16]{mn1991} for specific details.\par

\emph{Throughout this subsection, let $X$ stand for an order complete vector lattice with a strictly positive order continuous functional $x_0^*>0$}.
It is clear that the norm completion $\widetilde{X}$ of $X$ with respect to the norm
$$\bignorm{x}_L=x_0^*(|x|),\qquad \forall~x\in X.$$
is an AL-space. By \cite[Proposition~2.4.16]{mn1991}, $X$ is an ideal of $\widetilde{X}$. It is easily seen that $X$ is norm dense, and thus is order dense, in $\widetilde{X}$.

\begin{lemma}\label{representation}Let $x_0$ be a weak unit of $X$. Then $\widetilde{X}$ is lattice isometric to $L_1(\mu)$ for some finite measure $\mu$ with $x_0$ corresponding to the constant $1$ function.
\end{lemma}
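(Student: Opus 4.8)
The plan is to deduce the lemma from the refined form of Kakutani's representation theorem recalled in Section~2: an AL-space that possesses a weak unit $x_0$ is lattice isometric to $L_1(\mu)$ for some \emph{finite} measure $\mu$, with $x_0$ corresponding to the constant function $1$. Since $\widetilde X$ is already known to be an AL-space, the entire task reduces to verifying that $x_0$, which is a weak unit of $X$ by hypothesis, remains a weak unit of the (generally larger) space $\widetilde X$.

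To see this I would show that the ideal of $\widetilde X$ generated by $x_0$ is $\norm{\cdot}_L$-dense; this is exactly the (standard) characterization of $x_0$ being a quasi-interior point of $\widetilde X$, and quasi-interior points are weak units. For the density, fix $x\in X_+$. Since $x_0$ is a weak unit of $X$ we have $x\wedge nx_0\uparrow x$ in $X$, i.e. $x-x\wedge nx_0\downarrow 0$; as $x_0^*$ is order continuous, $\norm{x-x\wedge nx_0}_L=x_0^*(x)-x_0^*(x\wedge nx_0)\to 0$. Each $x\wedge nx_0$ belongs to the ideal generated by $x_0$, so $x$ lies in its $\norm{\cdot}_L$-closure; writing an arbitrary element of $X$ as a difference of positive vectors shows that $X$ is contained in that closure. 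Since $X$ is $\norm{\cdot}_L$-dense in $\widetilde X$ (noted in the setup), the ideal generated by $x_0$ is dense in $\widetilde X$. Hence $x_0$ is a weak unit of $\widetilde X$, and Kakutani's theorem finishes the proof.

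An alternative route, not passing through quasi-interior points, is to check directly that $\{x_0\}^{\perp}=\{0\}$ in $\widetilde X$ and invoke order completeness of $\widetilde X$. Indeed, if $0\le y\in\widetilde X$ satisfies $y\wedge x_0=0$ but $y\neq 0$, then order density of $X$ in $\widetilde X$ yields $x\in X$ with $0<x\le y$, so $0\le x\wedge x_0\le y\wedge x_0=0$; but a nonzero positive element of $X$ disjoint from $x_0$ cannot exist when $x_0$ is a weak unit of $X$ (from $x\wedge x_0=0$ one gets $x\wedge nx_0\le n(x\wedge x_0)=0$, forcing $x=\sup_n(x\wedge nx_0)=0$). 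Thus $\{x_0\}^{\perp}=\{0\}$; since $\widetilde X$, being an AL-space, is order complete, this is equivalent to $x_0$ being a weak unit of $\widetilde X$.

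None of the steps is a genuine obstacle here; the only care needed is in the bookkeeping between the ``$x\wedge nx_0\uparrow x$'' definition of a weak unit and its density/disjointness reformulations, together with the repeated use --- all recorded in the paragraph preceding the lemma --- that $X$ is an ideal of $\widetilde X$ (so lattice operations and disjointness computed in $X$ agree with those in $\widetilde X$) and is both order dense and norm dense in $\widetilde X$.
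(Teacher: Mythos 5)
Your proposal is correct and follows the same route as the paper: reduce the statement to showing that $x_0$ remains a weak unit of $\widetilde X$ and then invoke the refined Kakutani representation. Your second argument (that $\{x_0\}^{\perp}=\{0\}$ in $\widetilde X$ via order density of $X$) is essentially the paper's one-line justification that the band generated by $x_0$ in $\widetilde X$ contains $X$ and hence is all of $\widetilde X$, just written out in more detail.
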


\begin{proof}In view of Kakutani's representation theorem, it suffices to show that $x_0$ is also a weak unit of $\widetilde{X}$. Indeed, the band generated by $x_0$ in $\widetilde{X}$ clearly contains $X$, and hence is equal to $\widetilde{X}$.
\end{proof}

It is critical for us to know how uo-convergence in $X$ passes to $\widetilde{X}$ and also how weak convergence passes when $X$ is a normed lattice. The following is immediate by Lemma~\ref{uo-ideal} (see below).

\begin{lemma}\label{uo-equiv}For $(x_\alpha)\subset X$ and $x\in X$, $x_\alpha\xrightarrow{uo}x$ in $X$ iff
$x_\alpha\xrightarrow{uo}x$ in $\widetilde{X}$.
\end{lemma}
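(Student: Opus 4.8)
The plan is to reduce to the zero limit and then invoke the general principle — recorded below as Lemma~\ref{uo-ideal} — that uo-null nets are preserved in both directions between a vector lattice and an order-dense ideal of it; here $X$ is an order-dense ideal of the AL-space $\widetilde{X}$. Since uo-convergence is translation invariant and $x_\alpha-x\in X$, it suffices to treat $x=0$, i.e.\ to show that $\abs{x_\alpha}\wedge y\xrightarrow{o}0$ in $X$ for every $y\in X_+$ if and only if $\abs{x_\alpha}\wedge w\xrightarrow{o}0$ in $\widetilde{X}$ for every $w\in\widetilde{X}_+$. I would prove the two implications separately.

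For the implication from $\widetilde{X}$ to $X$ (the routine one), fix $y\in X_+$. The net $u_\alpha:=\abs{x_\alpha}\wedge y$ lies in $X$, being dominated by $\abs{x_\alpha}\in X$, and satisfies $0\le u_\alpha\le y$. By hypothesis $u_\alpha\xrightarrow{o}0$ in $\widetilde{X}$, so $u_\alpha\le w_\gamma$ eventually for some net $w_\gamma\downarrow0$ in $\widetilde{X}$. Then $y\wedge w_\gamma$ is a decreasing net lying inside $X$, it still dominates the tails of $(u_\alpha)$, and since $y\wedge w_\gamma\downarrow0$ in $\widetilde{X}$ while every lower bound of it in $X$ is also a lower bound in $\widetilde{X}$, we get $y\wedge w_\gamma\downarrow0$ in $X$ as well; hence $u_\alpha\xrightarrow{o}0$ in $X$. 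This step uses only that $X$ is a sublattice of $\widetilde{X}$.

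The implication from $X$ to $\widetilde{X}$ is the heart of the matter, and the obstacle is that the suprema governing order convergence in $\widetilde{X}$ are a priori not controlled by the order data of $(x_\alpha)$ in $X$. I would overcome this by approximating test vectors from below, using order density together with the AL-structure. Fix $w\in\widetilde{X}_+$. The set $D=\{u\in X_+:u\le w\}$ is upward directed with $\sup D=w$, so $\abs{x_\alpha}\wedge w=\sup_{u\in D}(\abs{x_\alpha}\wedge u)$ for each $\alpha$ by the infinite distributive law, and $\norm{w-u}_L\to0$ along $D$ because $\widetilde{X}$ has order continuous norm. Put $z_\alpha:=\abs{x_\alpha}\wedge w$ and, using Dedekind completeness of the AL-space $\widetilde{X}$, $s_\alpha:=\sup_{\beta\ge\alpha}z_\beta$, a decreasing net; by order continuity of $\norm{\cdot}_L$ it then suffices to show $\norm{s_\alpha}_L\to0$. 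Given $\varepsilon>0$, choose $u\in D$ with $\norm{w-u}_L<\varepsilon$. Since $\abs{x_\beta}\wedge u\xrightarrow{o}0$ in $X$, hence in $\widetilde{X}$ by the previous step, we have $\sup_{\beta\ge\alpha}(\abs{x_\beta}\wedge u)\downarrow0$ and so $\norm{\sup_{\beta\ge\alpha}(\abs{x_\beta}\wedge u)}_L\to0$; moreover $s_\alpha-\sup_{\beta\ge\alpha}(\abs{x_\beta}\wedge u)\le\sup_{\beta\ge\alpha}\bigl((\abs{x_\beta}\wedge w)-(\abs{x_\beta}\wedge u)\bigr)\le w-u$, whence $\norm{s_\alpha}_L\le\norm{\sup_{\beta\ge\alpha}(\abs{x_\beta}\wedge u)}_L+\varepsilon$. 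Letting $\alpha$ increase and then $\varepsilon\to0$ yields $\norm{s_\alpha}_L\to0$, so $s_\alpha\downarrow0$ and $z_\alpha\xrightarrow{o}0$ in $\widetilde{X}$, i.e.\ $x_\alpha\xrightarrow{uo}0$ in $\widetilde{X}$. Thus the entire difficulty is concentrated in this last direction and is resolved by the $L$-norm approximation $u\uparrow w$.
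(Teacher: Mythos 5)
Your proof is correct, but it takes a genuinely different route from the paper's. The paper obtains Lemma~\ref{uo-equiv} as an immediate corollary of the general Lemma~\ref{uo-ideal} (uo-null nets pass in both directions between an order complete vector lattice and any ideal of it), whose proof is purely order-theoretic: it reduces test vectors to $(I\oplus I^{\rm d})_+$ and uses $(I\oplus I^{\rm d})^{\rm d}=\{0\}$. You instead prove the hard direction directly by exploiting the AL-structure of $\widetilde{X}$: you approximate a test vector $w\in\widetilde{X}_+$ from below by $u\in X_+$ with $\norm{w-u}_L$ small, control $\sup_{\beta\ge\alpha}(\abs{x_\beta}\wedge w)$ by $\sup_{\beta\ge\alpha}(\abs{x_\beta}\wedge u)+(w-u)$ via the Birkhoff inequality, and conclude from order continuity of the $L$-norm. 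This is self-contained and arguably more concrete, but it is tied to the normed setting, whereas the paper's Lemma~\ref{uo-ideal} is needed elsewhere anyway and yields the statement with no extra work. Your easy direction (replacing the dominating net $w_\gamma$ by $y\wedge w_\gamma\subset X$) is exactly the standard ideal argument. One small slip in wording: when you pass $\abs{x_\beta}\wedge u\xrightarrow{o}0$ from $X$ up to $\widetilde{X}$ you cite ``the previous step,'' but that step went in the opposite direction; what you actually need is the (easy, and true) fact that a dominating net decreasing to $0$ in an ideal also decreases to $0$ in the ambient lattice, since any positive lower bound lies in the ideal. This is the remark the paper records just before Lemma~\ref{band-proj}, and it does not affect the validity of your argument.
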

It follows that, for a sequence $(x_n)\subset X$ and $x\in X$, $x_n\xrightarrow{uo}x$ in $X$ iff $x_n\xrightarrow{uo}x$ in $\widetilde{X}$ iff $x_n\rightarrow x$ almost everywhere in $\widetilde{X}$.\par

If $X$ is normed and $x_0^*$ is norm continuous then $X$ embeds continuously into $\widetilde{X}$. Thus, the following is immediate.

\begin{lemma}\label{al-wcon}Let $X$ be a normed lattice and $x_0^*$ be norm continuous.
\begin{enumerate}
\item For $(x_\alpha)\subset X$ and $x\in X$, if $x_\alpha\xrightarrow{w}x$ in $X$, then
$x_\alpha\xrightarrow{w}x$ in $\widetilde{X}$.
\item If $A\subset X$ is weakly compact in $X$, then $A$ is also weakly compact in $\widetilde{X}$.
\end{enumerate}
\end{lemma}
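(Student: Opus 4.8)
The plan is to make the continuous embedding $j\colon X\hookrightarrow\widetilde X$ explicit and then deduce both statements from the standard fact that bounded linear operators are continuous for the weak topologies. First I would check that $j$ is bounded: as the lattice norm on $X$ is monotone we have $\norm{\,\abs{x}\,}=\norm{x}$, whence
$$\norm{x}_L=x_0^*(\abs{x})\le\norm{x_0^*}\,\norm{\,\abs{x}\,}=\norm{x_0^*}\,\norm{x},\qquad x\in X,$$
and $j$ is injective because $x_0^*$ is strictly positive, so $\norm{x}_L=0$ forces $x=0$. Thus $j$ is a bounded, injective linear map which is the identity on the underlying vectors; in particular $j$ is continuous from $(X,\sigma(X,X^{*}))$ to $(\widetilde X,\sigma(\widetilde X,\widetilde X^{*}))$.

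For part~(1), given $\varphi\in\widetilde X^{*}$ the composition $\varphi\circ j$ belongs to $X^{*}$, so $x_\alpha\xrightarrow{w}x$ in $X$ gives $\varphi(x_\alpha)=(\varphi\circ j)(x_\alpha)\to(\varphi\circ j)(x)=\varphi(x)$; since $\varphi$ is arbitrary, $x_\alpha\xrightarrow{w}x$ in $\widetilde X$. For part~(2), the weak-to-weak continuity of $j$ shows that $j(A)=A$, being the continuous image of the weakly compact set $A$, is weakly compact in $\widetilde X$.

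There is no real obstacle here; the only step needing a moment's attention is the boundedness of $j$, which combines monotonicity of the lattice norm with the norm-continuity and strict positivity of $x_0^*$. Everything else is the routine persistence of weak convergence and weak compactness under bounded operators, which is exactly why the statement can be called immediate.
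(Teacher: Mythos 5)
Your argument is correct and is exactly the route the paper takes: the paper notes that norm continuity of $x_0^*$ makes the inclusion $X\hookrightarrow\widetilde X$ a bounded operator and then declares the lemma immediate, which is precisely the weak-to-weak continuity you spell out. Your explicit verification of the bound $\norm{x}_L\le\norm{x_0^*}\,\norm{x}$ and the composition $\varphi\circ j$ just fills in the details the paper leaves to the reader.
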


We end this section with a simple result on extensions of positive operators.

\begin{lemma}\label{ope} For any positive operator $T$ on $X$, if $T^*x_0^*=x_0^*$, then $T$ extends uniquely to a contractive positive operator on $\widetilde{X}$.\end{lemma}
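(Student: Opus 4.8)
The plan is to verify that $T$ is contractive for the $L$-norm on $X$, extend it by uniform continuity and density, and then recover positivity of the extension via a closure argument.

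First I would use positivity of $T$, which gives $\abs{Tx}\le T\abs{x}$ for every $x\in X$, together with the hypothesis $T^*x_0^*=x_0^*$, to compute
\[
\norm{Tx}_L=x_0^*(\abs{Tx})\le x_0^*(T\abs{x})=(T^*x_0^*)(\abs{x})=x_0^*(\abs{x})=\norm{x}_L,
\]
so that $T\colon(X,\norm{\cdot}_L)\to(X,\norm{\cdot}_L)$ is a linear contraction. Since $X$ is norm dense in the Banach space $\widetilde X$, the standard extension-by-uniform-continuity argument then yields a bounded linear operator $\widetilde T$ on $\widetilde X$ extending $T$; it is again a contraction, and it is the unique continuous extension, by density of $X$.

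It remains to check $\widetilde T\ge 0$. The key observation is that $X_+$ is dense in the positive cone $\widetilde X_+$: given $y\in\widetilde X_+$, pick $x_n\in X$ with $\norm{x_n-y}_L\to0$; since $X$ is an ideal, hence a sublattice, of $\widetilde X$, we have $x_n^+\in X_+$ and $\norm{x_n^+-y}_L=\norm{x_n^+-y^+}_L\le\norm{x_n-y}_L\to0$. Now for arbitrary $y\in\widetilde X_+$ choose such $x_n\in X_+$ with $x_n\to y$ in $\widetilde X$; then $\widetilde Ty=\lim_n Tx_n$ with each $Tx_n\in X_+\subset\widetilde X_+$, and since $\widetilde X_+$ is norm closed we get $\widetilde Ty\in\widetilde X_+$. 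Hence $\widetilde T$ is the desired (unique) contractive positive extension.

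There is no genuine obstacle in this argument; the only point worth flagging is that positivity of $\widetilde T$ is not inherited directly from positivity of $T$, but is obtained from the density of $X_+$ in the cone $\widetilde X_+$ combined with the norm-closedness of that cone.
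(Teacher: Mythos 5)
Your proof is correct and follows the same route as the paper: the key step in both is the contraction estimate $\norm{Tx}_L=x_0^*(\abs{Tx})\le x_0^*(T\abs{x})=x_0^*(\abs{x})=\norm{x}_L$, after which the extension is obtained by density. The paper leaves the extension and its positivity implicit, whereas you correctly fill in the standard details (density of $X_+$ in $\widetilde X_+$ and closedness of the positive cone).
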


\begin{proof}
Indeed, for any $x\in X$, $\bignorm{Tx}_L=x_0^*(|Tx|)\leq x_0^*(T|x|)=x_0^*(|x|)=\bignorm{x}_L$. Hence, $T$ extends uniquely to a contractive positive operator $\widetilde{X}$.
\end{proof}

\section{Unbounded Order Convergence}\label{uo-convergence}
\subsection{Some basic properties}Note that the uo-limit is unique whenever it exists. Note also that, for order bounded nets, uo-convergence is equivalent to o-convergence. We now provide some basic properties of uo-convergent nets that will be used in the sequel. We include the proofs of the first two lemmas for the sake of completeness.

\begin{lemma}[\cite{kap}]\label{comparison}\begin{enumerate}
\item\label{comp0} Suppose $x_\alpha\xrightarrow{uo} x$ and $y_\alpha\xrightarrow{uo} y$. Then $a x_\alpha+b y_\alpha\xrightarrow{uo}a x +by$ for any $a,b\in\mathbb{R}$.
\item\label{comp1} $x_\alpha\xrightarrow{uo}x$ iff $x_\alpha^\pm\xrightarrow{uo}x^\pm$. In this case, $\abs{x_\alpha}\xrightarrow{uo}\abs{x}$.
\item\label{comp2} Suppose $0\leq x_\alpha\xrightarrow{uo} x$ and $x_\alpha\leq y_\alpha\xrightarrow{uo} y$. Then $0\leq x\leq y$.
\end{enumerate}
\end{lemma}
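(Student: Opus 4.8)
The plan is to prove the three items in the order (1), (2), (3), with (1) carrying the real content and (2), (3) following quickly from it together with two elementary Riesz-space facts: the pointwise inequality $(p+q)\wedge u\le (p\wedge u)+(q\wedge u)$ for $p,q,u\in X_+$, and Birkhoff's inequality $\abs{a^+-b^+}\le\abs{a-b}$ (hence also $\bigabs{\abs{a}-\abs{b}}\le\abs{a-b}$). I will also use freely the elementary facts that a positive scalar multiple of an order-null net is order-null, that the sum of two order-null nets is order-null, and that a net squeezed between $0$ and an order-null net is order-null; all of these are immediate from the definition of o-convergence.

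For (1), it suffices to treat scaling and addition separately, since $ax_\alpha+by_\alpha-ax-by=a(x_\alpha-x)+b(y_\alpha-y)$. For scaling by a scalar $c\neq 0$ and any $u\in X_+$, I would rewrite $\abs{cx_\alpha-cx}\wedge u=\abs{c}\bigl(\abs{x_\alpha-x}\wedge \frac{u}{\abs{c}}\bigr)$, which is order-null because $\frac{u}{\abs{c}}\in X_+$ and positive scalar multiples of order-null nets are order-null; the case $c=0$ is trivial. For addition, from the triangle inequality $\abs{x_\alpha+y_\alpha-x-y}\le\abs{x_\alpha-x}+\abs{y_\alpha-y}$ and the inequality above I get, for each $u\in X_+$,
$$0\le\abs{x_\alpha+y_\alpha-x-y}\wedge u\le \bigl(\abs{x_\alpha-x}\wedge u\bigr)+\bigl(\abs{y_\alpha-y}\wedge u\bigr),$$
whose right-hand side is order-null; the squeeze then gives $\abs{x_\alpha+y_\alpha-x-y}\wedge u\xrightarrow{o}0$.

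For (2), if $x_\alpha\xrightarrow{uo}x$ then for each $u\in X_+$ Birkhoff's inequality gives $\abs{x_\alpha^\pm-x^\pm}\wedge u\le\abs{x_\alpha-x}\wedge u\xrightarrow{o}0$, so $x_\alpha^\pm\xrightarrow{uo}x^\pm$, and similarly $\bigabs{\abs{x_\alpha}-\abs{x}}\wedge u\le\abs{x_\alpha-x}\wedge u\xrightarrow{o}0$ gives $\abs{x_\alpha}\xrightarrow{uo}\abs{x}$; conversely, if $x_\alpha^\pm\xrightarrow{uo}x^\pm$ then $x_\alpha=x_\alpha^+-x_\alpha^-\xrightarrow{uo}x^+-x^-=x$ by (1). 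For (3), I first observe that $x_\alpha\ge 0$ forces $x_\alpha^-=0$, so by (2) the (constant) net $0=x_\alpha^-\xrightarrow{uo}x^-$; since $0\xrightarrow{uo}0$ and uo-limits are unique, $x^-=0$, i.e.\ $x\ge 0$. Applying this same argument to the positive net $y_\alpha-x_\alpha\ge 0$, which by (1) uo-converges to $y-x$, yields $y-x\ge 0$, i.e.\ $x\le y$.

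I do not expect any genuine obstacle here; the whole argument is bookkeeping with standard lattice inequalities and the already-noted uniqueness of uo-limits. The only points that need a little care are phrasing the scaling step so that it covers all nonzero scalars (by rescaling $u$ rather than the net) and making sure to prove (2) before using it in (3).
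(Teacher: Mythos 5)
Your proposal is correct and follows essentially the same route as the paper: part (1) by the standard lattice inequalities $(p+q)\wedge u\le p\wedge u+q\wedge u$ and rescaling of $u$ (the paper simply calls this trivial), part (2) by Birkhoff's inequality one way and the decomposition $x_\alpha=x_\alpha^+-x_\alpha^-$ together with (1) the other way, and part (3) by uniqueness of uo-limits applied first to $(x_\alpha)$ and then to $(y_\alpha-x_\alpha)$. The only cosmetic difference is that the paper deduces $x\ge 0$ from $x=\abs{x}$ rather than from $x^-=0$.
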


\begin{proof}\eqref{comp0} is trivial. \eqref{comp1} follows from $\abs{x^\pm-y^\pm}\wedge \abs{z}\leq \abs{x-y}\wedge \abs{z}\leq \abs{x^+-y^+}\wedge\abs{z}+\abs{x^--y^-}\wedge \abs{z}$ and $\bigabs{\abs{x}-\abs{y}}\leq \abs{x-y}$. For \eqref{comp2}, note that $x_\alpha=\abs{x_\alpha}\xrightarrow{uo}\abs{x}$ by \eqref{comp1}. Thus, $x=\abs{x}\geq 0$, by uniqueness of uo-limits. Since $0\leq y_\alpha-x_\alpha\xrightarrow{uo}y-x$, we have $y\geq x$.
\end{proof}

\begin{lemma}[\cite{kap}]\label{uoc}Let $X$ be an order complete vector lattice with a weak unit $x_0$. Then $x_\alpha\xrightarrow{uo}0$ iff $|x_\alpha|\wedge x_0\xrightarrow{o}0$.\end{lemma}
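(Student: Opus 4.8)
The forward implication is immediate: apply the definition of uo-convergence with $y=x_0$. For the converse, assume $\abs{x_\alpha}\wedge x_0\xrightarrow{o}0$; the goal is to show $\abs{x_\alpha}\wedge y\xrightarrow{o}0$ for every $y\in X_+$. The plan is to dominate $\abs{x_\alpha}\wedge y$ by a multiple of $\abs{x_\alpha}\wedge x_0$ plus an ``error'' term that is a constant net tending to $0$ because $x_0$ is a weak unit, and then run the order-limit-superior criterion recalled in the preliminaries.

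First I would record two elementary lattice inequalities valid for $a,b,c\in X_+$ and $n\in\mathbb{N}$, namely $a\wedge b\leq a\wedge c+(b-c)^+$ and $a\wedge(nc)\leq n(a\wedge c)$; both follow from a routine case analysis (equivalently from the Riesz/Birkhoff inequalities). Taking $a=\abs{x_\alpha}$, $b=y$, $c=x_0$ and combining them gives
$$0\leq \abs{x_\alpha}\wedge y\leq n\,(\abs{x_\alpha}\wedge x_0)+(y-nx_0)^+\qquad(n\in\mathbb{N}).$$

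Next, since $X$ is order complete and $\abs{x_\alpha}\wedge y$ is order bounded by $y$, the criterion from Section~2 says that $\abs{x_\alpha}\wedge y\xrightarrow{o}0$ is equivalent to $\inf_\alpha\sup_{\beta\geq\alpha}(\abs{x_\beta}\wedge y)=0$. Fix $n$. Applying $\sup_{\beta\geq\alpha}$ and then $\inf_\alpha$ to the displayed inequality, and using that $\inf_\alpha\sup_{\beta\geq\alpha}(\abs{x_\beta}\wedge x_0)=0$ (this is precisely the hypothesis, since $\abs{x_\alpha}\wedge x_0$ is order bounded by $x_0$) together with the fact that $(y-nx_0)^+$ does not depend on $\alpha$, I obtain $\inf_\alpha\sup_{\beta\geq\alpha}(\abs{x_\beta}\wedge y)\leq(y-nx_0)^+$ for all $n$. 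Finally, because $x_0$ is a weak unit, $y\wedge nx_0\uparrow y$, so $(y-nx_0)^+=y-y\wedge nx_0\downarrow 0$; hence the left-hand side, lying below every term of a net decreasing to $0$, must be $0$. Therefore $\abs{x_\alpha}\wedge y\xrightarrow{o}0$, completing the proof.

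I do not anticipate a serious obstacle: the only points needing a little care are verifying the two scalar-free lattice inequalities and the interchange of $\sup$ and $\inf$, for which only the trivial bound $\sup(u+v)\leq\sup u+\sup v$ and monotonicity of $\inf$ are used, all of which are routine once order completeness is available.
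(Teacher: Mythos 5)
Your proof is correct, but it takes a genuinely different route from the paper's. The paper's argument is a two-line computation: setting $u=\inf_\alpha\sup_{\beta\geq\alpha}(\abs{x_\beta}\wedge y)$, it invokes the infinite distributive law of the (order complete) vector lattice to get $u\wedge x_0=\bigl(\inf_\alpha\sup_{\beta\geq\alpha}(\abs{x_\beta}\wedge x_0)\bigr)\wedge y=0$, and then concludes $u=0$ because a weak unit has trivial disjoint complement. You instead dominate $\abs{x_\alpha}\wedge y$ by $n(\abs{x_\alpha}\wedge x_0)+(y-nx_0)^+$ and send $n\to\infty$ only at the very end. Your two auxiliary inequalities do check out: from $b=b\wedge c+(b-c)^+$ and $a\wedge(u+v)\leq a\wedge u+a\wedge v\leq a\wedge u+v$ for positive elements one gets $a\wedge b\leq a\wedge c+(b-c)^+$, and $a\wedge(nc)\leq(na)\wedge(nc)=n(a\wedge c)$; the subsequent passage to $\sup_{\beta\geq\alpha}$ and $\inf_\alpha$ uses only translation-invariance and homogeneity of suprema, and the nets involved are order bounded (by $y$ and $x_0$ respectively), so the iterated sup/inf criterion from Section~2 applies exactly as you say. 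What each approach buys: the paper's is shorter once one accepts infinite distributivity of $\wedge$ over $\inf$ and $\sup$; yours avoids that tool entirely at the cost of a quantitative decomposition, and both ultimately use the weak-unit hypothesis in equivalent forms ($y\wedge nx_0\uparrow y$ in yours versus ``$u\wedge x_0=0$ implies $u=0$'' in the paper's). No gap.
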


\begin{proof}The ``only if'' part is obvious. We prove the ``if'' part. Pick any $y\geq 0$. Since $X$ is order complete, we have that
$$\left(\inf_\alpha \sup_{\beta\geq \alpha}\left(\abs{x_\beta}\wedge y\right)\right)\wedge x_0=\left(\inf_\alpha \sup_{\beta\geq \alpha} \left(\abs{x_\beta}\wedge x_0\right)\right)\wedge y=0\wedge y=0.$$
Thus, $x_0$ being a weak unit implies $\inf_\alpha \sup_{\beta\geq \alpha}\left(\abs{x_\beta}\wedge y\right)=0$, and $\abs{x_\alpha}\wedge y\xrightarrow{o}0$.
\end{proof}

For the next two lemmas, observe the following fact. Let $X$ be a vector lattice, $I$ an ideal of $X$ and $(x_\alpha)\subset I$. If $x_\alpha\xrightarrow{o}0$ in $I$, then $x_\alpha\xrightarrow{o}0$ in $X$. Conversely, if $(x_\alpha)$ is order bounded in $I$ and $x_\alpha\xrightarrow{o}0$ in $X$, then $x_\alpha\xrightarrow{o}0$ in $I$.

\begin{lemma}\label{band-proj}
Let $B$ be a projection band and $P$ the corresponding band projection. If $x_\alpha\xrightarrow{uo}x$ in $X$ then $Px_\alpha\xrightarrow{uo}Px$ in both $X$ and $B$.
\end{lemma}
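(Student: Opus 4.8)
The plan is to reduce immediately to the case $x=0$. Since $Px_\alpha-Px=P(x_\alpha-x)$ and $x_\alpha-x\xrightarrow{uo}0$ by Lemma~\ref{comparison}\eqref{comp0}, it suffices to show that $x_\alpha\xrightarrow{uo}0$ in $X$ implies $Px_\alpha\xrightarrow{uo}0$ in both $X$ and $B$.

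Next I would record the two properties of the band projection $P$ that do all the work: $P$ is a positive operator, so $0\le P\abs{z}\le\abs{z}$ for every $z\in X$; and $P$ commutes with the modulus, so $\abs{Px_\alpha}=P\abs{x_\alpha}$. (For the latter, write $z=Pz+(I-P)z$ as the sum of its disjoint components in $B$ and $B^d$, so that $\abs{z}=\abs{Pz}+\abs{(I-P)z}$ with $\abs{Pz}\in B$ and $\abs{(I-P)z}\in B^d$, and apply $P$.) Combining these, for any $y\in X_+$ we obtain
\[
0\le\abs{Px_\alpha}\wedge y=\bigl(P\abs{x_\alpha}\bigr)\wedge y\le\abs{x_\alpha}\wedge y\xrightarrow{o}0 .
\]
Since a positive net dominated by an order null net is itself order null (the same net $\downarrow 0$ witnesses it), we conclude $\abs{Px_\alpha}\wedge y\xrightarrow{o}0$ in $X$ for every $y\in X_+$, that is, $Px_\alpha\xrightarrow{uo}0$ in $X$.

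For uo-convergence in $B$, I would use that $B$, being a projection band, is an ideal of $X$ and that $(Px_\alpha)\subset B$. Fix $y\in B_+$. Then $0\le\abs{Px_\alpha}\wedge y\le y$, so the net $\bigl(\abs{Px_\alpha}\wedge y\bigr)$ is order bounded in $B$, and by the previous paragraph it is order null in $X$; by the observation stated just before the lemma (an order bounded net in an ideal that is order null in the ambient lattice is order null in the ideal), it is order null in $B$. As $y\in B_+$ was arbitrary, $Px_\alpha\xrightarrow{uo}0$ in $B$.

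I do not expect a serious obstacle here. The only points needing care are the identity $\abs{Px_\alpha}=P\abs{x_\alpha}$, which follows from the disjointness of $B$ and $B^d$, and the transfer of order convergence between $X$ and the ideal $B$, which is exactly the remark recorded immediately before the statement; everything else is monotonicity of order convergence together with the linearity in Lemma~\ref{comparison}.
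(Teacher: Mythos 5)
Your proof is correct and follows essentially the same route as the paper: both use that the band projection $P$ is a lattice homomorphism with $0\leq P\leq I$ to get $\abs{Px_\alpha-Px}=P\abs{x_\alpha-x}\leq\abs{x_\alpha-x}$ and hence uo-convergence in $X$, and both pass to $B$ via the remark preceding the lemma on order convergence in ideals. The only difference is that you spell out the justification of $\abs{Pz}=P\abs{z}$ and the domination argument, which the paper leaves as "easily seen."
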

\begin{proof}Note that $P$ is a lattice homomorphism and $0\leq P\leq I$. Since $\abs{Px_\alpha-Px}=P\abs{x_\alpha-x}\leq \abs{x_\alpha-x}$, it is easily seen that $Px_\alpha\xrightarrow{uo}Px$ in $X$. In particular, for any $y\in B_+$, $\abs{Px_\alpha-Px}\wedge y\xrightarrow{o}0$ in $X$, and thus in $B$, by the preceding remark. Hence, $P_{x_\alpha} \xrightarrow{{uo}} Px$ in $B$.
\end{proof}

\begin{lemma}\label{uo-ideal}
Let $X$ be an order complete vector lattice and $I$ an ideal of $X$. For $(x_\alpha)\subset I$, $x_\alpha\xrightarrow{uo}0$ in $I$ iff $x_\alpha\xrightarrow{uo}0$ in $X$.
\end{lemma}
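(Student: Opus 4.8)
The plan is to prove the two implications separately; the substantial one is ``$x_\alpha\xrightarrow{uo}0$ in $I$'' $\Rightarrow$ ``$x_\alpha\xrightarrow{uo}0$ in $X$'', so I dispose of the other first. If $x_\alpha\xrightarrow{uo}0$ in $X$, fix $z\in I_+$; since $z\in X_+$ we have $\abs{x_\alpha}\wedge z\xrightarrow{o}0$ in $X$, and the net $(\abs{x_\alpha}\wedge z)$ lies in $I$ and is order bounded there by $z$, so the observation recorded just before the statement yields $\abs{x_\alpha}\wedge z\xrightarrow{o}0$ in $I$. As $z\in I_+$ was arbitrary, $x_\alpha\xrightarrow{uo}0$ in $I$.

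For the converse, assume $x_\alpha\xrightarrow{uo}0$ in $I$ and fix $y\in X_+$; I must show $\abs{x_\alpha}\wedge y\xrightarrow{o}0$ in $X$. The net $(\abs{x_\alpha}\wedge y)$ is order bounded by $y$ and $X$ is order complete, so it suffices to prove $w:=\inf_\alpha\sup_{\beta\geq\alpha}(\abs{x_\beta}\wedge y)=0$. Set $v_\gamma:=\sup_{\beta\geq\gamma}(\abs{x_\beta}\wedge y)$, so $v_\gamma\downarrow w$ and $0\leq w\leq v_\gamma\leq y$. The idea is to show that $w$ lies both in the disjoint complement $I^{d}$ and in the band $I^{dd}$ generated by $I$, whence $w\in I^{d}\cap I^{dd}=\{0\}$.

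For $w\in I^{d}$: given $z\in I_+$, the hypothesis gives $\abs{x_\alpha}\wedge z\xrightarrow{o}0$ in $I$, hence in $X$, hence --- being order bounded by $z$ in the order complete space $X$ --- $\inf_\gamma\sup_{\beta\geq\gamma}(\abs{x_\beta}\wedge z)=0$; applying the infinite distributive law $z\wedge\sup_i a_i=\sup_i(z\wedge a_i)$ and its $\inf$-analogue (valid since the relevant suprema and infima exist in $X$) one gets
\[
w\wedge z=\inf_\gamma(v_\gamma\wedge z)=\inf_\gamma\sup_{\beta\geq\gamma}(\abs{x_\beta}\wedge y\wedge z)\leq\inf_\gamma\sup_{\beta\geq\gamma}(\abs{x_\beta}\wedge z)=0,
\]
so $w\perp z$. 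For $w\in I^{dd}$: since $0\leq\abs{x_\beta}\wedge y\leq\abs{x_\beta}\in I$, each $\abs{x_\beta}\wedge y\in I$, so $v_\gamma$ is a supremum of elements of $I$; for any $u\in I^{d}$ the distributive law gives $\abs{u}\wedge v_\gamma=\sup_{\beta\geq\gamma}(\abs{u}\wedge\abs{x_\beta}\wedge y)=0$, i.e.\ $v_\gamma\in I^{dd}$, and as $0\leq w\leq v_\gamma$ and $I^{dd}$ is a band, $w\in I^{dd}$. Thus $w=0$, and since $y\in X_+$ was arbitrary, $x_\alpha\xrightarrow{uo}0$ in $X$.

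The only real obstacle is that $y$ need not belong to $I$ (an ideal need not be order dense), so the hypothesis only controls $\abs{x_\alpha}\wedge z$ for $z\in I$; the device above --- passing to the ``limit superior'' element $w$, which turns out to be squeezed inside $I^{dd}$ while being disjoint from every element of $I$ --- is precisely what bridges that gap. The only other point needing care is the repeated use of the infinite distributive law, which is legitimate here because $X$ is order complete, so all suprema and infima in question exist.
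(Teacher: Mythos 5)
Your proof is correct and follows essentially the same route as the paper: both arguments reduce to showing that the limsup element $\inf_\gamma\sup_{\beta\geq\gamma}(\abs{x_\beta}\wedge y)$ vanishes because it is disjoint from the order dense ideal $I\oplus I^{\mathrm d}$, your formulation ``$w\in I^{\mathrm d}\cap I^{\mathrm{dd}}=\{0\}$'' being exactly the identity $(I\oplus I^{\mathrm d})^{\mathrm d}=\{0\}$ that the paper invokes. The only cosmetic difference is that you verify disjointness from $I$ and from $I^{\mathrm d}$ separately (the latter via $v_\gamma$ being a supremum of elements of $I$), whereas the paper first upgrades the hypothesis to $\abs{x_\alpha}\wedge u\xrightarrow{o}0$ for all $u\in(I\oplus I^{\mathrm d})_+$ and then tests the limsup against such $u$ in one step.
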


\begin{proof}The ``if'' part. Suppose $x_\alpha\xrightarrow{uo}0$ in $X$. Then for any $y\in I_+$, $\abs{x_\alpha}\wedge y\xrightarrow{o}0$ in $X$. The preceding remark implies $\abs{x_\alpha}\wedge y\xrightarrow{o}0$ in $I$. Therefore, $x_\alpha\xrightarrow{uo}0$ in $I$.\par
The ``only if'' part. Suppose $x_\alpha\xrightarrow{uo}0$ in $I$.
Pick any $y\in I_+$. Then $\abs{x_\alpha}\wedge y\xrightarrow{o}0$ in $I$, and thus in $X$, by the preceding remark again.
It follows that, for any $y\in I_+$ and any $0\leq z\in I^{\rm d}$,
$\abs{x_\alpha}\wedge (y+z)=\abs{x_\alpha}\wedge y\xrightarrow{o}0$ in $X$.\par

Now for any $w\in X_+$ and any $u\in (I\oplus I^{\rm d})_+$, we have $w\wedge u\in (I\oplus I^{\rm d})_+$. Thus by the preceding paragraph, $\abs{x_\alpha}\wedge(w\wedge u)\xrightarrow{o}0$ in $X$, or equivalently, $$\left(\inf_\alpha\sup_{\beta\geq \alpha}\abs{x_\beta}\wedge w\right)\wedge u=\inf_\alpha\sup_{\beta\geq \alpha}\left(\abs{x_\beta}\wedge(w\wedge u)\right)=0.$$
Observe that $(I\oplus I^{\rm d})^{\rm d}=\{0\}$ by \cite[Theorem~1.36~(2) and (1)]{ali}. Therefore, we have, due to arbitrariness of $u$,
$$\inf_\alpha\sup_{\beta\geq \alpha}(\abs{x_\beta}\wedge w)
=0.$$
It follows that $\abs{x_\alpha}\wedge w\xrightarrow{o}0$ in $X$. Hence, $x_\alpha\xrightarrow{uo}0$ in $X$.
\end{proof}

\begin{remark}In Lemmas~\ref{uoc} and \ref{uo-ideal}, it suffices to assume $X$ is $\sigma$-order complete if we only consider nets with countable index sets (in particular, sequences).
\end{remark}

\begin{lemma}\label{norm}Let $X$ be an order continuous Banach lattice. Suppose $x_\alpha\xrightarrow{uo}x$. Then $\norm{x}\leq\liminf_\alpha\norm{x_\alpha}$.
\end{lemma}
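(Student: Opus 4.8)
The plan is to reduce to the case of positive nets and then control the ``defect'' $(x-x_\alpha)^+$ by a truncation that the $uo$-hypothesis handles directly. Since $\bignorm{\abs{x_\alpha}}=\norm{x_\alpha}$ and $\bignorm{\abs{x}}=\norm{x}$, and since $x_\alpha\xrightarrow{uo}x$ implies $\abs{x_\alpha}\xrightarrow{uo}\abs{x}$ by Lemma~\ref{comparison}\eqref{comp1}, it suffices to prove the inequality for the net $(\abs{x_\alpha})$ converging $uo$ to $\abs{x}\geq 0$. Hence I would assume from the outset that $x_\alpha\geq 0$ and $x\geq 0$.

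The key step is the pointwise estimate $0\leq (x-x_\alpha)^+\leq \abs{x_\alpha-x}\wedge x$: indeed $(x-x_\alpha)^+\leq x$ because $x_\alpha\geq 0$, and $(x-x_\alpha)^+\leq \abs{x-x_\alpha}$ always. By the definition of $uo$-convergence applied with $y=x\in X_+$, the net $\abs{x_\alpha-x}\wedge x$ is order null, so $(x-x_\alpha)^+\xrightarrow{o}0$, and therefore $\norm{(x-x_\alpha)^+}\to 0$ by order continuity of the norm.

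Finally, from $x=x\wedge x_\alpha+(x-x_\alpha)^+\leq x_\alpha+(x-x_\alpha)^+$ and monotonicity of the norm one gets $\norm{x}\leq \norm{x_\alpha}+\norm{(x-x_\alpha)^+}$ for every $\alpha$; letting $\alpha$ run and using $\norm{(x-x_\alpha)^+}\to 0$ (so that for each $\varepsilon>0$ this term is $<\varepsilon$ eventually) yields $\norm{x}\leq\liminf_\alpha\norm{x_\alpha}+\varepsilon$ for all $\varepsilon>0$, hence the claim. I do not expect a genuine obstacle here; the only points needing a little care are the reduction to the positive case and recognizing that $(x-x_\alpha)^+$ is dominated by exactly the truncation $\abs{x_\alpha-x}\wedge x$ that the $uo$-hypothesis makes order null.
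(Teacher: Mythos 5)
Your proof is correct and is essentially the paper's argument: after your reduction to positive nets, $(x-x_\alpha)^+=x-x\wedge x_\alpha$, which is exactly the quantity the paper controls by showing $\abs{x_\alpha}\wedge\abs{x}\xrightarrow{\norm{\cdot}}\abs{x}$ via the domination $\bigabs{\abs{x_\alpha}\wedge\abs{x}-\abs{x}}\leq\abs{x_\alpha-x}\wedge\abs{x}$ and order continuity of the norm. Both arguments hinge on testing uo-convergence against $y=\abs{x}$ and then comparing $\norm{x}$ with $\norm{x_\alpha}$ up to a vanishing error.
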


\begin{proof}
Clearly, $\bigabs{\abs{x_\alpha}\wedge \abs{x}-\abs{x}\wedge \abs{x}}\leq \bigabs{\abs{x_\alpha}-\abs{x}}\wedge\abs{x}\leq \abs{x_\alpha-x}\wedge \abs{x}\xrightarrow{o}0$. Thus $\abs{x_\alpha}\wedge \abs{x}\xrightarrow{\norm{\cdot}} \abs{x}$.
It follows that $\norm{x}\leq\liminf_\alpha\norm{\abs{x_\alpha}\wedge\abs{x}}\leq\liminf_\alpha\norm{x_\alpha}$.
\end{proof}

\subsection{Norm convergence of uo-convergent nets}
Recall that a subset in $L_1(\mu)$ is relatively weakly compact iff it is almost order bounded (the classical Dunford-Pettis theorem), and iff it is norm bounded and uniformly integrable when $\mu$ is finite.
Recall also that, when $\mu$ is finite, a uniformly integrable, almost everywhere convergent sequence in $L_1(\mu)$ converge in norm. In view of this, the following results may be viewed as \term{Abstract Dominated Theorem}.

\begin{proposition}\label{ncon}
Let $X$ be an order continuous Banach lattice. If $(x_\alpha)$ is almost order bounded\footnote{We mean that the set of members in the net is almost order bounded.} and uo-converges to $x$, then $x_\alpha$ converges to $x$ in norm.\end{proposition}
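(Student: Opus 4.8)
The plan is to exploit almost order boundedness to split each $\abs{x_\alpha-x}$ into an order bounded piece, which is killed by order continuity of the norm, and a tail of uniformly small norm; the twist is that almost order boundedness of the net alone forces the limit $x$ to have an equally small tail. So I would fix $\varepsilon>0$ and, using the Riesz-decomposition reformulation of almost order boundedness recalled in the preliminaries, choose $u\in X_+$ with $\sup_\alpha\bignorm{(\abs{x_\alpha}-u)^+}\le\varepsilon$. The key preliminary observation is that $(\abs{x_\alpha}-u)^+=\abs{x_\alpha}\vee u-u$, whence $\bigabs{(\abs{x_\alpha}-u)^+-(\abs{x}-u)^+}=\bigabs{\abs{x_\alpha}\vee u-\abs{x}\vee u}\le\bigabs{\abs{x_\alpha}-\abs{x}}\le\abs{x_\alpha-x}$, so that $(\abs{x_\alpha}-u)^+\xrightarrow{uo}(\abs{x}-u)^+$ by Lemma~\ref{comparison}. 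Then Lemma~\ref{norm} gives $\bignorm{(\abs{x}-u)^+}\le\liminf_\alpha\bignorm{(\abs{x_\alpha}-u)^+}\le\varepsilon$.

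Next I would use the pointwise estimate $\abs{x_\alpha-x}\le\abs{x_\alpha}+\abs{x}\le 2u+(\abs{x_\alpha}-u)^+ +(\abs{x}-u)^+$, which yields $(\abs{x_\alpha-x}-2u)^+\le(\abs{x_\alpha}-u)^+ +(\abs{x}-u)^+$ and hence $\bignorm{(\abs{x_\alpha-x}-2u)^+}\le 2\varepsilon$ for every $\alpha$. On the other hand, from $x_\alpha\xrightarrow{uo}x$ we get $\abs{x_\alpha-x}\wedge(2u)\xrightarrow{uo}0$, and since this net is order bounded by $2u$ it in fact $o$-converges to $0$; order continuity of the norm then gives $\bignorm{\abs{x_\alpha-x}\wedge(2u)}\to 0$.

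Finally, writing $\abs{x_\alpha-x}=\abs{x_\alpha-x}\wedge(2u)+(\abs{x_\alpha-x}-2u)^+$ and using monotonicity and subadditivity of the norm on the positive cone, I get $\norm{x_\alpha-x}\le\bignorm{\abs{x_\alpha-x}\wedge(2u)}+2\varepsilon$, so $\limsup_\alpha\norm{x_\alpha-x}\le 2\varepsilon$; letting $\varepsilon\to0$ finishes the proof. I do not expect a serious obstacle here: the only point that requires care is the realization that the tail of the limit $x$ must be controlled as well, and that this is precisely where Lemma~\ref{norm} (the lower semicontinuity of the norm along uo-convergent nets) is needed.
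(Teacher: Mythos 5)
Your proof is correct and follows the same core strategy as the paper's: decompose $\abs{x_\alpha-x}$ into the piece $\abs{x_\alpha-x}\wedge(2u)$ below a fixed positive element, which is order null by the definition of uo-convergence and hence norm null by order continuity, plus a tail of uniformly small norm. Where you diverge is in how the uniform tail bound for $\abs{x_\alpha-x}$ is obtained. The paper simply observes that the net $\{\abs{x_\alpha-x}\}$ is itself almost order bounded, which is immediate from $\abs{x_\alpha-x}\leq\abs{x_\alpha}+\abs{x}$ and hence $\bigl(\abs{x_\alpha-x}-(u+\abs{x})\bigr)^+\leq(\abs{x_\alpha}-u)^+$; no information about the limit $x$ beyond its mere existence is needed. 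You instead control the tail of $x$ separately, proving $\bignorm{(\abs{x}-u)^+}\leq\varepsilon$ via the Birkhoff inequality and the Fatou-type Lemma~\ref{norm}, and then combine it with the tail bound for $\abs{x_\alpha}$. Your route is sound (the identity $a=a\wedge b+(a-b)^+$ and the estimate $\bigabs{\abs{x_\alpha}\vee u-\abs{x}\vee u}\leq\abs{x_\alpha-x}$ are both used correctly), but the detour through Lemma~\ref{norm} is avoidable: replacing $2u$ by $u+\abs{x}$ in your decomposition yields the uniform bound directly from $\sup_\alpha\bignorm{(\abs{x_\alpha}-u)^+}\leq\varepsilon$ alone. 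What your version buys in exchange is the explicit observation that the limit of an almost order bounded uo-convergent net automatically satisfies the same tail estimate, which is a point worth knowing even if it is not needed here.
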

\begin{proof}Note that the net $\{\abs{x_\alpha-x}\}$ is also almost order bounded. Fix any $\varepsilon>0$. Then there exists $u>0$ such that, for all $\alpha$,
\begin{align}\label{ineq1}\bignorm{|x_\alpha-x|-|x_\alpha-x|\wedge u}=\bignorm{(|x_\alpha-x|-u)^+}\leq \varepsilon.\end{align}
Since $|x_\alpha-x|\wedge u$ converges to $0$ in order of $X$, we know
\begin{align}\label{ineq2}\bignorm{|x_\alpha-x|\wedge u}\rightarrow  0.\end{align}
Combining \eqref{ineq1} and \eqref{ineq2}, we have $x_\alpha\rightarrow x$ in norm.
\end{proof}

We need the following lemma, which is a  special case of \cite[Theorem~4.37]{ali}. For the convenience of the reader,
we provide a short proof of it based on the classical Dunford-Pettis Theorem; it also illustrates the power of the representation technique from Subsection 2.2.

\begin{lemma}\label{aobgeneral}Let $X$ be an order continuous Banach lattice. Let $A$ be a relatively weakly compact subset of $X$. Then for any $\varepsilon>0$ and $x^* \in X^*_+$, there exists $u\in X_+$ such that $\sup_{x\in A}x^*\left((\abs{x}-u)^+\right)<\varepsilon$.
\end{lemma}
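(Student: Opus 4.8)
The plan is to reduce the statement to the classical Dunford--Pettis theorem by passing to the $L_1$-representation from Subsection~2.2, with the functional $x^*$ playing the role of the integration functional. First I would dispose of the trivial case $x^*=0$ and otherwise assume $x^*\neq 0$. The functional $x^*$ need not be strictly positive, so the natural move is to work on the carrier of $x^*$: let $N=\{y\in X: x^*(\abs{y})=0\}$, which is a (norm-closed, hence in an order continuous space, a projection) band, and let $B=N^{\mathrm d}$ with band projection $P$. On $B$ the restriction of $x^*$ is strictly positive, and since $X$ is order continuous, $x^*$ is order continuous, so $x^*|_B$ is a strictly positive order continuous functional on the order complete (being a band in an order continuous, hence order complete, Banach lattice) vector lattice $B$. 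Moreover, for any $x\in X$ one has $x^*((\abs{x}-u)^+)=x^*((\abs{Px}-u)^+)$ whenever $u\in B_+$, because $x^*$ vanishes on $N$ and $\abs{x}=\abs{Px}+\abs{(I-P)x}$ with $\abs{(I-P)x}\in N$; so it suffices to prove the statement for the set $PA\subset B$, which is relatively weakly compact in $B$ since $P$ is a weakly continuous contraction.

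Next I would invoke the representation machinery. The space $B$ is an order complete vector lattice with the strictly positive order continuous functional $x^*|_B$, so by Subsection~2.2 its $\norm{\cdot}_L$-completion $\widetilde B$ is an AL-space containing $B$ as a norm-dense ideal, and $x^*|_B$ extends to the integration functional on $\widetilde B\cong L_1(\mu)$. The embedding $B\hookrightarrow \widetilde B$ is continuous because $x^*$ is norm continuous on $B$ (it is a bounded functional on $X$), so by Lemma~\ref{al-wcon}(2) the set $PA$ is relatively weakly compact in $\widetilde B=L_1(\mu)$. By the classical Dunford--Pettis theorem, $PA$ is almost order bounded in $L_1(\mu)$: for the given $\varepsilon>0$ there is $v\in \widetilde B_+$ with $\sup_{x\in A}\bignorm{(\abs{Px}-v)^+}_L<\varepsilon$, i.e.\ $\sup_{x\in A}x^*\bigl((\abs{Px}-v)^+\bigr)<\varepsilon$. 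Finally, since $B$ is norm-dense (hence order-dense, and in fact an ideal is order dense) in $\widetilde B$, I can approximate $v$ from below: choose $u\in B_+$ with $\bignorm{v-u}_L$ small, or more cleanly use that $B$ is an ideal so that for $u=v\wedge w$ with suitable $w\in B_+$ one has $(\abs{Px}-u)^+\le (\abs{Px}-v)^++(v-u)^+$ and pick $u$ so that $x^*((v-u)^+)$ is arbitrarily small; then $\sup_{x\in A}x^*((\abs{Px}-u)^+)<\varepsilon$ with $u\in B_+\subset X_+$, which is the desired conclusion.

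The step I expect to be the main obstacle is handling the lack of strict positivity of $x^*$ cleanly — i.e.\ setting up the band $B=N^{\mathrm d}$ and verifying that projecting onto $B$ preserves relative weak compactness and does not change the quantity $x^*((\abs{x}-u)^+)$ for $u\in B_+$. The only other point requiring a little care is the final descent from an almost-order-bounding vector $v\in\widetilde B$ to one in $B$ itself; this is routine density/ideal bookkeeping but must be done so that the resulting $u$ genuinely lies in $X_+$. Everything else is a direct citation: Lemma~\ref{representation} (via the weak-unit-free version of the representation, or simply Kakutani for the AL-space $\widetilde B$), Lemma~\ref{al-wcon}, and the classical Dunford--Pettis theorem recalled in the preliminaries.
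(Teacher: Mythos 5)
Your proposal is correct and follows essentially the same route as the paper: reduce to the carrier band $C_{x^*}=N_{x^*}^{\rm d}$ where $x^*$ is strictly positive (using $(\abs{x}-u)^+\leq (P\abs{x}-u)^++(I-P)\abs{x}$ and $x^*((I-P)\abs{x})=0$), pass to the AL-completion of that band via Subsection~2.2, apply the classical Dunford--Pettis theorem there, and descend from the bounding vector $v$ in the completion to some $u$ in the dense ideal. The only difference is cosmetic: you perform the band reduction first, whereas the paper treats the strictly positive case first and handles general $x^*$ afterwards.
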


\begin{proof}Assume first that $x^*$ is strictly positive. Since $X$ is order continuous, $X$ is order complete and $x^*$ is order continuous. Let $\widetilde{X}$ be the AL-space constructed for the pair $(X,x^*)$ as in Subsection~\ref{AL-rep}.
By Lemma~\ref{al-wcon}, $A$ is relatively weakly compact in the AL-space $\widetilde{X}$, and thus is almost order bounded in $\widetilde{X}$ by Dunford-Pettis theorem. Therefore, there exists $v\in \widetilde{X}$ such that $$\sup_{x\in A}x^*\left((\abs{x}-v)^+\right)=\sup_{x\in A}\bignorm{(\abs{x}-v)^+}_L<\varepsilon.$$
Since $X$ is dense in $\widetilde{X}$, there exists $u\in X$ such that $$x^*(\abs{u-v})=\norm{u-v}_L<\varepsilon.$$ It follows from $(\abs{x}-u)^+\leq (\abs{x}-v)^++\abs{v-u}$ that $\sup_{x\in A}x^*\left((\abs{x}-u)^+\right)<2\varepsilon$.\par

The general case. Put $N_{x^*}=\{x:x^*(\abs{x})=0\}$ and $C_{x^*}=N^{\rm d}_{x^*}$. Then they are both bands of $X$ and are order continuous Banach lattices in their own right. Note that $X=N_{x^*}\oplus C_{x^*}$. Let $P$ be the band projection onto $C_{x^*}$. Clearly, $P(A)$ is relatively weakly compact in $C_{x^*}$ and $x^*$ is strictly positive on $C_{x^*}$. Thus, by the preceding case, there exists $u\in C_{x^*}$ such that $\sup_{x\in A}x^*\left((P\abs{x}-u)^+\right)<\varepsilon$. It follows from $(\abs{x}-u)^+\leq (P\abs{x}-u)^++(I-P)\abs{x}$ and $x^*((I-P)\abs{x})=0$ that $\sup_{x\in A}x^*\left((\abs{x}-u)^+\right)<\varepsilon$.
\end{proof}

\begin{proposition}\label{abs-con}Let $X$ be an order continuous Banach lattice. If $(x_\alpha)$ is relatively weakly compact and uo-converges to $x$, then $x_\alpha$ converges to $x$ in $\abs{\sigma}(X,X^*)$.
\end{proposition}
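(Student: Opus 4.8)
The plan is to verify the defining condition of $\abs{\sigma}(X,X^*)$-convergence directly, namely that $x^*(\abs{x_\alpha-x})\to 0$ for every $x^*\in X^*_+$. The argument will be a single-functional version of the splitting used in the proof of Proposition~\ref{ncon}, with Lemma~\ref{aobgeneral} playing the role that almost order boundedness played there.

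Fix $x^*\in X^*_+$ and $\varepsilon>0$. First I would observe that the translated net $\{x_\alpha-x:\alpha\}$ is again relatively weakly compact, being contained in the weakly compact set $\overline{\{x_\alpha:\alpha\}}^{\,w}-x$. Applying Lemma~\ref{aobgeneral} to this set produces $u\in X_+$ with
\[\sup_\alpha x^*\bigl((\abs{x_\alpha-x}-u)^+\bigr)<\varepsilon.\]
Next, using the lattice identity $\abs{x_\alpha-x}=(\abs{x_\alpha-x}-u)^+ + \abs{x_\alpha-x}\wedge u$ together with positivity of $x^*$, one gets
\[x^*(\abs{x_\alpha-x})<\varepsilon+x^*\bigl(\abs{x_\alpha-x}\wedge u\bigr).\]
Since $x_\alpha\xrightarrow{uo}x$ and $u\in X_+$, we have $\abs{x_\alpha-x}\wedge u\xrightarrow{o}0$; order continuity of the norm then yields $\bignorm{\abs{x_\alpha-x}\wedge u}\to 0$, and hence $x^*(\abs{x_\alpha-x}\wedge u)\to 0$. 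Therefore $\limsup_\alpha x^*(\abs{x_\alpha-x})\le\varepsilon$, and since $\varepsilon>0$ and $x^*\in X^*_+$ were arbitrary, $x_\alpha\to x$ in $\abs{\sigma}(X,X^*)$.

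The only genuine input here is Lemma~\ref{aobgeneral}: that is where relative weak compactness is actually used, and where the representation and Dunford--Pettis machinery of Subsection~\ref{AL-rep} is hidden, so that is the ``hard part'' of the statement. Everything else is routine and mirrors Proposition~\ref{ncon} almost verbatim, the difference being that one tests against a single positive functional rather than against the norm: the translation trick, the $(\,\cdot\,-u)^+ + (\,\cdot\,)\wedge u$ decomposition, and the passage of an order-null net through $x^*$ via order continuity of the norm are all elementary. One could alternatively try to show that $\{\abs{x_\alpha-x}\}$ is itself relatively weakly compact and invoke an Abstract Dominated-type statement, but applying Lemma~\ref{aobgeneral} to $\{x_\alpha-x\}$ is the most direct route and avoids that detour.
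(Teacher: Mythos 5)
Your proof is correct and follows essentially the same route as the paper: apply Lemma~\ref{aobgeneral} to the (relatively weakly compact) translated net to control $x^*\bigl((\abs{x_\alpha-x}-u)^+\bigr)$ uniformly, and kill the remaining term $x^*(\abs{x_\alpha-x}\wedge u)$ using uo-convergence together with order continuity (the paper invokes order continuity of $x^*$ directly, whereas you pass through norm convergence of $\abs{x_\alpha-x}\wedge u$ first; these are equivalent here). Nothing is missing.
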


\begin{proof}Pick any $x^*\in X^*_+$ and $\varepsilon>0$. By Lemma~\ref{aobgeneral}, there exists $u\in X_+$ such that, for all $\alpha$,
\begin{align}\label{ineq3}x^*\left(\abs{x_\alpha-x}-\abs{x_\alpha-x}\wedge u\right)=x^*\left((\abs{x_\alpha-x}- u)^+\right)<\varepsilon.\end{align}
Since $\abs{x_\alpha-x}\wedge u\xrightarrow{o}0$ and $x^*$ is order continuous, we have
\begin{align}\label{ineq4}
x^*(\abs{x_\alpha-x}\wedge u)\rightarrow 0.
\end{align}Combining \eqref{ineq3} and \eqref{ineq4}, we have $x^*(\abs{x_\alpha-x})\rightarrow0$.
\end{proof}

\begin{remark}\label{follow-ncon}
Note that, in general, we cannot replace almost order boundedness by weak compactness in Proposition~\ref{ncon}, or equivalently, we can not expect norm convergence in Proposition~\ref{abs-con}. Indeed, let $(e_n)$ be the standard basis of $\ell_2$ or $c_0$, then $e_n$ weakly and uo-converges to $0$, but $\norm{e_n}=1$ for all $n$.
\end{remark}

We now characterize the spaces where we can expect norm convergence. Recall that a Banach lattice $X$ is said to have the \term{positive Schur property} if $\norm{x_n}\rightarrow0$ whenever $0\leq x_n\xrightarrow{w}0$ in $X$. It is easily seen that the standard basis of $c_0$ is positive, weakly null, but not norm null. Thus, any Banach lattice with the positive Schur property cannot contain lattice copies of $c_0$, and therefore, is a KB-space. In particular, it is order continuous.

\begin{theorem}\label{psp}Let $X$ be a $\sigma$-order complete Banach lattice. The following are equivalent:
\begin{enumerate}
\item\label{psp1} $X$ has the positive Schur property;
\item\label{psp2} for every relatively weakly compact net $(x_\alpha)\subset X$, $0\leq x_\alpha\xrightarrow{w}0$ implies $x_\alpha\xrightarrow{\norm{\cdot}}0$;
\item\label{psp3} for any sequence $(x_n)\subset X$ and $x\in X$,  $x_n\xrightarrow[uo]{w}x$ implies $x_n\xrightarrow{\norm{\cdot}}x$;
\item\label{psp4} for any relatively weakly compact net $(x_\alpha)\subset X$ and $x\in X$, $x_\alpha\xrightarrow{uo}x$ implies $x_\alpha\xrightarrow{\norm{\cdot}}x$;
\item\label{psp5} for any sequence $(x_n)\subset X$, $0\leq x_n\xrightarrow[uo]{w}0$ implies $x_n\xrightarrow{\norm{\cdot}}0$;
\item\label{psp6} for any relatively weakly compact net $(x_\alpha)$, $0\leq x_\alpha\xrightarrow{uo}0$ implies $x_\alpha\xrightarrow{\norm{\cdot}}0$;
    \end{enumerate}
\end{theorem}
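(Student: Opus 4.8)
The plan is to prove enough of the implications among (1)--(6) to close a cycle; most are routine bookkeeping, and exactly one requires real work. Recall (as noted just before the theorem) that the positive Schur property forces $X$ to contain no lattice copy of $c_0$, hence to be a KB-space, hence order continuous. For the bookkeeping, a weakly convergent sequence has relatively weakly compact range; combined with this observation, $(2)\Rightarrow(1)$, $(4)\Rightarrow(3)$, $(4)\Rightarrow(6)$ and each of $(2),(3),(6)\Rightarrow(5)$ are immediate. It therefore suffices to prove $(1)\Rightarrow(2)$, $(1)\Rightarrow(4)$ and $(5)\Rightarrow(1)$.

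For $(1)\Rightarrow(2)$ one merely upgrades the sequential statement to nets: if $(x_\alpha)$ is relatively weakly compact with $0\leq x_\alpha\xrightarrow{w}0$ but $\norm{x_\alpha}\not\to 0$, pass to a cofinal subnet with $\norm{x_\alpha}\geq\varepsilon$; its range $A$ is relatively weakly compact with $0\in\overline{A}^{\,w}$, so by the angelicity of weakly compact subsets of Banach spaces (Eberlein--\v{S}mulian) there is a \emph{sequence} $(y_k)\subset A$ with $y_k\xrightarrow{w}0$, and then $0\leq y_k\xrightarrow{w}0$ with $\norm{y_k}\geq\varepsilon$ contradicts the positive Schur property. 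For $(1)\Rightarrow(4)$: $X$ is order continuous, and the positive Schur property forces every relatively weakly compact $A\subset X$ to be almost order bounded (the Dunford--Pettis direction: otherwise the standard disjointification produces a disjoint sequence $(d_n)$ in the solid hull of $A$ with $\inf_n\norm{d_n}>0$, while Lemma~\ref{aobgeneral} together with order continuity forces $0\leq d_n\xrightarrow{w}0$, contradicting PSP). Hence $\{x_\alpha\}$, and so $\{x_\alpha-x\}$, is almost order bounded; since $x_\alpha-x\xrightarrow{uo}0$, Proposition~\ref{ncon} gives $\norm{x_\alpha-x}\to 0$. (Alternatively one may combine $(1)\Rightarrow(2)$, Proposition~\ref{abs-con} and the fact that the lattice operations preserve relative weak compactness in Banach lattices.)

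The substance of the theorem is $(5)\Rightarrow(1)$, where the obstacle is that a positive weakly null sequence need not be uo-null, nor admit any uo-null subsequence (for instance $\bigl(\sqrt 2\,\mathbf{1}_{\{r_n=1\}}\bigr)$ in $L_2[0,1]$, $(r_n)$ the Rademacher functions), so (5) cannot be applied naively; this is circumvented via the $L_1$-representation of Subsection~\ref{AL-rep}. First, (5) already gives that $X$ has no lattice copy of $c_0$: the unit vectors of such a copy would be positive, disjoint (hence uo-null), weakly null (their images under functionals lie in $c_0^*=\ell_1$) and not norm null, contradicting (5); so $X$ is a KB-space, in particular order continuous. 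Now assume $X$ fails the positive Schur property, fix $0\leq x_n\xrightarrow{w}0$ with $\norm{x_n}\geq\varepsilon$, and pass to the band $B$ generated by $\{x_n:n\in\mathbb{N}\}$: it is an order continuous (hence order complete) Banach lattice with weak unit $e:=\sum_n 2^{-n}x_n/(1+\norm{x_n})$, and being order continuous with a weak unit it carries a strictly positive order continuous functional, so Lemma~\ref{representation} identifies its completion $\widetilde{B}$ with $L_1(\mu)$ for a finite measure $\mu$. The $x_n$ are still positive and weakly null in $B$, hence weakly null in $\widetilde{B}$ by Lemma~\ref{al-wcon}; since $L_1(\mu)$ has the positive Schur property, $\norm{x_n}_{L_1(\mu)}\to 0$, so along a subsequence $x_{n_k}\to 0$ $\mu$-a.e., i.e.\ $x_{n_k}\xrightarrow{uo}0$ in $\widetilde{B}$. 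By Lemma~\ref{uo-equiv} (applied to $B$) and Lemma~\ref{uo-ideal} ($B$ an ideal of $X$), $x_{n_k}\xrightarrow{uo}0$ in $X$; thus $0\leq x_{n_k}\xrightarrow{uo}0$ and $x_{n_k}\xrightarrow{w}0$ in $X$, and (5) forces $\norm{x_{n_k}}\to 0$, contradicting $\norm{x_{n_k}}\geq\varepsilon$. Together with the routine implications this closes $(1)\Leftrightarrow(2)$, the cycle $(1)\Rightarrow(4)\Rightarrow(3)\Rightarrow(5)\Rightarrow(1)$ and $(4)\Rightarrow(6)\Rightarrow(5)$, so all six statements are equivalent. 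The only real difficulty is $(5)\Rightarrow(1)$: making the $L_1$-representation supply a uo-null subsequence that no direct argument yields, and tracking uo-convergence faithfully along $X\supset B\hookrightarrow\widetilde{B}=L_1(\mu)$.
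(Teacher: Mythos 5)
Your argument is logically correct, and for the two implications that carry the weight of the theorem it follows the paper's own route: $(5)\Rightarrow(1)$ via the band $B$ generated by the sequence, the AL-completion $\widetilde{B}=L_1(\mu)$ of Subsection~\ref{AL-rep}, extraction of an a.e.-null subsequence, and transfer of uo-convergence back through Lemmas~\ref{uo-equiv} and~\ref{uo-ideal}; and $(1)\Rightarrow(2)$ via a cofinal subnet together with the angelicity result \cite[Theorem~4.50]{fab2001}. The differences lie in the easy implications. The paper reaches (4) from (2) by applying Proposition~\ref{abs-con} to get $\abs{x_\alpha-x}\xrightarrow{w}0$ and then \cite[Theorem~4.39]{ali} to keep relative weak compactness; your primary route to (4) instead shows that the positive Schur property makes every relatively weakly compact set almost order bounded and then applies Proposition~\ref{ncon}. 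That is valid, but it silently imports the nontrivial equivalence between failure of almost order boundedness and the existence of a disjoint sequence in the solid hull with norms bounded below (\cite[Proposition~3.6.2]{mn1991}), i.e.\ essentially the hard half of Theorem~\ref{spsp}, which the paper deliberately defers and later deduces \emph{from} Theorem~\ref{psp}; your parenthetical alternative is the paper's lighter route and is preferable. Likewise you derive order continuity from (5) by excluding lattice copies of $c_0$ (using that disjoint sequences are uo-null, which is where $\sigma$-order completeness enters), whereas the paper argues with disjoint order bounded sequences and \cite[Theorem~4.14]{ali}; both work. One factual slip, confined to a motivational aside: $\sqrt2\,\mathbf{1}_{\{r_n=1\}}$ is \emph{not} weakly null (it converges weakly to $2^{-1/2}\mathbf{1}$), and the claimed obstacle that a positive weakly null sequence may admit no uo-null subsequence is false in the order continuous setting --- indeed your own $(5)\Rightarrow(1)$ argument (or, in $L_2[0,1]$, testing against $\mathbf{1}$) produces such a subsequence. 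The genuine obstacle is only that the \emph{full} sequence need not be uo-null, so (5) cannot be applied without first passing to a subsequence; none of this affects the correctness of the proof itself.
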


\begin{proof}We first establish the equivalence of \eqref{psp1}, \eqref{psp3} and \eqref{psp5}. The implications \eqref{psp1}$\Rightarrow$\eqref{psp5} and \eqref{psp3}$\Rightarrow$\eqref{psp5} are obvious.\par

Suppose \eqref{psp5} holds. We first show that $X$ is an order continuous Banach lattice. Indeed, take any disjoint order bounded sequence $(x_n)$. Suppose $\abs{x_n}\leq x$ for all $n$. For any $x^*\in X^*_+$, we have $\sum_1^nx^*(\abs{x_i})=x^*(\sum_1^n\abs{x_i})=x^*(\vee_1^n \abs{x_i})\leq x^*(x)$ for all $n$. Thus $\sum_1^\infty x^*(\abs{x_n})$ converges. In particular, $x^*(\abs{x_n})\rightarrow0$. It follows that $\abs{x_n}\xrightarrow{w}0$. Now we claim $x_n\xrightarrow{o}0$. Since $X$ is $\sigma$-order complete, it suffices to prove $y_n:=\sup_{m\geq n}\abs{x_m}\downarrow0$. Suppose $0\leq u\leq y_n$ for all $n$. Then $u=y_n\wedge u=\left(\sup_{m\geq n}(\abs{x_m}\wedge u)\right)\perp x_{n-1}$ for all $n\geq 2$. It follows that $u=y_1\wedge u=\sup_{n\geq 1}(\abs{x_n}\wedge u)=0$. This proves the claim. Therefore, $\norm{x_n}\rightarrow0$ by the assumption \eqref{psp5}. Hence, $X$ is order continuous (cf.~\cite[Theorem~4.14]{ali}).\par

\eqref{psp5}$\Rightarrow$\eqref{psp1} Suppose $0\leq x_n\xrightarrow{w}0$ but $\norm{x_n}\not\rightarrow0$. By passing to a subsequence, we may assume $\inf_n\norm{x_n}>0$. Let $B$ be the band generated by $(x_n)$. Since $X$ is order continuous, so is $B$. Thus, $B$ having a weak unit implies that it has a strictly positive functional $x_0^*>0$ (cf.~\cite[Theorem~4.15]{ali}).
Let $\widetilde{B}$ be the AL-space constructed for the pair $(B,x_0^*)$ as in Subsection~\ref{AL-rep}. Then $\norm{x_n}_L=x_0^*(x_n)\rightarrow 0$. Therefore, $(x_n)$ has a subsequence $(x_{n_k})$ which converges in order to $0$ in $\widetilde{B}$\footnote{Suppose $\norm{x_n}\rightarrow 0$ in a Banach lattice. Take a subsequence $(x_{n_k})$ such that $\norm{x_{n_k}}\leq 2^{-k}$. Put $y_k=\sum_{j=k}^\infty \abs{x_{n_j}}$. Then $\abs{x_{n_k}}\leq y_k\downarrow 0$, and thus $x_{n_k}\xrightarrow{o}0$.}. In particular, $x_{n_k}\xrightarrow{uo}0$ in $\widetilde{B}$, hence in $B$ by Lemma~\ref{uo-equiv}, and in $X$ by Lemma~\ref{uo-ideal}. By the assumption \eqref{psp5}, we have $\norm{x_{n_k}}\rightarrow0$, a contradiction.\par

\eqref{psp5}$\Rightarrow$\eqref{psp3} Note that $X$ is order continuous. Take any $x_n\xrightarrow[uo]{w}x$. Since $(x_n)$ is relatively weakly compact, we have $\abs{x_n-x}\xrightarrow{w}0$ by Proposition~\ref{abs-con}. Thus it follows from $\abs{x_n-x}\xrightarrow{uo}0$ and the assumption \eqref{psp5} that $x_{n}\xrightarrow{\norm{\cdot}}x$.\par

\eqref{psp1}$\Rightarrow$\eqref{psp2} Suppose $(x_\alpha)$ is relatively weakly compact and $0\leq x_\alpha\xrightarrow{w}0$. Suppose also $x_\alpha$ does not converge to $0$ in norm. Then there exists $\varepsilon>0$ such that for any $\alpha$, there exists $\beta(\alpha)\geq \alpha$ satisfying $\norm{x_{\beta(\alpha)}}\geq \varepsilon$. Thus, by passing to the subnet $(x_{\beta(\alpha)})$, we may assume $\inf_\alpha\norm{x_\alpha}>0$. Since $0\in \overline{\{x_\alpha:\alpha\}}^w$, there exists a sequence $(y_n)_1^\infty\subset \{x_\alpha:\alpha\}$ such that $y_n\xrightarrow{w}0$, by \cite[Theorem~4.50]{fab2001}. The positive Schur property now implies $\norm{y_n}\rightarrow 0$, a contradiction.\par

\eqref{psp2}$\Rightarrow$\eqref{psp4} The assumption \eqref{psp2} clearly implies $X$ has the Schur property, hence $X$ is a KB-space. Now suppose $(x_\alpha)$ is relatively weakly compact and $x_\alpha\xrightarrow{uo}x$. Then $\abs{x_\alpha-x}\xrightarrow{w}0$ by Proposition~\ref{abs-con}. By \cite[Theorem~4.39]{ali}, $\{\abs{x_\alpha-x}:\alpha\}$ is also relatively weakly compact. Therefore, it follows from the assumption \eqref{psp2} that $\norm{x_\alpha-x}\rightarrow 0$.\par

The implications \eqref{psp4}$\Rightarrow$\eqref{psp6}$\Rightarrow$\eqref{psp5} are obvious.
\end{proof}

\subsection{Dunford-Pettis theorem in Banach lattices}
We end this section with a known characterization of order continuous Banach lattices in which the Dunford-Pettis theorem holds, namely, every relatively weakly compact subset is almost order bounded. It follows from \cite[Theorem 7]{wnuk} (cf.~\cite[Theorem~3.1]{chen}) and \cite[Proposition~3.6.2]{mn1991}. We give an alternative proof using uo-convergence.

\begin{lemma}\label{wc-aob}Let $X$ be a Banach lattice with the positive Schur property. Suppose that $X$ has a weak unit $x_0$. Then every relatively weakly compact subset of $X$ is almost order bounded.
\end{lemma}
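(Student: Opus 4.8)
The plan is to transfer the problem into $L_1(\mu)$ via the representation of Subsection~\ref{AL-rep}, invoke the classical Dunford--Pettis theorem and uniform integrability there, and then bootstrap back to the norm of $X$ using the positive Schur property. Since $X$ has the positive Schur property it is a KB-space, hence order continuous; being order continuous with the weak unit $x_0$, it carries a strictly positive, order continuous (hence norm continuous) functional $x_0^*>0$ (cf.~\cite[Theorem~4.15]{ali}). Let $\widetilde X$ be the AL-space built for the pair $(X,x_0^*)$; by Lemma~\ref{representation} we may write $\widetilde X=L_1(\mu)$ with $\mu$ finite and $x_0$ the constant function $1$.

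Now take a relatively weakly compact $A\subset X$. By Lemma~\ref{al-wcon}(2) it is relatively weakly compact in $\widetilde X=L_1(\mu)$, hence norm bounded and uniformly integrable there; set $M=\sup_{x\in A}\norm{x}_L<\infty$. Arguing by contradiction, suppose $A$ is \emph{not} almost order bounded in $X$, so there is $\varepsilon>0$ such that for every $u\in X_+$ some $x\in A$ has $\bignorm{(\abs{x}-u)^+}_X>\varepsilon$. I would feed $u=nx_0$ into this and obtain $x_n\in A$ with $y_n:=(\abs{x_n}-nx_0)^+$ satisfying $\norm{y_n}_X>\varepsilon$ and $0\leq y_n\leq\abs{x_n}$. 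The reason for the choice $u=nx_0$ is that in the $L_1$-picture $y_n=(\abs{x_n}-n)^+$ is supported on $E_n:=\{\abs{x_n}>n\}$, and $\mu(E_n)\leq \norm{x_n}_L/n\leq M/n\to 0$, so uniform integrability of $A$ yields $\norm{y_n}_L=\int y_n\,d\mu\leq\int_{E_n}\abs{x_n}\,d\mu\to 0$; that is, $y_n\to 0$ in the norm of $\widetilde X$.

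The remaining step, which I expect to be the main obstacle, is to upgrade this $L_1$-norm decay of the ``tails'' $y_n$ into weak convergence \emph{inside} $X$. Since $X$ is order continuous, the solid hull of the relatively weakly compact set $A$ is relatively weakly compact in $X$ (cf.~\cite[Theorem~4.39]{ali}), and $(y_n)$ lies in it; hence by the Eberlein--\v{S}mulian theorem some subsequence $y_{n_k}\xrightarrow{w}z$ in $X$. The canonical inclusion $X\hookrightarrow\widetilde X$ is injective and continuous, hence weak-to-weak continuous, so $y_{n_k}\xrightarrow{w}z$ in $\widetilde X$ as well; combined with $y_{n_k}\to 0$ in $\widetilde X$-norm this forces $z=0$. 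Thus $0\leq y_{n_k}\xrightarrow{w}0$ in $X$, and the positive Schur property gives $\norm{y_{n_k}}_X\to 0$, contradicting $\norm{y_{n_k}}_X>\varepsilon$. Therefore $A$ is almost order bounded, completing the plan; the crucial ingredients are the relative weak compactness of the solid hull (to keep $(y_n)$ weakly precompact in $X$) and the injectivity of the embedding into $\widetilde X$ (to pin down the weak limit).
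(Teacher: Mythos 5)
Your argument is correct, and it reaches the same endgame as the paper --- apply the positive Schur property to the positive sequence of tails $y_n=(\abs{x_n}-nx_0)^+$ after showing it is weakly null --- but the way you establish weak null-ness is genuinely different. The paper first proves a uniform estimate (Lemma~\ref{aobgeneral}): for every $x^*\in X^*_+$ and $\varepsilon>0$ there is $u\in X_+$ with $\sup_{x\in A}x^*\left((\abs{x}-u)^+\right)<\varepsilon$, and then uses the weak unit to replace $u$ by $nx_0$; this yields $\sup_{x\in A}x^*\left((\abs{x}-nx_0)^+\right)\to 0$ for each positive functional, so the tails are weakly null in $X$ directly, with no subsequence extraction. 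You instead test only against the single strictly positive functional $x_0^*$ (i.e.\ you get $\norm{y_n}_L\to0$ in the $L_1$-representation, via Chebyshev plus uniform integrability) and must then upgrade this to weak convergence in $X$; you do so correctly by placing $(y_n)$ in the weakly compact solid hull of $A$ (\cite[Theorem~4.39]{ali}, applicable because the positive Schur property forces order continuity), extracting a weakly convergent subsequence by Eberlein--\v{S}mulian, and identifying the limit as $0$ through the injective, continuous embedding $X\hookrightarrow\widetilde X$; the contradiction survives passage to the subsequence since $\norm{y_{n_k}}_X>\varepsilon$ for all $k$. Each route buys something: the paper's Lemma~\ref{aobgeneral} handles arbitrary positive functionals (including non-strictly-positive ones, via a band decomposition) and is reused elsewhere, whereas your argument is more self-contained and makes the $L_1$-mechanics explicit, at the cost of the extra compactness-and-subsequence step.
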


\begin{proof}Let $A\subset X$ be relatively weakly compact. We claim that, $\forall\;x^*\in X^*_+$, \begin{align}\label{unif}\lim_n\sup_{x\in A}x^*\left((\abs{x}-nx_0)^+\right)=0.\end{align}
Indeed, since $X$ has the positive Schur property, it is order continuous. Thus, by Lemma~\ref{aobgeneral}, for any $x^*>0$ and any $\varepsilon>0$, there exists $u\in X_+$ such that $\sup_{x\in A}x^*\left((\abs{x}-u)^+\right)<\varepsilon$. Since $x_0$ is a weak unit of ${X}$, there exists $n_0$ such that $\norm{(u-nx_0)^+}=\norm{u-u\wedge nx_0}<\varepsilon$ for all $n\geq n_0$. It follows that $x^*\left({(u- nx_0)^+}\right)<\norm{x^*}\varepsilon$ for all $n\geq n_0$. Hence,
$\sup_{x\in A}x^*\left((\abs{x}-nx_0)^+\right)\leq (1+\norm{x^*})\varepsilon$ for all $n\geq n_0$. Therefore, $\lim_n\sup_{x\in A}x^*\left((\abs{x}-nx_0)^+\right)=0$.\par

We now show that, for any $\varepsilon>0$, there exists $n$ such that $\sup_{x\in A}\bignorm{(\abs{x}-nx_0)^+}\leq \varepsilon$. Suppose not. Then there exists $\varepsilon>0$ such that for any $n\geq 1$ there exists $x_n\in A$ with $\norm{(\abs{x_n}-nx_0)^+}>\varepsilon$. But we have, by \eqref{unif}, $(\abs{x_n}-nx_0)^+\xrightarrow{w}0$. Thus, $\bignorm{(\abs{x_n}-nx_0)^+}\rightarrow0$ by the positive Schur property, a contradiction.
\end{proof}

\begin{theorem}\label{spsp}Let $X$ be an order continuous Banach lattice. The following are equivalent:
\begin{enumerate}
\item\label{spsp1} $X$ has the positive Schur property;
\item\label{spsp2} every relatively weakly compact countable subset of $X$ is almost order bounded;
\item\label{spsp3} every relatively weakly compact subset of $X$ is almost order bounded.
\end{enumerate}
\end{theorem}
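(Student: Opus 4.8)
The plan is to prove the cycle \eqref{spsp3}$\Rightarrow$\eqref{spsp2}$\Rightarrow$\eqref{spsp1}$\Rightarrow$\eqref{spsp3}. The first implication is trivial (countable subsets are subsets), so the content is in the other two.

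For \eqref{spsp2}$\Rightarrow$\eqref{spsp1} the key auxiliary fact I would isolate is: \emph{in an order continuous Banach lattice an order bounded weakly null sequence is norm null.} Granting it, if $0\le x_n\xrightarrow{w}0$ then $\{x_n\}$ is relatively weakly compact, so by \eqref{spsp2} there is, for each $\varepsilon>0$, some $u\in X_+$ with $\bignorm{(x_n-u)^+}\le\varepsilon$ for all $n$; since $x_n=x_n\wedge u+(x_n-u)^+$ and $(x_n\wedge u)$ is order bounded and weakly null, hence norm null, we get $\limsup_n\norm{x_n}\le\varepsilon$, whence $\norm{x_n}\to0$. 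To prove the auxiliary fact, given $0\le g_n\le u$ with $g_n\xrightarrow{w}0$, I would pass to the band $B$ generated by $u$: it is order complete and order continuous, has $u$ as a weak unit, and admits a strictly positive order continuous functional $x_0^*$ (by \cite[Theorem~4.15]{ali}; a positive functional on a Banach lattice is norm continuous, and order continuity of $X$ makes it order continuous). Form the AL-space $\widetilde B$ for the pair $(B,x_0^*)$ as in Subsection~\ref{AL-rep}, which by Lemma~\ref{representation} is lattice isometric to some $L_1(\mu)$ with $u$ the constant $1$ function. Since $x_0^*\in B^*$ and $g_n\xrightarrow{w}0$ in $B$, we have $\norm{g_n}_L=x_0^*(g_n)\to0$, i.e.\ $g_n\to0$ in norm in $\widetilde B$; passing to a fast subsequence and dominating it by a decreasing null sequence (the footnote trick used in the proof of Theorem~\ref{psp}) gives $g_{n_k}\xrightarrow{o}0$ in $\widetilde B$, hence $g_{n_k}\xrightarrow{uo}0$ in $\widetilde B$, hence in $B$ by Lemma~\ref{uo-equiv} and in $X$ by Lemma~\ref{uo-ideal}. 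Being order bounded in $X$ this is order convergence, so $\norm{g_{n_k}}\to0$ by order continuity; as every subsequence has such a further subsequence, $\norm{g_n}\to0$.

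For \eqref{spsp1}$\Rightarrow$\eqref{spsp3} I would first record the easy half-step \eqref{spsp1}$\Rightarrow$\eqref{spsp2}: if $A=\{x_n\}$ is countable and relatively weakly compact, the band $B$ it generates carries the weak unit $e=\sum_n 2^{-n}\abs{x_n}/(1+\norm{x_n})$ and inherits order continuity and the positive Schur property; as $A$ is relatively weakly compact in $B$, Lemma~\ref{wc-aob} shows $A$ is almost order bounded in $B$, hence in $X$. Then, to pass from countable to arbitrary sets, suppose for contradiction that a relatively weakly compact $A$ fails to be almost order bounded. By a standard disjointification (carried out using the positive Schur property through Lemma~\ref{wc-aob}, exactly as in that lemma's proof; cf.\ also \cite[Proposition~3.6.2]{mn1991}) there are $(x_k)\subset A$, pairwise disjoint $d_k$ with $0\le d_k\le\abs{x_k}$, and $\varepsilon>0$ with $\norm{d_k}\ge\varepsilon$ for all $k$. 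The countable set $\{x_k\}$ is relatively weakly compact, hence almost order bounded by the half-step: there is $u\in X_+$ with $\bignorm{(\abs{x_k}-u)^+}\le\varepsilon/2$, so $\bignorm{(d_k-u)^+}\le\varepsilon/2$ and $\norm{d_k\wedge u}\ge\norm{d_k}-\varepsilon/2\ge\varepsilon/2$. But $(d_k\wedge u)$ is a disjoint order bounded sequence, hence order null, hence norm null by order continuity --- contradicting $\norm{d_k\wedge u}\ge\varepsilon/2$.

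The step I expect to be the main obstacle is the disjointification in \eqref{spsp1}$\Rightarrow$\eqref{spsp3}: extracting from the failure of almost order boundedness of $A$ a pairwise disjoint sequence that is dominated by members of $A$ and bounded away from $0$ in norm. This is the only place where the positive Schur property is used beyond the countable case, and it is the sole ingredient not already developed in the excerpt. Everything else --- the AL-space transfer in \eqref{spsp2}$\Rightarrow$\eqref{spsp1}, the band reduction for countable sets, and the fact that a disjoint order bounded sequence is order (hence norm) null --- is routine given the representation of Subsection~\ref{AL-rep} and the basic properties of uo-convergence established in Section~\ref{uo-convergence}.
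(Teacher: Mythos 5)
Your cycle is the right shape, and two of the three nontrivial pieces are sound. The direction \eqref{spsp2}$\Rightarrow$\eqref{spsp1} is correct: your auxiliary fact (in an order continuous Banach lattice, an order bounded positive weakly null sequence is norm null) is true, and your proof of it via the band generated by $u$, the strictly positive functional from \cite[Theorem~4.15]{ali}, and the AL-space $\widetilde B$ is exactly the transfer technique the paper uses in the step \eqref{psp5}$\Rightarrow$\eqref{psp1} of Theorem~\ref{psp}; the paper instead gets \eqref{spsp2}$\Rightarrow$\eqref{spsp1} in one line from Proposition~\ref{ncon} together with Theorem~\ref{psp}\eqref{psp5}, but your decomposition $x_n=x_n\wedge u+(x_n-u)^+$ works equally well. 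Your ``half-step'' \eqref{spsp1}$\Rightarrow$\eqref{spsp2} via the weak unit $e=\sum_n 2^{-n}\abs{x_n}/(1+\norm{x_n})$ and Lemma~\ref{wc-aob} is also fine.

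The genuine gap is where you predicted it: the disjointification in \eqref{spsp1}$\Rightarrow$\eqref{spsp3}. The statement you need --- if a norm bounded $A$ is not almost order bounded, then there are $x_k\in A$ and pairwise disjoint $0\le d_k\le\abs{x_k}$ with $\norm{d_k}\ge\varepsilon$ --- is true, but it is precisely the hard direction of \cite[Proposition~3.6.2]{mn1991} (the characterization of L-weakly compact sets by disjoint sequences in the solid hull), and it does not come from ``the proof of Lemma~\ref{wc-aob}'': that proof contains no disjointification whatsoever, only an exhaustion by multiples of the weak unit. So as written the key step is neither proved nor correctly attributed. Moreover, granting \cite[Proposition~3.6.2]{mn1991} here defeats the stated purpose of the theorem in the paper, which announces that the result ``follows from \cite[Theorem~7]{wnuk} and \cite[Proposition~3.6.2]{mn1991}'' and that the point is to give an \emph{alternative} proof avoiding exactly that machinery. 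The paper closes this step differently: it fixes a maximal disjoint system $\{y_\gamma\}$, lets $P_\delta$ be the band projections onto the bands generated by finite subsets, and shows $\inf_\delta\sup_{x\in A}\bignorm{(I-P_\delta)\abs{x}}=0$ by observing that any net $\big((I-P_\delta)\abs{x_\delta}\big)_\delta$ with $x_\delta\in A$ is uo-null (since $\big((I-P_\delta)\abs{x_\delta}\big)\wedge y\le(I-P_\delta)y\xrightarrow{o}0$) and relatively weakly compact, hence norm null by Theorem~\ref{psp}\eqref{psp6}; then Lemma~\ref{wc-aob} is applied inside a single $B_\delta$, which has a weak unit. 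If you want a proof in the spirit of the excerpt, replace your disjointification by this band-projection net argument; otherwise you must actually prove (or explicitly import) the disjoint-sequence characterization of almost order boundedness.
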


\begin{proof}\eqref{spsp1}$\Rightarrow$\eqref{spsp3}  Let $\{y_{\gamma} :\gamma\in\Gamma\}$ be a maximal collection of pairwise disjoint elements of $X$. Let $\Delta$ be the collection of all finite subsets of $\Gamma$ directed by inclusion. For each $\delta=\{\gamma_1,\dots,\gamma_n\}\in\Delta$, the band $B_\delta$ generated by $\{y_{\gamma_i}\}_1^n$ is an order continuous Banach lattice with the positive Schur property and has a weak unit $y_\delta=\sum_1^ny_{\gamma_i}$. Let $P_\delta$ be the band projection onto $B_\delta$. Observe that $P_\delta x\uparrow x$ for all $x\in X_+$. Indeed, for any $x\in X_+$, $(P_\delta x)$ is increasing and norm bounded. Since $X$ is a KB-space, $(P_\delta x)$ is norm convergent. Thus, for each $\gamma \in \Gamma$, we have $\abs{x-\lim P_\delta x} \wedge y_\gamma=\lim \big(\abs{(I-P_\delta)x}\wedge y_\gamma\big)=0$. Now maximality of $\mathscr{C}$ implies $x=\lim P_\delta x$. This proves the observation.\par

Let $A$ be relatively weakly compact.
We first show that \begin{align}\label{dont}\inf_\delta\sup_{x\in A}\bignorm{(I-P_\delta)\abs{x}}=0.\end{align}
Suppose, otherwise, $2c:=\inf_\delta\sup_{x\in A}\bignorm{(I-P_\delta)\abs{x}}>0$. Then for each $\delta$, there exists $x_\delta\in A$ such that $\bignorm{(I-P_{\delta})\abs{x_\delta}}>c$. Consider the net $\big((I-P_\delta)\abs{x_\delta}\big)$. We claim that $(I-P_\delta)\abs{x_\delta}\xrightarrow{uo}0$. Indeed, for any $y\in X_+$, we have $
\big((I-P_\delta)\abs{x_\delta}\big)\wedge y=(I-P_\delta)\Big(\big((I-P_\delta)\abs{x_\delta}\big)\wedge y\Big)
\leq (I-P_\delta)y\xrightarrow{o}0$. This proves the claim.
Note also that, as a subset of the solid hull of $A$, $\big((I-P_\delta)\abs{x_\delta}\big)$ is also relatively weakly compact by \cite[Theorem~4.39]{ali}. Therefore, $\bignorm{(I-P_\delta)\abs{x_\delta}}\rightarrow 0$ by Theorem~\ref{psp}\eqref{psp6}, a contradiction. This completes the proof of \eqref{dont}.\par

Therefore, for any $\varepsilon>0$, we can find $\delta$ such that \begin{align}\label{nnnn1}\sup_{x\in A}\bignorm{(I-P_\delta)\abs{x}}<\varepsilon.\end{align}
By \cite[Theorem~4.39]{ali} again, $P_\delta(\abs{A})$ is relatively weakly compact in $X$, and hence in $B_\delta$. Since $B_\delta$ has the positive Schur property and a weak unit, by Lemma~\ref{wc-aob}, there exists $0<u\in B_\delta$ such that \begin{align}\label{nnnn2}\sup_{x\in A}\bignorm{(P_\delta\abs{x}-u)^+}=\sup_{x\in A}\bignorm{P_\delta\abs{x}-(P_\delta\abs{x})\wedge u}<\varepsilon.\end{align}
Combining \eqref{nnnn1} and \eqref{nnnn2}, one gets $\sup_{x\in A}\bignorm{\abs{x}-\abs{x}\wedge u}\leq \sup_{x\in A}\bignorm{\abs{x}-(P_\delta\abs{x})\wedge u}<2\varepsilon$.
Hence, $A$ is almost order bounded.\par

The implication \eqref{spsp3}$\Rightarrow$\eqref{spsp2} is obvious. We now prove \eqref{spsp2}$\Rightarrow$\eqref{spsp1}. Take any sequence $(x_n)$ with $0\leq x_n\xrightarrow[uo]{w}0$. Then $(x_n)$ is relatively weakly compact, and therefore, almost order bounded by assumption. Proposition~\ref{ncon} implies $\norm{x_n}\rightarrow 0$. Hence, $X$ has the positive Schur property by Theorem~\ref{psp}\eqref{psp5}.
\end{proof}

\section{Unbounded Order Cauchy}

In view of the fact that the almost everywhere limit of a sequence in $L_1$ may not belong to $L_1$, we introduce the following notion. In a vector lattice $X$, a net $(x_\alpha)$ is said to be \term{unbounded order Cauchy} (or, uo-Cauchy for short), if the net $(x_\alpha-x_{\alpha'})_{(\alpha,\alpha')}$ uo-converges to $0$.\par

It is easily seen that, for order bounded nets, uo-Cauchy is equivalent to o-Cauchy, and therefore, in order complete lattices, is equivalent to o-convergence\footnote{In order complete lattices, order bounded o-Cauchy nets are order convergent.}.\par

One can also easily see that, in $c_0$ or $\ell_p (1\leq p\leq \infty)$, uo-Cauchy of nets is the same as coordinate-wise Cauchy, and that, in $L_p(\mu) (1\leq p<\infty)$, uo-Cauchy of sequences is the same as almost everywhere Cauchy.

Note that every uo-convergent net is uo-Cauchy. In the rest of this subsection, we provide some sufficient conditions to yield uo-convergence of uo-Cauchy nets. The following are two obvious cases.

\begin{remark}\label{trivial}
\begin{enumerate}
\item\label{trivial1} If a uo-Cauchy net $(x_\alpha)$ has a uo-convergent subnet whose uo-limit is $x$, then $x_\alpha\xrightarrow{uo}x$. The proof is straightforward verification.
\item\label{trivial2} A norm convergent uo-Cauchy net uo-converges to its norm limit. One can easily show this by using the continuity of lattice operations with respect to norm.
\end{enumerate}
\end{remark}

The following are consequences of Propositions~\ref{ncon} and \ref{abs-con}.

\begin{proposition}\label{aobuc}In an order continuous Banach lattice, every almost order bounded uo-Cauchy net converges uo- and in norm to the same limit.\end{proposition}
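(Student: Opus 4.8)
The plan is to reduce the statement to Proposition~\ref{ncon} together with the norm completeness of $X$. Write $(x_\alpha)_{\alpha\in\Gamma}$ for the given net and pass to the difference net $(x_\alpha-x_{\alpha'})_{(\alpha,\alpha')\in\Gamma\times\Gamma}$, where $\Gamma\times\Gamma$ carries the product direction. By the definition of uo-Cauchy, this net uo-converges to $0$; the only thing to add is that it is almost order bounded, after which Proposition~\ref{ncon} does the work.

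First I would verify that the set $\{x_\alpha-x_{\alpha'}:\alpha,\alpha'\in\Gamma\}$ is almost order bounded. Given $\varepsilon>0$, choose $u\in X_+$ with $\{x_\alpha:\alpha\}\subset[-u,u]+(\varepsilon/2)B_X$, so that each $x_\alpha=y_\alpha+z_\alpha$ with $\abs{y_\alpha}\leq u$ and $\norm{z_\alpha}\leq\varepsilon/2$. Then $x_\alpha-x_{\alpha'}=(y_\alpha-y_{\alpha'})+(z_\alpha-z_{\alpha'})$ with $\abs{y_\alpha-y_{\alpha'}}\leq 2u$ and $\norm{z_\alpha-z_{\alpha'}}\leq\varepsilon$, so the difference net lies in $[-2u,2u]+\varepsilon B_X$. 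Hence Proposition~\ref{ncon} applies to $(x_\alpha-x_{\alpha'})_{(\alpha,\alpha')}$ and gives $\norm{x_\alpha-x_{\alpha'}}\to 0$ along $\Gamma\times\Gamma$; that is, $(x_\alpha)$ is norm Cauchy. Since $X$ is a Banach lattice, it is norm complete, so the net has a norm limit $x\in X$ (e.g.\ extract a cofinal sequence that is rapidly Cauchy, take its limit, and check the whole net converges to it). Finally, $(x_\alpha)$ is uo-Cauchy and norm convergent to $x$, so by Remark~\ref{trivial}\eqref{trivial2} it also uo-converges to $x$. Thus $(x_\alpha)$ converges uo- and in norm to the same limit.

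The argument is essentially bookkeeping, and I do not expect a genuine obstacle: the one point that needs a little care is confirming that the difference net is both almost order bounded \emph{and} uo-null as a net indexed by $\Gamma\times\Gamma$ (the first from the short computation above, the second being exactly the definition of uo-Cauchy), so that Proposition~\ref{ncon} is legitimately applicable; everything else is the elementary fact that norm-Cauchy nets converge in a Banach space plus Remark~\ref{trivial}.
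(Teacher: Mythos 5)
Your proof is correct and follows essentially the same route as the paper's: pass to the difference net, note it is almost order bounded and uo-null, apply Proposition~\ref{ncon} to get that $(x_\alpha)$ is norm Cauchy hence norm convergent, and finish with Remark~\ref{trivial}\eqref{trivial2}. The only difference is that you spell out the (correct) verification that the difference net is almost order bounded, which the paper leaves implicit.
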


\begin{proof}Suppose $(x_\alpha)$ is almost order bounded and uo-Cauchy. Then the net $(x_\alpha-x_{\alpha'})$ is almost order bounded and is uo-convergent to $0$. Thus, it converges to $0$ in norm by Proposition~\ref{ncon}. It follows that the net $(x_\alpha)$ is norm-Cauchy, and thus norm-convergent. Now apply Remark~\ref{trivial}\eqref{trivial2}.
\end{proof}

\begin{theorem}\label{wcuc}Let $X$ be order continuous. Then every relatively weakly compact uo-Cauchy net converges uo- and $\abs{\sigma}(X,X^*)$ to the same limit.
\end{theorem}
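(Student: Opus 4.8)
The plan is to obtain the uo-limit as a weak cluster point of the net, reaching it via the AL-representation of Subsection~\ref{AL-rep} applied inside principal bands. Since $(x_\alpha)$ is relatively weakly compact, it has a subnet $(x_{\alpha_\beta})$ with $x_{\alpha_\beta}\xrightarrow{w}x$ for some $x\in X$, and this subnet is again uo-Cauchy (a subnet of $(x_\alpha-x_{\alpha'})_{\Gamma\times\Gamma}$ is still uo-null). By Remark~\ref{trivial}\eqref{trivial1} it suffices to prove $x_{\alpha_\beta}\xrightarrow{uo}x$: then $x_\alpha\xrightarrow{uo}x$, and Proposition~\ref{abs-con} upgrades this to convergence in $\abs{\sigma}(X,X^*)$, giving both conclusions with the common limit $x$.

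To prove $x_{\alpha_\beta}\xrightarrow{uo}x$, I would fix $y\in X_+$ and pass to the principal band $B_y$ generated by $y$, with band projection $P_y$. Then $(P_yx_{\alpha_\beta})$ is uo-Cauchy in $B_y$ (Lemma~\ref{band-proj}), is relatively weakly compact in $B_y$ (a continuous linear image of a relatively weakly compact set, and $B_y$ is weakly closed in $X$), and $P_yx_{\alpha_\beta}\xrightarrow{w}P_yx$ in $B_y$. Since $B_y$ is an order continuous Banach lattice with weak unit $y$, it carries a strictly positive order continuous functional, so Subsection~\ref{AL-rep} yields an AL-space $\widetilde{B_y}\cong L_1(\mu)$ with $\mu$ finite in which $B_y$ sits as a norm-dense ideal. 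By Lemmas~\ref{uo-ideal} and~\ref{al-wcon}, in $\widetilde{B_y}$ the net $(P_yx_{\alpha_\beta})$ is still uo-Cauchy and still relatively weakly compact, hence almost order bounded there by the Dunford-Pettis theorem. Proposition~\ref{aobuc} then gives that $(P_yx_{\alpha_\beta})$ converges, both uo and in norm, to some $v\in\widetilde{B_y}$; since norm convergence implies weak convergence and $P_yx_{\alpha_\beta}\xrightarrow{w}P_yx$ also in $\widetilde{B_y}$ by Lemma~\ref{al-wcon}, uniqueness of weak limits forces $v=P_yx\in B_y$. Hence $P_yx_{\alpha_\beta}\xrightarrow{uo}P_yx$ in $\widetilde{B_y}$, hence in $B_y$, hence in $X$, by Lemma~\ref{uo-ideal}. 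Finally, from the disjoint decomposition $x_{\alpha_\beta}-x=P_y(x_{\alpha_\beta}-x)+(I-P_y)(x_{\alpha_\beta}-x)$, in which $(I-P_y)(x_{\alpha_\beta}-x)\perp y$, one gets $\abs{x_{\alpha_\beta}-x}\wedge y=\abs{P_y(x_{\alpha_\beta}-x)}\wedge y\xrightarrow{o}0$. As $y\in X_+$ was arbitrary, $x_{\alpha_\beta}\xrightarrow{uo}x$.

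The main obstacle, I expect, is that $X$ need not have a weak unit, or even a strictly positive functional, so the AL-representation is unavailable globally; the remedy is to localize to each principal band $B_y$, where a weak unit is automatic, and to use the single fixed weak cluster point $x\in X$ of the original net to pin the local uo-limit of $(P_yx_{\alpha_\beta})$ down to $P_yx$ for all $y$ simultaneously --- this consistency is exactly what lets the local limits glue into the global element $x$. The remaining points are routine: that ``relatively weakly compact'' and ``uo-Cauchy'' survive passage to a subnet, application of a band projection, and transfer to the AL-completion (handled by Lemma~\ref{band-proj}, Lemmas~\ref{uo-ideal} and~\ref{al-wcon}, and elementary weak-topology arguments), together with the two lattice identities at the end, which follow from disjointness.
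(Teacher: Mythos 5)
Your proposal is correct and follows essentially the same route as the paper: pass to a weakly convergent subnet to get a candidate limit, localize to the principal band $B_y$ (which has a weak unit, hence a strictly positive functional), transfer to the AL-completion where relative weak compactness gives almost order boundedness, apply Proposition~\ref{aobuc}, identify the limit with $P_yx$ via weak convergence, and glue via $\abs{x_\alpha-x}\wedge y=\abs{P_yx_\alpha-P_yx}\wedge y$. The only cosmetic difference is that you run the argument on the subnet and invoke Remark~\ref{trivial}\eqref{trivial1}, whereas the paper applies the AL-space argument to the full net directly.
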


\begin{proof}Suppose $(x_\alpha)$ is relatively weakly compact and uo-Cauchy. Then it has a subnet $(x_\beta)$ weakly convergent to some $x\in X$. In view of Proposition~\ref{abs-con}, it suffices to prove $x_\alpha\xrightarrow{uo}x$.\par

Assume first that $X$ has a weak unit. Then $X^*$ has a strictly positive functional $x_0^*$ (cf.~\cite[Theorem 4.15]{ali}). Let $\widetilde{X}$ be the AL-space constructed for the pair $(X,x_0^*)$ as in Subsection~\ref{AL-rep}. By Lemma~\ref{al-wcon}, $(x_\alpha)$ is relatively weakly compact and thus almost order bounded in $\widetilde{X}$. By Lemma~\ref{uo-equiv}, $(x_\alpha)$ is also uo-Cauchy in $\widetilde{X}$. Therefore, $x_\alpha\xrightarrow[uo]{\norm{\cdot}_L}y$ for some $y\in \widetilde{X}$ by Proposition~\ref{aobuc}.
By Lemma~\ref{al-wcon} we have that $x_\beta\xrightarrow{{w}} x$ in $\widetilde{X}$, hence $y=x$. It follows that $x_\alpha\xrightarrow{uo}x$ in $\widetilde{X}$, and therefore, in $X$, by Lemma~\ref{uo-equiv} again.\par
The general case. Fix any $y>0$. Let $B$ be the band generated by $y$ in $X$ and $P$ the band projection onto $B$. Then $B$ is an order continuous Banach lattice with a weak unit. Clearly, $(Px_\alpha)$ is relatively weakly compact in $B$ and $Px_\beta\xrightarrow{w}Px$ in $B$. By Lemma~\ref{band-proj}, $(Px_\alpha)$ is uo-Cauchy in $B$. Thus the preceding case implies $Px_\alpha\xrightarrow{uo}Px$ in $B$. In particular, $\abs{x_\alpha-x}\wedge y=P(\abs{x_\alpha-x}\wedge y)=\abs{Px_\alpha-Px}\wedge y\xrightarrow{o}0$ in $B$, and therefore, in $X$ by the remark preceding Lemma~\ref{band-proj}.
\end{proof}

As in Remark~\ref{follow-ncon}, we can not expect norm convergence in this theorem.

\begin{remark}\label{c0exa} Note that, in general, we can not replace weak compactness with norm boundedness to obtain uo-convergence in this theorem. Indeed, let $X=c_0$, and put $x_n=\sum_1^ne_k$, where $(e_n)$ is the standard basis. Then $(x_n)$ is uo-Cauchy and norm bounded, but it is not uo-convergent in $c_0$.\end{remark}

In fact, the following theorem shows that $c_0$ is the only case that we need to avoid. We need the following sublattice version of Lemma~\ref{uo-ideal}.

\begin{lemma}\label{c0-case}Let $X$ be an order complete Banach lattice and $Y$ a closed sublattice of $X$. Suppose $Y$ is order continuous in its own right.
\begin{enumerate}
\item\label{c0-case1} For every order bounded above subset of $Y$, its supremum in $Y$ equals its supremum in $X$.
\item\label{c0-case2} For an order bounded net $(x_\alpha)\subset Y$, $x_\alpha\xrightarrow{o}0$ in $Y$ iff $x_\alpha\xrightarrow{o}0$ in $X$.
\item\label{c0-case3} For any net $(x_\alpha)\subset Y$, $x_\alpha\xrightarrow{uo}0$ in $Y$ iff $x_\alpha\xrightarrow{uo}0$ in $X$.
\end{enumerate}
\end{lemma}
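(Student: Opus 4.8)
The plan is to prove the three parts in order, since each feeds into the next, using the order-continuity of $Y$ to transfer order and uo-convergence across the embedding $Y\hookrightarrow X$.

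For \eqref{c0-case1}, suppose $A\subset Y$ is bounded above in $Y$; let $s$ be its supremum in $Y$. Certainly $s$ is an upper bound for $A$ in $X$, so it suffices to show $s$ is the \emph{least} upper bound in $X$. The net of finite suprema of elements of $A$ is directed upward to $s$ in $Y$, and since $Y$ is order continuous this net converges to $s$ in norm. Because $Y$ is a \emph{closed} sublattice of $X$, this is also a norm-convergent increasing net in $X$ with norm limit $s$; hence its supremum in $X$, if it exists, is $s$. Concretely, if $w\in X$ is any upper bound for $A$, then $w$ dominates every finite supremum, and passing to the norm limit (using that order intervals, or rather the positivity of $w-\cdot$, are norm closed) gives $w\geq s$. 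So $s=\sup_X A$.

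For \eqref{c0-case2}, the ``only if'' direction is routine: an order null net in $Y$ is dominated by a net decreasing to $0$ in $Y$, and by \eqref{c0-case1} that dominating net still decreases to $0$ in $X$ (its infimum in $X$ is the same as in $Y$, applying \eqref{c0-case1} to negatives). For the ``if'' direction, let $(x_\alpha)\subset Y$ be order bounded, say $|x_\alpha|\le u\in Y_+$, with $x_\alpha\xrightarrow{o}0$ in $X$. Then in $X$, $\inf_\alpha\sup_{\beta\ge\alpha}|x_\beta|=0$. But $\sup_{\beta\ge\alpha}|x_\beta|$ is computed over a subset of $[0,u]\subset Y$, so by \eqref{c0-case1} it equals the supremum $v_\alpha$ taken in $Y$, and $(v_\alpha)$ is a decreasing net in $Y$ with $\inf_X v_\alpha=0$; applying \eqref{c0-case1} once more (to $\{v_\alpha\}$, which is bounded below by $0$ in $Y$) shows $\inf_Y v_\alpha=0$ as well. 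Hence $x_\alpha\xrightarrow{o}0$ in $Y$.

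For \eqref{c0-case3}, the argument mirrors the proof of Lemma~\ref{uo-ideal}, now using \eqref{c0-case2} in place of the ideal-version remark, but with one extra wrinkle: $Y$ need not be an ideal, so $|x_\alpha|\wedge y$ for $y\in X_+$ need not lie in $Y$. The clean way around this: for $y\in Y_+$, the truncations $|x_\alpha|\wedge y$ \emph{do} lie in $Y$ and are order bounded in $Y$, so $x_\alpha\xrightarrow{uo}0$ in $Y$ iff $|x_\alpha|\wedge y\xrightarrow{o}0$ in $Y$ (for all $y\in Y_+$) iff $|x_\alpha|\wedge y\xrightarrow{o}0$ in $X$ (for all $y\in Y_+$) by \eqref{c0-case2}. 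It then remains to upgrade from test vectors $y\in Y_+$ to all $y\in X_+$. For this I would pass to $\widetilde{Y}$: since $Y$ is order continuous it has a weak unit on each of its bands, or more directly, it suffices to handle the case where $Y$ has a weak unit $e$ (reduce via band projections in $Y$ as in the last paragraph of the proof of Theorem~\ref{wcuc}), and then $|x_\alpha|\wedge e\xrightarrow{o}0$ in $X$ together with the fact that $e$ is a weak unit of the band it generates in $X$ — via Lemma~\ref{uoc} applied in that band — forces $x_\alpha\xrightarrow{uo}0$ in $X$. The main obstacle is exactly this last step: $e$ is a weak unit of $Y$ but a priori only a weak unit of the band $B_e$ it generates in $X$, not of $X$ itself, so one must first restrict attention to $B_e$ (using Lemma~\ref{band-proj} and that band projections commute with the truncations) and only there invoke Lemma~\ref{uoc}; keeping track of which lattice each supremum/infimum is computed in, and repeatedly citing \eqref{c0-case1}, is where the care is needed.
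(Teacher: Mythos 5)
Your proofs of \eqref{c0-case1} and \eqref{c0-case2}, and the ``uo in $X$ implies uo in $Y$'' half of \eqref{c0-case3}, are correct and essentially identical to the paper's: pass to the upward-directed net of finite suprema, use order continuity of $Y$ to get norm convergence and norm-closedness of the positive cone to identify the supremum in $X$, and then transfer the suprema $\sup_{\beta\geq\alpha}\abs{x_\beta}$ between $Y$ and $X$ via \eqref{c0-case1}.

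The gap is in the remaining direction of \eqref{c0-case3}, exactly at the step you call the main obstacle. The reduction you cite does not transfer from Theorem~\ref{wcuc}: there one fixes a test vector $w\in X_+$ and projects onto the band of $X$ generated by $w$, which is harmless because the net is arbitrary in $X$; here the net lives in $Y$, and the band projection $P_w$ of $X$ pushes $x_\alpha$ out of $Y$, so the hypothesis that the net lies in an order continuous closed sublattice is lost. If instead you project along bands of $Y$ (onto the band of $Y$ generated by some $y\in Y_+$, which does have a weak unit, with band projection $Q_y$ computed in $Y$), then knowing $Q_yx_\alpha\xrightarrow{uo}0$ in $X$ for every such $y$ does not reassemble into $x_\alpha\xrightarrow{uo}0$ in $X$: against a test vector $w\in X_+$ you are left with $\abs{x_\alpha-Q_yx_\alpha}\wedge w$, which you cannot control uniformly in $\alpha$. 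Moreover, even in the weak-unit case your sketch only delivers uo-convergence in the band $B_e$ generated by $e$ in $X$ (that is all Lemma~\ref{uoc} can give, since $e$ is not a weak unit of $X$); passing from $B_e$ to $X$ already requires Lemma~\ref{uo-ideal}. What actually closes the argument --- and is the paper's route --- is to forget weak units entirely: let $I$ be the ideal generated by $Y$ in $X$; every $x\in I_+$ is dominated by some $y\in Y_+$, so $\abs{x_\alpha}\wedge x\leq\abs{x_\alpha}\wedge y\xrightarrow{o}0$ gives $x_\alpha\xrightarrow{uo}0$ in $I$ directly from your $Y_+$-test step, and Lemma~\ref{uo-ideal} then upgrades this to $X$. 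The mechanism inside Lemma~\ref{uo-ideal} that your sketch is missing is a disjointness computation: $z:=\inf_\alpha\sup_{\beta\geq\alpha}\left(\abs{x_\beta}\wedge w\right)$ exists by order completeness of $X$, satisfies $z\wedge y=0$ for all $y\in Y_+$ (this needs no weak unit), and lies in the band generated by $Y$; hence $z\in Y^{\rm d}\cap Y^{\rm dd}=\{0\}$.
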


\begin{proof}
For \eqref{c0-case1}, without loss of generality, assume $A=(x_\alpha)\subset Y$ with $x_\alpha\uparrow$. Let $x$ be its supremum in $Y$. Then by order continuity of $Y$, $x_\alpha\rightarrow x$ in norm of $Y$, and thus in norm of $X$. Since $x_\alpha$ is increasing, $x$ is its supremum in $X$.\par

\eqref{c0-case2} Suppose $x_\alpha\xrightarrow{o} 0$ in $Y$. Since $Y$ is order complete, the net $(y_\alpha)\subset Y$, where $y_\alpha:=\sup_{\beta\geq \alpha}\abs{x_\beta}$, is well defined and decreases to $0$ in $Y$. By \eqref{c0-case1}, $y_\alpha\downarrow0$ in $X$. Since $\abs{x_\alpha}\leq y_\alpha$, we have $x_\alpha\xrightarrow{o}0$ in $X$.
Conversely, suppose $x_\alpha\xrightarrow{o} 0$ in $X$. Since $X$ is order complete, the net $y_\alpha:=\sup_{\beta\geq \alpha}\abs{x_\beta}$ is well defined and decreases to $0$ in $X$. By \eqref{c0-case1}, $y_\alpha\in Y$. It is clear that $y_\alpha\downarrow 0$ in $Y$, thus $x_\alpha \xrightarrow{o}0$ in $Y$.\par

\eqref{c0-case3} Suppose $x_\alpha\xrightarrow{uo}0$ in $X$. Then for any $0<y\in Y$, $\abs{x_\alpha}\wedge y\xrightarrow{o}0$ in $X$, and thus in $Y$, by \eqref{c0-case2}. It follows that $x_\alpha\xrightarrow{uo}0$ in $Y$.
Conversely, suppose now $x_\alpha\xrightarrow{uo}0$ in $Y$. Let $I$ be the ideal generated by $Y$ in $X$. Take any $x\in I_+$. Then $x\leq y$ for some $y\in Y_+$. Note that $\abs{x_\alpha}\wedge y\xrightarrow{o}0$ in $Y$, and thus in $X$ by \eqref{c0-case2}, and in $I$ by the remark preceding Lemma~\ref{band-proj}. Therefore, $x_\alpha\xrightarrow{uo}0$ in $I$, and thus in $X$ by Lemma~\ref{uo-ideal}.
\end{proof}

\begin{remark}
It is enough to assume $X$ is $\sigma$-order complete in Lemma~\ref{c0-case}, if we only consider countable sets and nets with countable index sets.
\end{remark}

\begin{theorem}\label{cha} Let $X$ be an order continuous Banach lattice. The following are equivalent:
\begin{enumerate}
\item\label{cha1} $X$ is a KB-space;
\item\label{cha2} every norm bounded uo-Cauchy net in $X$ is uo-convergent;
\item\label{cha3} every norm bounded uo-Cauchy sequence in $X$ is uo-convergent.
\end{enumerate}
\end{theorem}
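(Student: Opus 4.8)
The implication \eqref{cha2}$\Rightarrow$\eqref{cha3} is trivial (a sequence is a net), so I would prove \eqref{cha3}$\Rightarrow$\eqref{cha1} and \eqref{cha1}$\Rightarrow$\eqref{cha2}. For the first I argue by contraposition. If $X$ is order continuous but not a KB-space, then it contains a closed sublattice $Y$ lattice isomorphic to $c_0$ (by the characterization recalled in Section~2, cf.~\cite[Theorem~4.60]{ali}); fix a lattice isomorphism $T\colon c_0\to Y$, which is automatically a topological isomorphism since $T$ and $T^{-1}$ are positive, hence bounded. Put $e_k'=Te_k$ and $f_n=\sum_{k=1}^n e_k'=T\bigl(\sum_{k=1}^n e_k\bigr)$. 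Then $(f_n)$ is norm bounded in $X$ and increasing; moreover $\bigl(\sum_{k=1}^n e_k\bigr)$ is coordinate-wise Cauchy, hence uo-Cauchy in $c_0$, and bijective lattice homomorphisms preserve uo-convergence, so $(f_n)$ is uo-Cauchy in $Y$, and therefore in $X$ by Lemma~\ref{c0-case}\eqref{c0-case3}. If $(f_n)$ were uo-convergent in $X$, say to $x$, then, being increasing, $f_n\uparrow x$ by Lemma~\ref{comparison}\eqref{comp2}, so $x-f_n\downarrow 0$ and order continuity of $X$ would give $\norm{f_n-x}\to 0$; but $\norm{f_{n+1}-f_n}=\norm{e_{n+1}'}\geq\norm{T^{-1}}^{-1}>0$, a contradiction. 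Thus $(f_n)$ is a norm bounded uo-Cauchy sequence in $X$ that is not uo-convergent, i.e.\ the negation of \eqref{cha3}.

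For \eqref{cha1}$\Rightarrow$\eqref{cha2} I would first treat the case in which $X$ is a KB-space with a weak unit $x_0$, working entirely inside $X$. Given a norm bounded uo-Cauchy net $(x_\alpha)$, the nets $(x_\alpha^+)$ and $(x_\alpha^-)$ are norm bounded, say by $M$, and uo-Cauchy (since $\abs{x_\alpha^\pm-x_{\alpha'}^\pm}\leq\abs{x_\alpha-x_{\alpha'}}$), so by Lemma~\ref{comparison}\eqref{comp0} it is enough to show that a positive norm bounded uo-Cauchy net $(y_\alpha)$ is uo-convergent. For each $n$ the net $(y_\alpha\wedge nx_0)$ is order bounded and uo-Cauchy, hence o-convergent by order completeness to some $u_n\in X_+$; the $u_n$ increase and satisfy $\norm{u_n}\leq M$ by Lemma~\ref{norm}, so the KB-property yields $u_n\uparrow u$ with $\norm{u_n-u}\to 0$ for some $u\in X_+$. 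It then remains to show $y_\alpha\xrightarrow{uo}u$, which by Lemma~\ref{uoc} means $\abs{y_\alpha-u}\wedge x_0\xrightarrow{o}0$. The engine of the argument is the following inequality, valid for each $n$:
\[
\abs{y_\alpha-u}\wedge x_0\;\leq\;(y_\alpha-nx_0)^+\wedge x_0\;+\;\abs{y_\alpha\wedge nx_0-u_n}\wedge x_0\;+\;(u-u_n)\wedge x_0 .
\]
Here the middle term o-converges to $0$ in $\alpha$, the last term decreases to $0$ in $n$, and for the first term the elementary vector-lattice inequality $(y_\alpha-nx_0)^+\wedge x_0\leq\frac{1}{n}\,y_\alpha$, together with order continuity, forces its o-limit $p_n$ (which exists, the net being order bounded and uo-Cauchy) to satisfy $\norm{p_n}\leq M/n$, whence $p_n\downarrow 0$. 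Taking $\sup_{\beta\geq\alpha}$ and then $\inf_\alpha$ of both sides, and using that the infimum of a sum of two decreasing nets over a common index set equals the sum of the infima, one gets $\inf_\alpha\sup_{\beta\geq\alpha}\bigl(\abs{y_\beta-u}\wedge x_0\bigr)\leq\inf_n\bigl(p_n+(u-u_n)\wedge x_0\bigr)=0$. Hence $x_\alpha^+\xrightarrow{uo}u$ and likewise $x_\alpha^-\xrightarrow{uo}v$ for some $v$, so $x_\alpha\xrightarrow{uo}u-v$.

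For a general KB-space $X$ and a norm bounded uo-Cauchy net $(x_\alpha)$, I would reduce to the previous case band by band. Fix $y\in X_+$ and let $B_y$ be the band generated by $y$, with band projection $P_y$. Then $B_y$ is an order continuous Banach lattice with weak unit $y$, and it is a KB-space (a norm bounded increasing sequence in $B_y$ converges in norm in $X$, with limit in the closed band $B_y$); by Lemma~\ref{band-proj}, $(P_yx_\alpha)$ is norm bounded and uo-Cauchy in $B_y$, so $P_yx_\alpha\xrightarrow{uo}z_y$ in $B_y$ by the case above (set $z_0=0$). For $0\leq y_1\leq y_2$ one has $P_{y_1}=P_{y_1}P_{y_2}$, so Lemma~\ref{band-proj} and uniqueness of uo-limits give $z_{y_1}=P_{y_1}z_{y_2}$, and hence $z_{y_1}^\pm=P_{y_1}z_{y_2}^\pm$. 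Thus the nets $(z_y^+)_{y\in X_+}$ and $(z_y^-)_{y\in X_+}$ are increasing and norm bounded by $M$, and the KB-property gives $z_y^\pm\uparrow w^\pm$ in norm for some $w^\pm\in X_+$. Put $x=w^+-w^-$. Since band projections are order continuous and $\{y'\geq y\}$ is cofinal in $X_+$, one obtains $P_yw^\pm=\sup_{y'}P_yz_{y'}^\pm=z_y^\pm$, hence $P_yx=z_y$, so $\abs{x_\alpha-x}\wedge y=\abs{P_yx_\alpha-z_y}\wedge y\xrightarrow{o}0$ in $B_y$, and therefore in $X$. As $y\in X_+$ was arbitrary, $x_\alpha\xrightarrow{uo}x$.

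The delicate point is the weak-unit case of \eqref{cha1}$\Rightarrow$\eqref{cha2}, specifically the control of the truncation tails $(y_\alpha-nx_0)^+\wedge x_0$: the KB-hypothesis is used precisely to turn the uniform norm bound $\norm{(y_\alpha-nx_0)^+\wedge x_0}\leq M/n$ into the order statement $p_n\downarrow 0$, which is what makes the $\inf$--$\sup$ bookkeeping collapse. Once that case is settled, the reduction to positive nets, the band-by-band reduction, and the gluing of the family $(z_y)$ are routine, and \eqref{cha3}$\Rightarrow$\eqref{cha1} is short.
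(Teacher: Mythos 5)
Your proof is correct, and while it follows the same skeleton as the paper's (contraposition through a lattice copy of $c_0$ for \eqref{cha3}$\Rightarrow$\eqref{cha1}; reduction to positive nets, truncation by multiples of a weak unit plus the KB-property in the weak-unit case of \eqref{cha1}$\Rightarrow$\eqref{cha2}; band-by-band gluing in general), each step is executed differently, and in two places your route is genuinely leaner. For \eqref{cha3}$\Rightarrow$\eqref{cha1}, the paper assumes the partial-sum sequence uo-converges to some $x\in X$ and then works to show $x$ must already lie in $c_0$ (via the band generated by a weak unit of $c_0$ and two uses of order continuity), contradicting non-uo-convergence \emph{in} $c_0$; you instead observe that an increasing uo-convergent sequence in an order continuous space must converge in norm (since $f_n\uparrow x$), which collides immediately with $\norm{f_{n+1}-f_n}\geq\norm{T^{-1}}^{-1}$ --- shorter, and it only needs Lemma~\ref{c0-case}\eqref{c0-case3} once. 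In the weak-unit case the paper reaches $\abs{x_\beta-y}\wedge x_0\leq x_{\alpha,\alpha'}$ by two successive norm-limit passages (first in $\beta'$, then in $k$, using that $x_0$ is a quasi-interior point); your three-term splitting with the tail bound $(y_\alpha-nx_0)^+\wedge x_0\leq\frac{1}{n}y_\alpha$ trades those norm limits for the $\inf$--$\sup$ bookkeeping, which is sound because all the nets involved are decreasing over directed index sets (so infima do add). In the general case the paper fixes a maximal pairwise disjoint system and directs the finite subsets, whereas you direct the principal bands by $X_+$ itself; this avoids Zorn's lemma, and your compatibility relation $z_{y_1}=P_{y_1}z_{y_2}$ plays exactly the role of the paper's identity $\abs{z_\delta-P_\delta y_0}\wedge y=0$. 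The one hypothesis you should make explicit is that $B_y$ is itself a KB-space with weak unit $y$ (needed to invoke the weak-unit case), but as you note this is immediate since bands are closed and norm limits of increasing sequences in $B_y$ stay in $B_y$.
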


\begin{proof}
\eqref{cha2}$\Rightarrow$\eqref{cha3} is obvious. We now prove \eqref{cha3}$\Rightarrow$\eqref{cha1}. Assume \eqref{cha3} holds but $X$ is not a KB-space. Then $X$ contains a lattice copy of $c_0$. Without loss of generality, we assume $c_0\subset X$.
Recall from Remark~\ref{c0exa} that $x_n=\sum_1^ne_i$ is uo-Cauchy in $c_0$ but not uo-convergent in $c_0$ and it satisfies $\sup_n\norm{x_n}<\infty$. By Lemma~\ref{c0-case}\eqref{c0-case3}, $(x_n)$ is also uo-Cauchy in $X$, and thus uo-converges to some $x\in X_+$ by assumption \eqref{cha3}. Let $u$ be a weak unit of $c_0$, $B$ the band generated by $u$ in $X$ and $P_B$ the corresponding band projection. Then by Lemma ~\ref{band-proj}, we have that $x_n=P_Bx_n\xrightarrow{{uo}}P_Bx$ in $X$, thus $x=P_Bx$. Moreover, we have, for all $k\geq 1$, $|x_n\wedge ku-x\wedge ku|\leq |x_n-x|\wedge ku\rightarrow 0$ in order of $X$, hence $x_n\wedge ku$ converges to $x\wedge ku$ in norm of $X$ by order continuity of $X$. Since $(x_n\wedge ku)_n\subset c_0$, we have $x\wedge ku\in c_0$ for all $k\geq 1$, and therefore, $x=P_Bx=\lim_kx\wedge ku\in c_0$ by order continuity of $X$ again. Hence, by Lemma~\ref{c0-case}\eqref{c0-case3}, we have that $x_n$ uo-converges in $c_0$ to $x\in c_0$, a contradiction.\par

\eqref{cha1}$\Rightarrow$\eqref{cha2}. Suppose that $(x_\alpha)$ is uo-Cauchy and norm bounded. From $\abs{x^\pm-y^\pm}\leq \abs{x-y}$, we know  that $(x_\alpha^\pm)$ are both uo-Cauchy. Thus, without loss of generality, we assume that the net $(x_\alpha)$ consists of positive elements (cf.~Lemma~\ref{comparison}).\par

Assume first that $X$ has a weak unit $x_0$. Fix $k \in \mathbb{N}$.
Note that $|x_\alpha\wedge k x_0-x_{\alpha'}\wedge kx_0|\leq |x_\alpha-x_{\alpha'}|\wedge kx_0$. Hence, $(x_\alpha\wedge kx_0)$ is order Cauchy. Since it is order bounded, it converges in order (and thus in norm) to some $y_k\in X$. It is clear that $\sup_k||y_k||\leq \sup_{k}\sup_\alpha\norm{x_\alpha\wedge kx_0}\leq \sup_\alpha\norm{x_\alpha}<\infty$. Note also that $\{y_k\}_k$ is increasing. Thus $y_k$ converges to some $y\in X$.\par

It remains to prove $x_\alpha\xrightarrow{uo}y$, or equivalently, $|x_\alpha-y|\wedge x_0\xrightarrow{o}0$ by Lemma~\ref{uoc}.
Put $x_{\alpha,\alpha'}=\sup_{\beta\geq \alpha,\beta'\geq \alpha'}|x_\beta-x_{\beta'}|\wedge x_0$. Then by assumption, $x_{\alpha,\alpha'}\downarrow 0$. Now for any $k\geq 1$, we have
$$|x_\beta\wedge kx_0-x_{\beta'}\wedge kx_0|\wedge x_0\leq |x_\beta-x_{\beta'}|\wedge x_0\leq x_{\alpha,\alpha'},\;\;\forall\beta\geq \alpha,\beta'\geq\alpha'.$$
Taking limit in $\beta'$, we have, by the continuity of lattice operations with respect to norm, for any $\beta\geq \alpha$,
$$|x_\beta\wedge kx_0-y_k|\wedge x_0\leq x_{\alpha,\alpha'},\;\;\forall\;k\geq 1.$$
Now letting $k\rightarrow \infty$ and using continuity of lattice operations again, we have
$$|x_\beta-y|\wedge x_0\leq x_{\alpha,\alpha'},\;\;\forall\;\beta\geq \alpha,$$
from which it follows that $|x_\alpha-y|\wedge x_0\xrightarrow{o}0$.\par

The general case. Let $B_\delta$ and $P_\delta$ be as in the proof of Theorem~\ref{spsp}. Then each $B_\delta$ is a KB-space with a weak unit. Since $X$ is order continuous, we have that, for each $x \in X$, $P_\delta x \rightarrow x$ in norm. By Lemma~\ref{band-proj}, $(P_\delta x_\alpha)$ is uo-Cauchy in $B_\delta$. The preceding case implies that there exists $0\leq z_\delta\in B_\delta$ such that $P_\delta x_\alpha\xrightarrow{uo}z_\delta$ in $B_\delta$, and hence in $X$, by Lemma~\ref{uo-ideal}.
Note that the net $(z_\delta)$ is increasing by Lemma~\ref{comparison} and $\sup_\delta\norm{z_\delta}\leq \sup_\alpha\norm{x_\alpha}<\infty$ by Lemma~\ref{norm}. Thus it converges to some $0\leq x\in X$.\par

It remains to show that $x_\alpha\xrightarrow{uo}x $. Pick any $y\in X_+$. Let $P_y$ be the band projection onto $B_y$. Then a similar argument as before shows that $P_yx_\alpha$ uo-converges to some $0\leq y_0\in B_{y}$ in $X$. We have

\begin{equation}\label{eq00}
\abs{x_\alpha-y_0}\wedge y=P_y(\abs{x_\alpha-y_0}\wedge y)=\abs{P_yx_\alpha-y_0}\wedge y\xrightarrow{o}0 \text{ in  }X.
\end{equation}
Thus for any $\delta$, $\bigabs{P_\delta x_\alpha-P_\delta y_0}\wedge y\leq |x_\alpha-y_0|\wedge y\xrightarrow{o}0$ in $X$.
Recall that $P_\delta x_\alpha\xrightarrow{uo}z_\delta$ in $X$. Thus, $\bigabs{P_\delta x_\alpha-z_\delta}\wedge y\xrightarrow{o}0$ in $X$. We have \begin{align}\label{mid-cite}\abs{z_\delta-P_\delta y_0}\wedge y=0.\end{align}Since $\lim P_\delta y_0=y_0$ and $\lim z_\delta=x$, taking limit in $\delta$ in \eqref{mid-cite}, we have $0=\abs{x-y_0}\wedge y=\bigabs{P_yx-y_0}\wedge y$. It follows that $y_0=P_yx$ and $\abs{x_\alpha-x}\wedge y=\bigabs{P_yx_\alpha -P_yx}\wedge y=\bigabs{P_yx_\alpha -y_0}\wedge y\xrightarrow{o}0$ in $X$ by (\ref{eq00}).
\end{proof}

In the classical $L_1$ case, Fatou's lemma says that, if $f_n$ converges almost everywhere to a measurable function $f$, then $\norm{f}_1\leq\liminf_n\norm{f_n}$. In particular, if $\{f_n\}$ is bounded, then $f\in L_1$. In view of this, the preceding theorem may be viewed as \textbf{Abstract Fatou Lemma} (cf.~Lemma~\ref{norm}).\par

\begin{remark}
We can not replace order continuity of $X$ with order completeness in the assumption of this theorem. Indeed, it is easily seen that in $\ell_\infty$, every norm bounded uo-Cauchy net is also order Cauchy, and thus is order convergent, since $\ell_\infty$ is order complete. Nevertheless, $\ell_\infty$ is not a KB-space.
\end{remark}

\begin{remark}
Suppose that $X$ is an order continuous Banach lattice which is realized as a K\"{o}the space within some $L_1(\mu)$ space. Theorem \ref{cha} says, in particular, that $X$
is a KB-space iff given any bounded sequence $(x_n)$ in $X$ which converges
a.e.~to some measurable function x, then $x \in X$. This gives a nice
characterization of KB-spaces among order continuous K\"{o}the spaces
in terms of almost everywhere convergence.
\end{remark}

We mention here that many results in this section and the previous one can be easily extended to \textbf{unbounded norm convergence}. We say that a net $(x_\alpha)$ is unbounded norm convergent to $x$ if $|x_\alpha-x| \wedge y \xrightarrow{{||\cdot||}} 0$ for all $y \in X_+$. This type of convergence is introduced in \cite{tro04} and is an analogue of convergence in measure in $L_1(\mu)$ spaces.

\section{Convergence of Submartingales}\label{martingale}

The classical Doob's (sub-)martingale convergence theorem in a probability $L_1(\mu)$-space is as follows (cf.~\cite[Theorem~9.4.4]{chu74}).
\begin{theorem}[Doob]\label{doob}
Every norm bounded submartingale in $L_1(\mu)$ converges almost surely.
\end{theorem}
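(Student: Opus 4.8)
The plan is to follow the standard proof via Doob's upcrossing inequality; this is a classical theorem, and I would simply present the classical argument in the present notation. Let $(X_n)_{n\ge 1}$ be a submartingale with respect to a filtration $(\mathcal F_n)$ on the probability space $(\Omega,\mathcal F,\mu)$, with $M:=\sup_n\|X_n\|_1<\infty$; we must produce a $\mu$-a.e.\ finite limit for $(X_n)$. Fix rationals $a<b$ and let $U_n[a,b]$ denote the number of upcrossings of $[a,b]$ performed by $X_1,\dots,X_n$. The crux is the estimate
\[
(b-a)\int_\Omega U_n[a,b]\,d\mu\ \le\ \int_\Omega (X_n-a)^+\,d\mu\ \le\ |a|+M.
\]

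To prove it, note first that $t\mapsto(t-a)^+$ is convex and nondecreasing, so $\big((X_n-a)^+\big)_n$ is again a submartingale; replacing $X_n$ by $(X_n-a)^+$ and $b$ by $b-a$ reduces us to the case $a=0$, $b>0$, $X_n\ge 0$. Define inductively stopping times $\sigma_1\le\tau_1\le\sigma_2\le\tau_2\le\cdots$ recording alternately the successive times the process returns to $\{0\}$ and then reaches $[b,\infty)$, and set $H_j:=\mathbf 1_{\{\sigma_k<j\le\tau_k\text{ for some }k\}}$; then $H$ is predictable (each $\{H_j=1\}\in\mathcal F_{j-1}$) and $\{0,1\}$-valued. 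With the convention $X_0:=X_1$, one checks pointwise by telescoping over the maximal runs of $H=1$ that $\sum_{j=1}^nH_j(X_j-X_{j-1})\ge b\,U_n[0,b]$ (each completed upcrossing contributes at least $b$, an incomplete one at least $0$). Since $1-H$ is nonnegative and predictable, conditioning on $\mathcal F_{j-1}$ gives $\int_\Omega(1-H_j)(X_j-X_{j-1})\,d\mu=\int_\Omega(1-H_j)\,E\!\left(X_j-X_{j-1}\mid\mathcal F_{j-1}\right)d\mu\ge 0$ for each $j$, hence, summing, $\int_\Omega\sum_{j=1}^nH_j(X_j-X_{j-1})\,d\mu\le\int_\Omega(X_n-X_1)\,d\mu\le\int_\Omega X_n\,d\mu$. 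Undoing the reduction yields $(b-a)\int_\Omega U_n[a,b]\,d\mu\le\int_\Omega(X_n-a)^+\,d\mu$, and $\int_\Omega(X_n-a)^+\,d\mu\le\|X_n\|_1+|a|\le M+|a|$.

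Now let $n\to\infty$. By monotone convergence, $U_\infty[a,b]:=\lim_nU_n[a,b]$ satisfies $(b-a)\int_\Omega U_\infty[a,b]\,d\mu\le|a|+M<\infty$, so $U_\infty[a,b]<\infty$ $\mu$-a.e. Taking the union over the countably many rational pairs $a<b$, we obtain a $\mu$-null set off which $(X_n(\omega))_n$ upcrosses each such interval only finitely often; this excludes $\liminf_nX_n(\omega)<\limsup_nX_n(\omega)$, so $X_n(\omega)$ converges in $[-\infty,+\infty]$ a.e., say to $X_\infty(\omega)$. Finally, Fatou's lemma gives $\int_\Omega|X_\infty|\,d\mu\le\liminf_n\int_\Omega|X_n|\,d\mu\le M<\infty$, so $X_\infty$ is finite a.e., and $X_n\to X_\infty$ $\mu$-a.e.

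I expect the upcrossing inequality to be the only real obstacle; everything after it is routine measure theory. Within that step, the care needed is (i) the stopping times $\sigma_k,\tau_k$ may be $+\infty$, which is why one works with the finite sums up to $n$ (equivalently, with the truncations $\sigma_k\wedge n$, $\tau_k\wedge n$), and (ii) the predictability $\{H_j=1\}\in\mathcal F_{j-1}$, which is exactly what makes the one-step bound $E\!\left((1-H_j)(X_j-X_{j-1})\mid\mathcal F_{j-1}\right)\ge 0$ valid and thereby lets us avoid invoking the optional sampling theorem.
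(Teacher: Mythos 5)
Your proof is correct: it is the classical upcrossing-inequality argument, which is precisely the proof behind the reference the paper gives for this statement (the paper quotes Doob's theorem from Chung's book without reproving it). The only cosmetic caveat is that you work on a probability space while the statement says $L_1(\mu)$; since a submartingale presupposes conditional expectations, this is the intended setting (and the one the paper actually uses, namely $L_1(P)$ in the proof of Theorem~\ref{pcon1}), so nothing is lost.
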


In this section, we extend it to a measure-free setting. We refer the reader to \cite{chu74,meyer} for Doob's classical theorems and to \cite{dem66,tro11,koro08,kuo04,kuo05,kuo06,sto,tro05,uhl71} for various efforts of measure-free extensions in the history.

Let's recall some basic definitions. Let $X$ be a vector lattice. A \term{filtration} $(E_n)$ on $X$ is a sequence of positive projections on $X$ such that $E_nE_m=E_mE_n=E_{m\wedge n}$ for all $m,n\geq 1$. We say that the filtration $(E_n)$ is \term{bounded} if $X$ is a Banach lattice and $\sup_{n}\norm{E_n}<\infty$. Recall also that a sequence $(x_n)\subset X$ is called a \term{martingale} relative to the filtration $(E_n)$ if $E_nx_m=x_n$ for all $m\geq n$, and a sequence $(z_n)\subset X$ is called a \term{submartingale} relative to $(E_n)$ if $z_n\in\mathrm{Range}(E_n)$ and $E_nz_m\geq z_n$ for all $m\geq n$.

\subsection{Abstract bistochastic filtrations}
In view of \cite[Definition 5.49]{abr}, we say that a filtration $(E_n)$ on a vector lattice $X$  is \textbf{abstract bistochastic} if there exist a weak unit $x_0>0$ in $X$ and a strictly positive order continuous functional $x_0^*>0$ on $X$ such that the following double condition is satisfied by $E_1$ (and hence by all $E_n$'s):
$$E_1x_0=x_0;\quad E_1^*x_0^*=x_0^*.\leqno(\diamond) $$
To our knowledge, these conditions were first considered in the setting of abstract martingales by \cite{kuo05} under the terminology of expectation operators (see~\cite[Definition~6.2 and Theorem~6.4]{kuo05}).

\begin{remark}\label{martin99}Note that the classical filtrations on Banach lattice valued $L_1(\Omega;F)$-spaces are abstract bistochastic. Details are as follows. We refer the reader to \cite{DU77,Egg84} for unexplained terminology.\par

Let $(\Omega,\mathscr{F},P)$ be a probability space and $F$ a Banach lattice. Then $X:=L_1(\Omega,F)$ is a Banach lattice. Moreover, $X$ is an order continuous Banach lattice (respectively, a KB-space) if so is $F$.
For our purpose, \textbf{we always assume $F$ is order continuous and has a weak unit $x_0$}. Then $F$ has a strictly positive functional $x^*_0>0$. Define the constant function $f_0(\omega)=x_0$ and the constant functional $g_0(w)=x_0^*$ for each $\omega \in \Omega$. It is straightforward to verify that $f_0$ is a weak unit in $X$ and $g_0$ is a strictly positive functional on $X$.\par

Let $\mathscr{G}$ be a sub-$\sigma$-field of $\mathscr{F}$ and $E:=E(\cdot|\mathscr{G})$ the classical conditional expectation defined on $X$ relative to $\mathscr{G}$. Then $E$ is a positive projection on $X$. Since $E$ preserves the constant functions(cf.~\cite[Proposition I.2.2.3]{Egg84}), we have $Ef_0=f_0$. Moreover, for each $x \in X$, we have
\begin{align*}
(E^*g_0)(x)=&g_0(Ex)=\int x_0^*\Big((Ex)(\omega)\Big)\mathrm{d}\omega\\
=&x_0^*\left(\int (Ex)(\omega)\mathrm{d}\omega\right)=x_0^*\left(\int x(\omega)\mathrm{d}\omega\right)=g_0(x).
\end{align*}
It follows that $E^*g_0=g_0$.
\end{remark}

Let's also observe the following well-known simple facts concerning the condition $(\diamond)$. We provide the proofs for the sake of completeness.

\begin{lemma}\label{double1} Let $X$ be a vector lattice with a strictly positive order continuous functional $x_0^*>0$ and $E$ a positive projection on $X$. The following are equivalent:
\begin{enumerate}
\item\label{double11} $E^*x^*=x^*$ for some strictly positive order continuous functional $x^*>0$;
\item\label{double12} $E$ is strictly positive and order continuous.
\end{enumerate}
\end{lemma}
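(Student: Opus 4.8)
The plan is to prove the equivalence by establishing each implication directly, exploiting the defining properties of positive projections together with strict positivity of $x_0^*$.

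First I would prove \eqref{double12}$\Rightarrow$\eqref{double11}, which is the easy direction: simply take $x^* = x_0^*$. We are given that $x_0^*$ is strictly positive and order continuous, so there is nothing to prove about that functional; the content is entirely in the hypothesis that $E$ is strictly positive and order continuous, but wait — that does not by itself give $E^* x_0^* = x_0^*$. So in fact this direction is \emph{not} trivial and I need to reconsider. Let me instead anticipate that the actual statement relies on the standing hypothesis of the subsection or uses $x_0^*$ more cleverly; a safer reading is that \eqref{double12} should produce \emph{some} fixed strictly positive order continuous functional, and the natural candidate built from a positive projection is a functional of the form $x^* \mapsto$ something $E$-invariant. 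Concretely, if $E$ is order continuous then $x_0^* \circ E$ is order continuous (composition of an order continuous functional with an order continuous operator), it is positive, and it is strictly positive precisely because $E$ is strictly positive (if $x > 0$ then $Ex > 0$ since $E$ is strictly positive, hence $x_0^*(Ex) > 0$). Moreover $E^*(x_0^* \circ E) = x_0^* \circ E \circ E = x_0^* \circ E$ since $E$ is a projection. So $x^* := x_0^* \circ E$ works. That is the key trick for this direction.

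Next, for \eqref{double11}$\Rightarrow$\eqref{double12}, suppose $E^* x^* = x^*$ with $x^*$ strictly positive and order continuous. To see $E$ is strictly positive: if $x > 0$ but $Ex = 0$ (note $Ex \geq 0$ since $E$ is positive), wait — we actually want to rule out $Ex$ being zero, but a positive projection can kill positive elements in general. The invariance $x^*(Ex) = (E^*x^*)(x) = x^*(x) > 0$ forces $Ex \neq 0$, and since $Ex \geq 0$ we get $Ex > 0$; hence $E$ is strictly positive. To see $E$ is order continuous: let $x_\alpha \downarrow 0$ in $X$. Then $Ex_\alpha \geq 0$ is a net, and I want to show $Ex_\alpha \xrightarrow{o} 0$. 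Since $0 \leq E x_\alpha$ and monotonicity of $E$ gives $Ex_\alpha$ is decreasing, it suffices to show its infimum is $0$; if $0 \leq u \leq Ex_\alpha$ for all $\alpha$, then $x^*(u) \leq x^*(Ex_\alpha) = x^*(x_\alpha) \to 0$ by order continuity of $x^*$, so $x^*(u) = 0$, and strict positivity of $x^*$ forces $u = 0$. Hence $Ex_\alpha \downarrow 0$, so $E$ is order continuous. The same argument handles general order null nets by the lattice-operation estimate $|Ex_\alpha| \leq E|x_\alpha|$.

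The main obstacle I anticipate is the direction \eqref{double12}$\Rightarrow$\eqref{double11}: one must resist the temptation to use $x_0^*$ itself and instead observe that $x_0^* \circ E$ is the correct invariant functional, checking carefully that strict positivity transfers (requiring strict positivity of $E$, which is exactly what is hypothesized) and that order continuity of the composition holds (requiring order continuity of $E$). Everything else is a short chase through the definitions of positive projection, strict positivity, and order continuity of functionals.
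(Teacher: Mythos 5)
Your proposal is correct and follows essentially the same route as the paper: for \eqref{double12}$\Rightarrow$\eqref{double11} the paper also takes $x^*=E^*x_0^*=x_0^*\circ E$ (after the same realization that $x_0^*$ itself need not be $E^*$-invariant), and for \eqref{double11}$\Rightarrow$\eqref{double12} it runs the identical computations $x^*(x)=x^*(Ex)$ for strict positivity and $x^*(u)\leq x^*(Ex_\alpha)=x^*(x_\alpha)\downarrow 0$ for order continuity. No gaps.
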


\begin{proof}\eqref{double11}$\Rightarrow$\eqref{double12} Suppose $Ex=0$ for $x\geq 0$. Then $x^*(x)=(E^*x^*)(x)=x^*(Ex)=0$. Thus, $x=0$ due to strict positivity of $x^*$. This proves $E$ is strictly positive. Now suppose $x_\alpha\downarrow 0$ and $Ex_\alpha\geq z\geq 0$ for all $\alpha$. Then $x^*(z)\leq x^*(Ex_\alpha)=x^*(x_\alpha)\downarrow0$ by order continuity of $x^*$. It follows that $x^*(z)=0$, and thus $z=0$. Hence, $Ex_\alpha\downarrow 0$, and $E$ is order continuous.\par

\eqref{double12}$\Rightarrow$\eqref{double11} Simply put $x^*=E^*x^*_0$. It is straightforward to verify that $x^*$ is strictly positive and order continuous and $E^*x^*=x^*$.
\end{proof}

\begin{lemma}\label{double2}Let $X$ be a normed lattice with a quasi-interior point $x_0>0$ and $E$ a positive projection on $X$. The following are equivalent:
\begin{enumerate}
\item\label{double21} $Ex=x$ for some quasi-interior point $x>0$;
\item\label{double22} $E^*$ is strictly positive.
\end{enumerate}
\end{lemma}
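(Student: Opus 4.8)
The plan is to establish both implications directly, taking $Ex_0$ itself as the fixed quasi-interior point in \eqref{double22}$\Rightarrow$\eqref{double21}. The one elementary fact I will use repeatedly is: \emph{if $z>0$ is a quasi-interior point of a normed lattice $X$ and $\psi\in X^*_+$ satisfies $\psi(z)=0$, then $\psi=0$.} This is immediate from the definition, since for each $y\in X_+$ we have $0\leq\psi(y\wedge nz)\leq n\psi(z)=0$, and as $y\wedge nz\to y$ in norm while $\psi$ is norm continuous, $\psi(y)=\lim_n\psi(y\wedge nz)=0$; thus $\psi$ vanishes on $X_+$ and hence on $X$. Dually, a positive vector $z>0$ at which no nonzero member of $X^*_+$ vanishes is automatically a quasi-interior point (equivalently, the closed ideal it generates is all of $X$); this is standard, cf.~\cite{mn1991}.

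For \eqref{double21}$\Rightarrow$\eqref{double22}: let $Ex=x$ with $x>0$ a quasi-interior point, and suppose $x^*\in X^*_+$ with $E^*x^*=0$. Evaluating at $x$ gives $x^*(x)=x^*(Ex)=(E^*x^*)(x)=0$, so $x^*=0$ by the fact above. Hence $E^*$ is strictly positive. For \eqref{double22}$\Rightarrow$\eqref{double21}: since $E(Ex_0)=Ex_0$, it suffices to show that $Ex_0$ is a quasi-interior point, and by the criterion recalled above it is enough to verify that no nonzero $\phi\in X^*_+$ vanishes at $Ex_0$. So let $\phi\in X^*_+$ with $\phi(Ex_0)=0$. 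Then $E^*\phi\in X^*_+$ and $(E^*\phi)(x_0)=\phi(Ex_0)=0$; since $x_0$ is a quasi-interior point, the fact above forces $E^*\phi=0$, and strict positivity of $E^*$ then forces $\phi=0$. (In particular $Ex_0\neq0$: otherwise the implication just proved would read $X^*_+=\{0\}$, contradicting that the nonzero normed lattice $X$ carries a nonzero positive continuous functional.) Thus $Ex_0$ is a quasi-interior point fixed by $E$.

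The only step requiring care is the passage, in the last implication, from ``$Ex_0$ is annihilated by no nonzero positive functional'' to ``$Ex_0$ is quasi-interior''. This rests on two routine facts: that a positive vector is quasi-interior precisely when it generates a norm-dense ideal, and that a proper closed ideal of a normed lattice is annihilated by some nonzero positive functional. The latter follows either by passing to the nonzero quotient normed lattice and using that every nonzero normed lattice admits a nonzero positive continuous functional, or by separating a positive vector outside the ideal by a continuous functional and replacing it with its modulus in the Banach lattice $X^*$ (which still annihilates the ideal, since ideals are solid). Granting these standard facts, the argument above is complete; no deeper machinery is needed.
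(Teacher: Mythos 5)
Your proof is correct and follows essentially the same route as the paper: the same one-line computation $x^*(x)=(E^*x^*)(x)$ for \eqref{double21}$\Rightarrow$\eqref{double22}, and for the converse the same choice $x=Ex_0$ together with the standard criterion (cf.\ \cite[Theorem~4.85]{ali}) that a positive vector not annihilated by any nonzero positive functional is a quasi-interior point. The extra justifications you supply (the proof of that criterion and the remark that $Ex_0\neq 0$) are fine but not needed beyond what the paper cites.
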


\begin{proof}\eqref{double21}$\Rightarrow$\eqref{double22} Suppose $E^*x^*=0$ for some $x^*\geq 0$. Then $x^*(x)=x^*(Ex)=(E^*x^*)(x)=0$. Since $x$ is a quasi-interior point, $x^*=0$. Thus, $E^*$ is strictly positive.
\eqref{double22}$\Rightarrow$\eqref{double21} Put $x=Ex_0$. Clearly, $Ex=x$. It remains to prove $x$ is a quasi-interior point. Indeed, take $x^*>0$, then $E^*x^*>0$. Thus $x^*(x)=x^*(Ex_0)=E^*x^*(x_0)>0$. It follows that $x$ is a quasi-interior point by \cite[Theorem~4.85]{ali}.
\end{proof}

\begin{corollary}\label{ocdouble}Let $X$ be an order continuous Banach lattice with a weak unit and $(E_n)$ a filtration on $X$. Then $(E_n)$ is abstract bistochastic iff $E_1$ and $E_1^*$ are both strictly positive iff $E_n$ and $E_n^*$ are both strictly positive for every $n$.\end{corollary}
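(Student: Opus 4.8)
The plan is to reduce everything to Lemmas~\ref{double1} and~\ref{double2} by means of two standing observations about our ambient space. First, since $X$ is an order continuous Banach lattice with a weak unit, $X$ is order complete, its weak unit is in fact a quasi-interior point, and $X$ carries a strictly positive order continuous functional (all recorded in Subsection~2.1); recall moreover that every quasi-interior point is a weak unit, so in $X$ the two notions coincide. Second, every positive projection $E$ on $X$ is automatically order continuous: $E$ is norm bounded, so if $x_\alpha\downarrow 0$ then $\norm{Ex_\alpha}\to 0$ by order continuity of the norm, and any $z$ with $0\le z\le Ex_\alpha$ for all $\alpha$ satisfies $\norm{z}\le\norm{Ex_\alpha}\to 0$, whence $z=0$ and $Ex_\alpha\downarrow 0$.

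With these in hand I would first establish a single clean statement: \emph{for any positive projection $E$ on $X$, the double condition $(\diamond)$ holds for $E$ (with some weak unit and some strictly positive order continuous functional) if and only if both $E$ and $E^*$ are strictly positive.} Indeed, $E$ is order continuous by the second observation, so Lemma~\ref{double1} gives that $E^*x^*=x^*$ for some strictly positive order continuous $x^*$ iff $E$ is strictly positive; and since weak units coincide with quasi-interior points in $X$, Lemma~\ref{double2} gives that $Ex=x$ for some weak unit $x$ iff $E^*$ is strictly positive. Conjoining these two equivalences yields the claim. Applying it to $E=E_1$ immediately gives the first asserted equivalence, that $(E_n)$ is abstract bistochastic iff $E_1$ and $E_1^*$ are both strictly positive.

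For the equivalence with the ``for every $n$'' statement, only the forward direction requires anything. If $(E_n)$ is abstract bistochastic, fix the witnessing weak unit $x_0$ and strictly positive order continuous functional $x_0^*$ with $E_1x_0=x_0$ and $E_1^*x_0^*=x_0^*$. The filtration identities give $E_nE_1=E_1E_n=E_{1\wedge n}=E_1$, hence $E_nx_0=E_nE_1x_0=E_1x_0=x_0$ and $E_n^*x_0^*=(E_1E_n)^*x_0^*=E_1^*x_0^*=x_0^*$ for every $n$; that is, each $E_n$ satisfies $(\diamond)$ with the \emph{same} pair $(x_0,x_0^*)$. Applying the displayed single statement to $E=E_n$ shows that $E_n$ and $E_n^*$ are both strictly positive for every $n$. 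The converse is trivial (take $n=1$), completing the proof.

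I do not anticipate a genuine obstacle here; the proof is essentially bookkeeping on top of Lemmas~\ref{double1} and~\ref{double2}. The only points needing a little care are the two identifications in an order continuous Banach lattice with a weak unit — ``weak unit $=$ quasi-interior point'' and ``positive projection $=$ order continuous operator'' — and the verification that both composites $E_nE_1$ and $E_1E_n$ collapse to $E_1$, so that one fixed pair $(x_0,x_0^*)$ witnesses $(\diamond)$ for all $n$ at once.
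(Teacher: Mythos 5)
Your proof is correct and is essentially the paper's own argument: the paper likewise observes that weak units in $X$ are quasi-interior points, that $X$ admits a strictly positive (order continuous) functional, and that positive operators on an order continuous Banach lattice are order continuous, and then invokes Lemmas~\ref{double1} and~\ref{double2}. Your extra bookkeeping (the identity $E_nE_1=E_1E_n=E_1$ showing one pair $(x_0,x_0^*)$ witnesses $(\diamond)$ for all $n$) is exactly the remark the paper builds into its definition of abstract bistochastic.
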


\begin{proof}
Observe that weak units in $X$ are quasi-interior points, that $X$ has a strictly positive functional (cf.~\cite[Theorem~4.15]{ali}), and that all positive operators on $X$ are order continuous. Apply Lemmas~\ref{double1} and \ref{double2}.
\end{proof}

\subsection{Uo-convergence of submartingales}
The following is a version of Doob's almost sure convergence theorem in vector lattices.

\begin{theorem}\label{pcon1} Let $X$ be an order complete vector lattice and $(E_n) $ an abstract bistochastic filtration on $X$. Let $(z_n)$ be a submartingale relative to $(E_n)$. If $\sup_nx_0^*(z_n^+)<\infty$ where $x_0^*$ is as in the double condition $(\diamond)$, then $(z_n)$ is uo-Cauchy. In particular, if $(z_n)$ is order bounded, then $(z_n)$ is order convergent.\end{theorem}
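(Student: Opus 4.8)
The plan is to transfer the problem into the AL-space representation and then invoke the classical Doob theorem. First I would form the AL-space $\widetilde X$ attached to the pair $(X, x_0^*)$ as in Subsection~\ref{AL-rep}; since $x_0$ is a weak unit and $x_0^*$ a strictly positive order continuous functional, Lemma~\ref{representation} identifies $\widetilde X$ with $L_1(\mu)$ for a finite measure $\mu$, with $x_0 \leftrightarrow \mathbf 1$. Because $(E_n)$ is abstract bistochastic, each $E_n$ satisfies $E_n^* x_0^* = x_0^*$, so by Lemma~\ref{ope} each $E_n$ extends uniquely to a contractive positive operator $\widetilde E_n$ on $\widetilde X$. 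I would check that $(\widetilde E_n)$ remains a filtration (the identities $E_nE_m = E_{m\wedge n}$ pass to the completion by density and continuity) and that $(z_n)$ remains a submartingale relative to $(\widetilde E_n)$ in $\widetilde X$: here I need $z_n \in \mathrm{Range}(E_n) \subseteq \mathrm{Range}(\widetilde E_n)$, which holds since $\widetilde E_n z_n = E_n z_n = z_n$, and $\widetilde E_n z_m = E_n z_m \ge z_n$ for $m \ge n$ from the original submartingale inequality (the order in $X$ agrees with that of $\widetilde X$ since $X$ is an ideal of $\widetilde X$).

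Next I would check the norm-boundedness hypothesis needed for Doob's theorem: $\norm{z_n}_L = x_0^*(|z_n|) = x_0^*(z_n^+) + x_0^*(z_n^-)$. For a submartingale, $x_0^*(z_n) = x_0^*(E_n z_n) \le x_0^*(E_n z_{n+1}) \le \cdots$ is nondecreasing (using $x_0^* = E_n^* x_0^*$ and $E_n z_m \ge z_n$), hence bounded below by $x_0^*(z_1)$; combined with $\sup_n x_0^*(z_n^+) < \infty$ and $x_0^*(z_n^-) = x_0^*(z_n^+) - x_0^*(z_n)$, this gives $\sup_n x_0^*(z_n^-) < \infty$, so $\sup_n \norm{z_n}_L < \infty$. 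Now Theorem~\ref{doob} applies in $L_1(\mu)$: $(z_n)$ converges almost everywhere, i.e.\ it is almost everywhere Cauchy, which is exactly the statement that $(z_n)$ is uo-Cauchy in $\widetilde X$ (recall uo-Cauchy of sequences in $L_1(\mu)$ is a.e.\ Cauchy). Finally, by Lemma~\ref{uo-equiv} applied to the double-indexed net $(z_n - z_{n'})$ — or more directly by the remark after it — uo-Cauchy in $\widetilde X$ is equivalent to uo-Cauchy in $X$, since $X$ is an ideal of $\widetilde X$; this yields the conclusion.

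For the last sentence: if $(z_n)$ is order bounded in $X$, then it is an order bounded uo-Cauchy net, hence o-Cauchy (for order bounded nets uo-Cauchy equals o-Cauchy), and in the order complete vector lattice $X$ an order bounded o-Cauchy net is order convergent. This is immediate from the remarks opening Section~4.

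The main obstacle I expect is the bookkeeping around the representation: verifying that the submartingale structure genuinely survives the passage from $X$ to $\widetilde X$ — in particular that $\widetilde E_n$ are honest projections (idempotency extends by continuity since $\widetilde E_n^2$ and $\widetilde E_n$ agree on the dense set $X$) with the right range, and that the functional-boundedness hypothesis $\sup_n x_0^*(z_n^+) < \infty$ is precisely what converts to $L_1$-norm boundedness. None of this is deep, but it is where a careless argument could slip; the genuinely hard analytic content is entirely outsourced to the classical Doob theorem.
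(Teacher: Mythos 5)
Your overall route is exactly the paper's: pass to the AL-space $\widetilde X\cong L_1(\mu)$ via Subsection~\ref{AL-rep} and Lemma~\ref{representation}, extend the $E_n$ by Lemma~\ref{ope}, verify $\sup_n\norm{z_n}_L<\infty$ from $\sup_n x_0^*(z_n^+)<\infty$ (your computation of the $L_1$-bound is the same as the paper's, just phrased through $x_0^*(z_n^-)$), invoke Doob, and pull uo-Cauchyness back by Lemma~\ref{uo-equiv}. The boundedness argument and the final reduction are fine.

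There is, however, one genuine gap. You carefully check that the $\widetilde E_n$ are contractive positive projections and that $(z_n)$ is a submartingale relative to $(\widetilde E_n)$ in the \emph{abstract} sense, and then you cite Theorem~\ref{doob}. But Theorem~\ref{doob} is the classical Doob theorem: it concerns submartingales with respect to a classical filtration of conditional expectations $E(\cdot\mid\mathscr F_n)$, not with respect to an arbitrary sequence of commuting contractive positive projections on $L_1(\mu)$. Your argument never bridges this. The paper's bridge is Douglas' representation theorem (\cite[Corollary~5.52]{abr}): since $\widetilde E_n\mathbf 1=\mathbf 1$ (which is where the \emph{other} half of the double condition, $E_nx_0=x_0$, enters — a hypothesis you use only implicitly, to get a weak unit for Lemma~\ref{representation}, and never for the dynamics), each contractive positive projection $\widetilde E_n$ fixing $\mathbf 1$ is an honest conditional expectation, so $(\widetilde E_n)$ is a classical filtration and Doob applies. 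Without this identification the appeal to Theorem~\ref{doob} is unjustified; indeed the whole point of the abstract-bistochastic hypothesis is that \emph{both} conditions of $(\diamond)$ are needed, one to extend the operators contractively and one to recognize the extensions as conditional expectations. Adding this one step (a single citation) closes the gap and makes your proof coincide with the paper's.
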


\begin{proof}Let $x_0$ be also as in the double condition $(\diamond)$. Without loss of generality, assume $x_0^*(x_0)=1$.
Let $\widetilde{X}$ be the AL-space constructed for the pair $(X,x_0^*)$ as in Subsection~\ref{AL-rep}. Then by Lemma~\ref{representation}, $\widetilde{X}=L_1(P)$ for some probability $P$ with $x_0$ corresponding to the constant $1$ function.\par

By Lemma~\ref{ope}, each $E_n$ extends to a contractive positive operator $\widetilde{E_n}$ on $\widetilde{X}$. Clearly, each $\widetilde{E_n}$ is still a projection. Since $\widetilde{E_n}1=1$, we know, by Douglas' representation theorem(\cite[Corollary~5.52]{abr}), that each $\widetilde{E_n}$ is a conditional expectation on $L_1(P)$. It follows that $\{\widetilde{E_n}\}$ is a classical filtration on $L_1(P)$.\par

For any $n\leq m$, we have $x_0^*(z_n)\leq x_0^*(E_nz_m)=x_0^*(z_m)\leq x_0^*(z_m^+)$. Hence, for all $k$, we have $x_0^*(z_1)\leq x_0^*(z_k)\leq \sup_nx_0^*(z_n^+)$. It follows that $\sup_n\abs{x_0^*(z_n)}<\infty$, and therefore, $\sup_n x_0^*(\abs{z_n})<\infty$, i.e.~$\sup_n\norm{z_n}_L<\infty$. We know, by Doob's Theorem~\ref{doob}, that $(z_n)$ converges almost everywhere (to a function in $\widetilde{X}=L_1(P)$). Therefore, $(z_n) $ is uo-Cauchy in $\widetilde{X}$, and hence in $X$ by Lemma~\ref{uo-equiv}.
\end{proof}

According to \cite[Theorem 6]{cha64}, every norm bounded martingale in $L_1(\Omega;F)$ with respect to classical filtrations converges strongly almost surely for every probability space $\Omega$ iff $F$ has the Radon-Nikodym property. Below we prove that even when $F$ lacks the Radon-Nikodym property, norm bounded submartingales in $L_1(\Omega;F)$ still have certain convergence property.

\begin{corollary}\label{vmart}
Let $F$ be an order continuous Banach lattice with a weak unit. Then every norm bounded submartingale $(z_n)$ in $L_1(\Omega;F)$ with respect to a classical filtration is almost surely uo-Cauchy in $F$ (i.e.~outside a subset of measure $0$, $z_n(\omega)$ is uo-Cauchy in $F$).
\end{corollary}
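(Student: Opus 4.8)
The plan is to apply Theorem~\ref{pcon1} to $X=L_1(\Omega;F)$ and then to push the resulting uo-Cauchy property down from $L_1(\Omega;F)$ to $F$ fibrewise. First I would set up the ambient structure: since $F$ is order continuous, both $F$ and $X:=L_1(\Omega;F)$ are order complete (the latter by Remark~\ref{martin99}), and by Remark~\ref{martin99} the classical filtration $(E_n)$ on $X$ is abstract bistochastic, with weak unit the constant function $f_0\equiv x_0$ and strictly positive order continuous functional $g_0(w)=\int_\Omega x_0^*\bigl(w(\omega)\bigr)\,d\omega$, where $x_0$ is a weak unit of $F$ and $x_0^*>0$ a strictly positive functional on $F$. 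The functional playing the role of ``$x_0^*$'' in the double condition $(\diamond)$ for $X$ is $g_0$. Since $\bigl\|z_n^+(\omega)\bigr\|_F\le\bigl\|z_n(\omega)\bigr\|_F$ and $x_0^*\in F^*_+$, we get $g_0(z_n^+)=\int_\Omega x_0^*\bigl(z_n^+(\omega)\bigr)\,d\omega\le\|x_0^*\|\,\|z_n\|_{L_1(\Omega;F)}$, so norm boundedness of the submartingale gives $\sup_n g_0(z_n^+)<\infty$. Theorem~\ref{pcon1} then yields that $(z_n)$ is uo-Cauchy in $L_1(\Omega;F)$.

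The main step is the descent to $F$. By definition the net $(z_n-z_m)_{(n,m)\in\mathbb N^2}$ uo-converges to $0$ in $L_1(\Omega;F)$; testing against $y=f_0$ gives $|z_n-z_m|\wedge f_0\xrightarrow{o}0$ in $L_1(\Omega;F)$. This net is order bounded by $f_0$, so, using order completeness of $L_1(\Omega;F)$ and cofinality of the diagonal in $\mathbb N^2$, the decreasing sequence $G_k:=\sup_{n,m\ge k}\bigl(|z_n-z_m|\wedge f_0\bigr)$ is well defined and $G_k\downarrow 0$ in $L_1(\Omega;F)$. Here I would invoke the standard fact that, $F$ being order complete, the supremum (resp.\ infimum) of a countable order bounded family in $L_1(\Omega;F)$ is computed $\omega$-wise in $F$ almost everywhere (lattice operations in $L_1(\Omega;F)$ being pointwise): the pointwise suprema exist in $F$ a.e.\ by order completeness, are measurable as a.e.\ norm limits of the finite lattice sups (an order bounded increasing sequence in the order continuous lattice $F$ is norm convergent), and are norm dominated by the dominating function, hence integrable; a routine check identifies them with the lattice suprema. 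Consequently, for a.e.\ $\omega$,
\[
\inf_k\ \sup_{n,m\ge k}\Bigl(\bigl|z_n(\omega)-z_m(\omega)\bigr|\wedge x_0\Bigr)=0\qquad\text{in }F,
\]
that is, $\bigl|z_n(\omega)-z_m(\omega)\bigr|\wedge x_0\xrightarrow{o}0$ in $F$ over $\mathbb N^2$. Since $x_0$ is a weak unit of the order complete lattice $F$, Lemma~\ref{uoc} upgrades this to $z_n(\omega)-z_m(\omega)\xrightarrow{uo}0$ in $F$; i.e.\ $(z_n(\omega))$ is uo-Cauchy in $F$ for a.e.\ $\omega$, which is the assertion.

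The only genuine obstacle is this fibrewise realization of order limits in the Bochner space $L_1(\Omega;F)$: one must check that the order supremum taken in the vector lattice $L_1(\Omega;F)$ really coincides, a.e., with the $F$-valued pointwise supremum, and that the latter is a bona fide (measurable, integrable) element of $L_1(\Omega;F)$. Restricting to countable index sets, which suffices since we index by pairs of positive integers, keeps this elementary, as sketched above; everything else is a direct application of Theorem~\ref{pcon1} and Lemma~\ref{uoc}.
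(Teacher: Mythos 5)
Your proposal is correct and follows essentially the same route as the paper: apply Theorem~\ref{pcon1} via Remark~\ref{martin99} to get that $(z_n)$ is uo-Cauchy in $L_1(\Omega;F)$, reduce via Lemma~\ref{uoc} to the decreasing sequence $\sup_{n,m\ge k}\bigl(|z_n-z_m|\wedge f_0\bigr)\downarrow 0$, and then pass to the $\omega$-wise statement in $F$. You merely make explicit two points the paper leaves implicit, namely the verification that $\sup_n g_0(z_n^+)<\infty$ and the identification of countable order suprema in $L_1(\Omega;F)$ with a.e.\ pointwise suprema in $F$.
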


\begin{proof}Let $x_0$ and $f_0$ be as in Remark~\ref{martin99}.  By Remark~\ref{martin99} and Theorem~\ref{pcon1}, we know that $(z_n)$ is uo-Cauchy in $L_1(\Omega;F)$. Now observe that $(z_n)$ is uo-Cauchy in $L_1(\Omega;F)$ iff (by Lemma~\ref{uoc}) $\abs{z_n-z_m}\wedge f_0\xrightarrow{o}0$ in $L_1(\Omega;F)$, iff $\sup_{n'\geq n,m'\geq m}\abs{z_{n'}-z_{m'}}\wedge f_0\downarrow 0$ in $L_1(\Omega;F)$, iff outside a subset of measure $0$, $\sup_{n'\geq n,m'\geq m}\abs{z_{n'}(\omega)-z_{m'}(\omega)}\wedge x_0\downarrow 0$ in $F$, iff outside a subset of measure $0$, $(z_n(\omega))$ is uo-Cauchy in $F$.
\end{proof}

Note that, in general, we can not expect uo-convergence in Theorem~\ref{pcon1}.

\begin{example}[{\cite[Example~6]{tro11}}]\label{vlad_ex}Let $X=c_0$ and put $E_1$ to be the projection on $c_0$ consisting of $2\times 2$ diagonal blocks \begin{math}
  \Bigl[
  \begin{smallmatrix}
    \frac{1}{2} &\frac{1}{2}\\
 \frac{1}{2}     & \frac{1}{2}
  \end{smallmatrix}
  \Bigr]
\end{math}. Define $E_n$ by replacing the first $(n-1)$ diagonal blocks in $E_1$ with the identity matrix. Then $(E_n)$ is a (bounded) filtration on $c_0$. Since both $E_1$ and $E_1^*$ are strictly positive, $(E_n)$ is abstract bistochastic, by Corollary~\ref{ocdouble}. For each $n\geq 1$, put $$x_n=\big(\overbrace{1,-1,1,-1,\dots,1,-1}^{2n-2\mbox{ coordinates}},0,0,\dots\big).$$
Then $(x_n)$ is a norm bounded martingale relative to $(E_n)$. It is clear that $(x_n)$ is uo-Cauchy but not uo-convergent in $c_0$.
\end{example}

The following Proposition~\ref{nconf} establishes uo-convergence of submartingales. For the proof of the proposition, we need the following critical observation which has appeared in \cite{uhl71} and \cite{tro05}.

\begin{lemma}\label{weaksub}Let $(E_n)$ be a filtration on a Banach lattice.
\begin{enumerate}
\item\label{weaksub1} If $(x_n)$ is a martingale such that a subsequence $(x_{n_k})$ weakly converges to $x$, then $x_n=E_nx$ for all $n$.
\item\label{weaksub2} If $(z_n)$ is a submartingale such that a subsequence $(z_{n_k})$ weakly converges to $x$, then $z_n\leq E_nx$ for all $n$.
\end{enumerate}
\end{lemma}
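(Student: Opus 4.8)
The plan is to prove both parts of Lemma~\ref{weaksub} by exploiting the fact that each positive projection $E_n$ is weak-to-weak continuous, together with the order-closedness (hence weak-closedness) of the positive cone. For part~\eqref{weaksub1}, fix $n$ and consider the tail of the subsequence: for $k$ large enough that $n_k\geq n$, the martingale property gives $E_n x_{n_k}=x_n$. Since $x_{n_k}\xrightarrow{w}x$ and $E_n$ is a bounded linear operator, it is weak-to-weak continuous, so $E_n x_{n_k}\xrightarrow{w}E_n x$. But $E_n x_{n_k}=x_n$ is a constant sequence, so its weak limit is $x_n$; by uniqueness of weak limits, $x_n=E_n x$.

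For part~\eqref{weaksub2}, the same idea applies but with inequalities in place of equalities. Fix $n$; for $k$ large enough that $n_k\geq n$, the submartingale property gives $E_n z_{n_k}\geq z_n$, i.e.\ $E_n z_{n_k}-z_n\in X_+$. As before, $E_n z_{n_k}\xrightarrow{w}E_n x$, hence $E_n z_{n_k}-z_n\xrightarrow{w}E_n x-z_n$. Since the positive cone $X_+$ of a Banach lattice is norm-closed and convex, it is weakly closed, so the weak limit $E_n x-z_n$ lies in $X_+$, that is, $z_n\leq E_n x$ for all $n$.

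The only subtlety worth flagging is that the submartingale indices $z_n$ are required to lie in $\mathrm{Range}(E_n)$, but this is not needed for the conclusion $z_n\leq E_n x$; the argument uses only the defining inequality $E_n z_m\geq z_n$ for $m\geq n$. I do not expect a genuine obstacle here: the whole content is the weak-to-weak continuity of the projections (automatic for bounded operators, and each $E_n$ is bounded since it is a positive projection on a Banach lattice) and the weak-closedness of $X_+$. Both are standard, so the proof is essentially the two short paragraphs above.
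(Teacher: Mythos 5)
Your proof is correct and follows essentially the same route as the paper's: apply $E_n$ to the tail of the weakly convergent subsequence, use weak-to-weak continuity of the (automatically bounded) positive projection, and pass the order inequality to the weak limit via weak-closedness of the positive cone. The paper only writes out part~(2) and leaves these continuity/closedness facts implicit, so your version is just a more detailed account of the same argument.
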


\begin{proof}We prove \eqref{weaksub2} only. Fix $n$. Then for sufficiently large $k$, we have $z_n\leq E_nz_{n_{k}}$. Since $E_nz_{n_k}\xrightarrow{w}{E_nx}$, the desired result follows.
\end{proof}

\begin{proposition}\label{nconf}Let $X$ be an order continuous Banach lattice and $(E_n) $ an abstract bistochastic filtration on $X$. Let $(z_n)$ be a submartigale relative to $(E_n)$.
\begin{enumerate}
\item\label{nconf1} If $X$ is a KB-space and $\sup_n\norm{z_n}<\infty$, then $(z_n)$ is uo-convergent.
\item\label{nconf2} If a subsequence of $(z_n)$ converges weakly to $x$, then $z_n \xrightarrow{{uo}} x$.
\end{enumerate}
\end{proposition}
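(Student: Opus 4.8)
The plan is to extract uo-Cauchyness from Theorem~\ref{pcon1} in both parts and then, separately, to upgrade it to uo-convergence with an identified limit. For part~\eqref{nconf1}: since $X$ is a KB-space it is order continuous, hence order complete, so Theorem~\ref{pcon1} is applicable. Its hypothesis holds because $x_0^*$, being a positive functional on the Banach lattice $X$, is norm bounded, so $x_0^*(z_n^+)\le\norm{x_0^*}\norm{z_n^+}\le\norm{x_0^*}\norm{z_n}$ is bounded in $n$. Thus $(z_n)$ is uo-Cauchy; being moreover norm bounded in the KB-space $X$, it is uo-convergent by the implication \eqref{cha1}$\Rightarrow$\eqref{cha2} of Theorem~\ref{cha}.

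For part~\eqref{nconf2}: let $(z_{n_k})$ be a subsequence with $z_{n_k}\xrightarrow{w}x$. First I would apply Lemma~\ref{weaksub}\eqref{weaksub2} to get $z_n\le E_nx$ for every $n$; taking positive parts and using positivity of $E_n$ gives $z_n^+\le(E_nx)^+\le E_n(x^+)$, whence $x_0^*(z_n^+)\le x_0^*(E_nx^+)=(E_n^*x_0^*)(x^+)=x_0^*(x^+)$ for all $n$. So Theorem~\ref{pcon1} applies and $(z_n)$ is uo-Cauchy in $X$, hence in $\widetilde X$ by Lemma~\ref{uo-equiv}. Writing $\widetilde X=L_1(P)$ for a probability measure $P$ (Lemma~\ref{representation}), uo-Cauchyness of the sequence $(z_n)$ in $L_1(P)$ amounts to a.e.\ Cauchyness, so $z_n\to y$ a.e.\ for some measurable $y$. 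It remains to identify $y$ with $x$: since $x_0^*$ is norm continuous, Lemma~\ref{al-wcon} gives $z_{n_k}\xrightarrow{w}x$ in $L_1(P)$, so $\{z_{n_k}\}$ is relatively weakly compact there and hence uniformly integrable by the Dunford--Pettis theorem; combined with $z_{n_k}\to y$ a.e.\ on the finite measure space, this forces $y\in L_1(P)$ and $z_{n_k}\to y$ in $\norm{\cdot}_L$, so $y=x$ by uniqueness of weak limits. Therefore $x=y\in X$ and $z_n\to x$ a.e.\ in $\widetilde X$, i.e.\ $z_n\xrightarrow{uo}x$ in $\widetilde X$, and Lemma~\ref{uo-equiv} transfers this to $z_n\xrightarrow{uo}x$ in $X$.

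The routine parts --- the norm estimates and the passage back and forth through the representation of Subsection~\ref{AL-rep} --- are harmless; the one genuinely delicate point is the identification of the uo-limit in part~\eqref{nconf2}. The a.e.\ limit $y$ furnished by the submartingale lives a priori only in $L_1(P)\supsetneq X$, and locating it as the weak limit $x\in X$ is precisely where relative weak compactness of $(z_{n_k})$ and the Dunford--Pettis / uniform-integrability argument are indispensable.
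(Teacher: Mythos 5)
Your proof is correct, and both parts follow the paper's strategy: the norm estimate $x_0^*(z_n^+)\le\norm{x_0^*}\,\norm{z_n}$ plus Theorems~\ref{pcon1} and \ref{cha} for part~\eqref{nconf1}, and Lemma~\ref{weaksub} plus the bound $x_0^*(z_n^+)\le x_0^*(x^+)$ plus Theorem~\ref{pcon1} for the uo-Cauchyness in part~\eqref{nconf2}. The only place you diverge is the identification of the uo-limit in \eqref{nconf2}: the paper simply observes that the subsequence $(z_{n_k})$ is uo-Cauchy and relatively weakly compact, invokes Theorem~\ref{wcuc} to conclude $z_{n_k}\xrightarrow{uo}x$, and then appeals to Remark~\ref{trivial}\eqref{trivial1} to pass from the subnet to the whole sequence. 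You instead re-enter the representation $\widetilde X=L_1(P)$ and run the Dunford--Pettis / uniform integrability / Vitali argument by hand; this is sound (the weak unit needed for Lemma~\ref{representation} is supplied by the condition $(\diamond)$, and $x_0^*$ is norm continuous because positive functionals on Banach lattices are), but it essentially reproduces the proof of the weak-unit case of Theorem~\ref{wcuc}, which is already available as a black box. The paper's route is shorter and more modular; yours has the small advantage of handling the whole sequence's a.e.\ limit directly, so you never need Remark~\ref{trivial}\eqref{trivial1}.
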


\begin{proof}
\eqref{nconf1} follows from Theorem~\ref{pcon1} and Theorem~\ref{cha}. For \eqref{nconf2}, suppose $z_{n_k}\xrightarrow{w} x$. Let $x_0^*$ be as in the double condition $(\diamond)$. Since $z_n\leq E_nx$ by Lemma~\ref{weaksub}, $z_n^+\leq E_nx^+$. Thus $\sup_nx_0^*(z_n^+)\leq \sup_nx_0^*(E_nx^+)=x_0^*(x^+)<\infty$. It follows from Theorem~\ref{pcon1} that $(z_n)$ is uo-Cauchy. In particular, $(z_{n_k})$ is also uo-Cauchy. Since $ (z_{n_k})$ is also relatively weakly compact, it uo-converges to $x$ by Theorem~\ref{wcuc}. Therefore, $(z_n)$ uo-converges to $x$, by Remark~\ref{trivial}\eqref{trivial1}.
\end{proof}

\subsection{Norm convergence of submartingales}
In the classical case, we have the following norm convergence theorems for (sub-)martingales(cf. ~\cite[Chapter 9]{chu74}, \cite[Chapter II]{meyer}).

\begin{theorem}\label{n-con}
\begin{enumerate}
\item\label{n-con1} In $L_1(\mu)$, if $(z_n)$ is a relatively weakly compact submartingale then $(z_n)$ converges in norm. 
\item\label{n-con2} In $L_p(\mu)(1 \leq p < \infty)$, every relatively weakly compact martingale converges in norm. 
\item\label{n-con3} In $L_p(\mu)(1 < p < \infty)$, if $(z_n)$ is a norm bounded submartingale then $(z_n^+)$ converges in norm.
\end{enumerate}
\end{theorem}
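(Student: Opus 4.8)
The plan is to deduce all three parts from two tools already in hand: Proposition~\ref{nconf}\eqref{nconf1}, that a norm bounded submartingale in a KB-space is uo-convergent, and Proposition~\ref{ncon} (the ``Abstract Dominated Theorem''), that in an order continuous Banach lattice an almost order bounded uo-convergent net converges in norm. For each statement I would first extract the uo-limit from the former, then verify almost order boundedness of the relevant sequence by hand, so that the latter yields norm convergence. Recall that $L_p(\mu)$ is a KB-space for every $1\le p<\infty$ and that the classical conditional-expectation filtrations on it are abstract bistochastic, exactly as in Remark~\ref{martin99} (take $F=\mathbb R$); thus Proposition~\ref{nconf}\eqref{nconf1} is available throughout.

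For \eqref{n-con1}: a relatively weakly compact set is norm bounded, so $(z_n)$ is a norm bounded submartingale in the KB-space $L_1(\mu)$, whence $z_n\xrightarrow{uo}z$ for some $z$ by Proposition~\ref{nconf}\eqref{nconf1}. By the Dunford--Pettis theorem, relative weak compactness in $L_1(\mu)$ is the same as almost order boundedness, so Proposition~\ref{ncon} gives $\norm{z_n-z}\to 0$. For \eqref{n-con2}: the case $p=1$ is a special case of \eqref{n-con1}, since a martingale is a submartingale. For $1<p<\infty$, $L_p(\mu)$ is reflexive, so a relatively weakly compact martingale is just a norm bounded one; being a submartingale in a KB-space it uo-converges to some $z$ by Proposition~\ref{nconf}\eqref{nconf1}, while Doob's $L_p$ maximal inequality gives $\sup_n\abs{z_n}\in L_p(\mu)$, so $(z_n)$ lies in an order interval, hence is almost order bounded, and Proposition~\ref{ncon} completes the argument.

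For \eqref{n-con3}: one first checks that $(z_n^+)$ is again a norm bounded submartingale. Indeed, for $m\ge n$ the positivity of $E_n$ gives $z_n\le E_nz_m$ and $E_nz_m\le E_nz_m^+$, so $z_n^+\le(E_nz_m)^+\le E_nz_m^+$; moreover $z_n^+\in\mathrm{Range}(E_n)$ because $z_n$, lying in $\mathrm{Range}(E_n)=L_p(\mathscr F_n)$, is $\mathscr F_n$-measurable and hence so is $z_n^+$; and $\norm{z_n^+}\le\norm{z_n}$. Now the argument of \eqref{n-con2} applies verbatim to $(z_n^+)$: since $L_p(\mu)$ is a KB-space it uo-converges to some $z\ge 0$ by Proposition~\ref{nconf}\eqref{nconf1}, Doob's $L_p$ maximal inequality applied to the nonnegative submartingale $(z_n^+)$ gives $\sup_n z_n^+\in L_p(\mu)$, so $(z_n^+)$ is order bounded, hence almost order bounded, and Proposition~\ref{ncon} yields $\norm{z_n^+-z}\to 0$.

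The step I expect to be the real content --- and the only place where a genuinely classical analytic input is needed beyond the machinery of the preceding sections --- is the domination in parts \eqref{n-con2} and \eqref{n-con3} for $1<p<\infty$, namely Doob's $L_p$ maximal inequality $\norm{\sup_n Y_n}_p\le\frac{p}{p-1}\sup_n\norm{Y_n}_p$ for a nonnegative (sub)martingale $(Y_n)$. This is precisely what upgrades norm boundedness to almost order boundedness in $L_p$, a space in which relative weak compactness alone does \emph{not} imply almost order boundedness (witness the unit vector basis of $\ell_2$, which is weakly compact but not almost order bounded, cf.~Remark~\ref{follow-ncon}). In $L_1$, by contrast, the Dunford--Pettis theorem hands us almost order boundedness directly from relative weak compactness, which is why \eqref{n-con1} requires no maximal inequality.
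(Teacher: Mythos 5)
Your argument is correct, but it is worth pointing out that the paper does not prove Theorem~\ref{n-con} at all: it is stated as classical background with references to Chung and Meyer, and the paper's own contribution is the opposite direction, namely the abstract generalizations (Proposition~\ref{helpful} and Theorem~\ref{partial}) that recover it. What you have done is rederive the classical statements from the paper's machinery, and the derivation holds up: for \eqref{n-con1} the chain ``KB-space $+$ norm bounded submartingale $\Rightarrow$ uo-convergent'' (Proposition~\ref{nconf}\eqref{nconf1}) together with ``relatively weakly compact $=$ almost order bounded in $L_1$'' (Dunford--Pettis) and Proposition~\ref{ncon} is exactly the specialization of Proposition~\ref{helpful}; for \eqref{n-con2} and \eqref{n-con3} you correctly identify Doob's $L_p$ maximal inequality as the one genuinely external input that upgrades norm boundedness to order boundedness when $p>1$, and your verification that $(z_n^+)$ is again a norm bounded submartingale (using $z_n^+\le(E_nz_m)^+\le E_nz_m^+$ and the sublattice property of $\mathrm{Range}(E_n)$) is sound. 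Two caveats. First, your route has a circular flavor as a ``proof'' of the classical theorem: Proposition~\ref{nconf} rests on Theorem~\ref{pcon1}, which rests on Doob's almost sure convergence Theorem~\ref{doob}, so you are really showing that the classical norm convergence theorems follow from Doob's a.s.\ theorem plus Dunford--Pettis, respectively Doob's maximal inequality --- which is precisely the classical proof in lattice-theoretic dress, not an independent one. Second, the identification of the classical filtration as abstract bistochastic via Remark~\ref{martin99} with $F=\mathbb{R}$ tacitly assumes $\mu$ is a probability (or at least finite) measure, which is the intended classical setting but should be said.
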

We now extend these results to abstract (sub-)martingales. The following is immediate by Proposition~\ref{aobuc}, Theorems~\ref{wcuc} and \ref{pcon1}, and extends Theorem~\ref{n-con}\eqref{n-con1}.

\begin{proposition}\label{helpful}Let $X$ be an order continuous Banach lattice and $(E_n) $ an abstract bistochastic filtration on $X$. Let $(z_n)$ be a submartigale relative to $(E_n)$.
\begin{enumerate}
\item If $(z_n)$ is almost order bounded, then $(z_n)$ converges uo- and in norm to the same limit.
\item If $(z_n)$ is relatively weakly compact, then $(z_n)$ converges uo- and $\abs{\sigma}(X,X^*)$ to the same limit.
\end{enumerate}
\end{proposition}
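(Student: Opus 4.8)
The plan is to obtain both statements by feeding the output of Theorem~\ref{pcon1} into the two convergence criteria for uo-Cauchy nets already proved in this section. The common first step is to show that, under either hypothesis, the submartingale $(z_n)$ is uo-Cauchy; the second step then uses almost order boundedness (resp.~relative weak compactness) to upgrade this to norm (resp.~$\abs{\sigma}(X,X^*)$) convergence, together with uo-convergence to the same limit.

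For the first step I would begin by recording that an order continuous Banach lattice is Dedekind complete, so that Theorem~\ref{pcon1} is applicable to the submartingale $(z_n)$ and the abstract bistochastic filtration $(E_n)$. Next, observe that the functional $x_0^*$ appearing in the double condition $(\diamond)$, being a positive linear functional on a Banach lattice, is automatically norm continuous; hence from $\norm{z_n^+}\leq\norm{z_n}$ we obtain $\sup_n x_0^*(z_n^+)\leq\norm{x_0^*}\sup_n\norm{z_n}$. Since any almost order bounded subset and any relatively weakly compact subset of $X$ is norm bounded, in both cases $\sup_n\norm{z_n}<\infty$, so the hypothesis $\sup_n x_0^*(z_n^+)<\infty$ of Theorem~\ref{pcon1} holds, and we conclude that $(z_n)$ is uo-Cauchy.

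Part (1) then follows immediately: $(z_n)$ is now an almost order bounded uo-Cauchy net in an order continuous Banach lattice, so Proposition~\ref{aobuc} gives that it converges uo- and in norm to the same limit. Part (2) follows in the same way: $(z_n)$ is a relatively weakly compact uo-Cauchy net, so Theorem~\ref{wcuc} yields convergence uo- and in $\abs{\sigma}(X,X^*)$ to the same limit.

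I do not anticipate any genuine obstacle here; the statement is essentially a bookkeeping corollary. The only points requiring a moment's care are (i) justifying that $X$ is order complete so that Theorem~\ref{pcon1} may be invoked, and (ii) the passage from norm boundedness of $(z_n)$ to the scalar bound $\sup_n x_0^*(z_n^+)<\infty$, which rests on norm continuity of the positive functional $x_0^*$ and the inequality $\norm{z_n^+}\leq\norm{z_n}$.
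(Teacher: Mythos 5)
Your proposal is correct and follows exactly the route the paper intends: the paper derives this proposition directly from Theorem~\ref{pcon1} (to get that $(z_n)$ is uo-Cauchy, using norm boundedness of almost order bounded or relatively weakly compact sets together with norm continuity of the positive functional $x_0^*$) combined with Proposition~\ref{aobuc} for part (1) and Theorem~\ref{wcuc} for part (2). The two points you flag for care (order completeness of order continuous Banach lattices, and the scalar bound $\sup_n x_0^*(z_n^+)<\infty$) are handled correctly.
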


The next result extends Theorem~\ref{n-con}\eqref{n-con2} and \eqref{n-con3}.

\begin{theorem}\label{partial}
Let $X$ be an order continuous Banach lattice and $(E_n)$ a bounded abstract bistochastic filtration on $X$.
\begin{enumerate}
\item\label{partial1} For any martingale $(x_n)$ relative to $(E_n)$, if a subsequence $(x_{n_k})$ weakly converges to $x$, then $(x_n)$ converges uo- and in norm to $x$;
\item\label{partial2} For any submartingale $(z_n)$ relative to $(E_n)$, if a subsequence $(z_{n_k})$ weakly converges to $x$, then $(z_n^+)$ converges uo- and in norm to $x^+$.
\end{enumerate}
\end{theorem}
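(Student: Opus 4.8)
The plan is to prove both parts in parallel by reducing each to two facts about the relevant sequence --- $(x_n)$ in part~\eqref{partial1}, and $(z_n^+)$ in part~\eqref{partial2}: (i) it is uo-convergent to the claimed limit ($x$, resp.\ $x^+$); and (ii) it is almost order bounded. Proposition~\ref{ncon} then upgrades (i)+(ii) to norm convergence, and (i) gives the uo-part of the conclusion. Throughout I use the data $x_0,x_0^*$ of the abstract bistochastic filtration and write $M:=\sup_n\norm{E_n}<\infty$.

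First I would establish uo-convergence. Since a subsequence converges weakly, Lemma~\ref{weaksub}\eqref{weaksub2} gives $z_n\le E_nx$ (resp.\ Lemma~\ref{weaksub}\eqref{weaksub1} gives $x_n=E_nx$) for all $n$; using that $E_n^*x_0^*=x_0^*$ (which follows from $E_1^*x_0^*=x_0^*$ and $E_1E_n=E_1$) together with $z_n^+\le(E_nx)^+\le E_nx^+$, one gets $\sup_n x_0^*(z_n^+)\le x_0^*(x^+)<\infty$ (and likewise $\sup_n x_0^*(x_n^+)<\infty$), so Theorem~\ref{pcon1} applies and $(z_n)$ (resp.\ $(x_n)$) is uo-Cauchy. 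Now the weakly convergent subsequence is relatively weakly compact and uo-Cauchy, so by Theorem~\ref{wcuc} it converges uo- and in $\abs{\sigma}(X,X^*)$ to a common limit, necessarily its weak limit $x$; hence that subsequence uo-converges to $x$, and Remark~\ref{trivial}\eqref{trivial1} propagates this to $z_n\xrightarrow{uo}x$ (resp.\ $x_n\xrightarrow{uo}x$). In part~\eqref{partial2}, Lemma~\ref{comparison}\eqref{comp1} then yields $z_n^+\xrightarrow{uo}x^+$.

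Second I would prove almost order boundedness, the key sub-claim being that for every $y\in X_+$ the \emph{closed} martingale $(E_ny)_n$ is almost order bounded. Since $X$ is order continuous, the weak unit $x_0$ is a quasi-interior point, so $\norm{(y-kx_0)^+}=\norm{y-y\wedge kx_0}\to 0$; given $\varepsilon>0$ choose $k$ with $\norm{(y-kx_0)^+}<\varepsilon/M$. Writing $E_ny=E_n(y\wedge kx_0)+E_n((y-kx_0)^+)$ and using $E_nx_0=x_0$, we have $0\le E_n(y\wedge kx_0)\le kx_0$ and $\norm{E_n((y-kx_0)^+)}<\varepsilon$, so $\{E_ny:n\}\subset[-kx_0,kx_0]+\varepsilon B_X$. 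In part~\eqref{partial1}, $x_n=E_nx=E_nx^+-E_nx^-$ is a difference of two almost order bounded families, hence almost order bounded. In part~\eqref{partial2}, $0\le z_n^+\le E_nx^+$, so $(z_n^+)$ is dominated by an almost order bounded family and is therefore itself almost order bounded, via the criterion $A\subset[-u,u]+\varepsilon B_X\iff\sup_{a\in A}\norm{(\abs{a}-u)^+}\le\varepsilon$ and monotonicity of the lattice norm. Combining with the uo-convergence from the previous step, Proposition~\ref{ncon} gives norm convergence of $(x_n)$ to $x$ (resp.\ of $(z_n^+)$ to $x^+$), completing both parts.

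The main obstacle is identifying the limit of $(z_n^+)$ in part~\eqref{partial2}: the map $z\mapsto z^+$ is not weakly continuous, so $z_{n_k}\xrightarrow{w}x$ does not yield $z_{n_k}^+\xrightarrow{w}x^+$, and the limit of $(z_n^+)$ cannot be read off from the weak data directly. The remedy is precisely the detour through uo-convergence above --- first upgrade the subsequential weak limit to a uo-limit (Theorem~\ref{wcuc}), then propagate it to the full sequence (Remark~\ref{trivial}\eqref{trivial1}), and only then pass to positive parts (Lemma~\ref{comparison}\eqref{comp1}), an operation under which uo-convergence \emph{is} continuous. A secondary subtlety is that almost order boundedness of $(z_n^+)$ is not visible from $(z_n)$ itself but must be extracted from the domination $z_n\le E_nx$ and the boundedness hypothesis on the filtration.
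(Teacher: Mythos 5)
Your proposal is correct; the uo-convergence half coincides with the paper's (it is exactly the content and proof of Proposition~\ref{nconf}\eqref{nconf2}, which you re-derive rather than cite), but your route to norm convergence is genuinely different. The paper handles part~\eqref{partial1} by the Uhl--Troitsky closed-martingale argument: $x_n=E_nx$ with $x\in\overline{M}^{w}=\overline{M}^{\norm{\cdot}}$ for $M=\bigcup_n\mathrm{Range}(E_n)$, and $E_ny\to y$ on $M$ extends to $\overline{M}^{\norm{\cdot}}$ by equiboundedness of $(E_n)$ --- no almost order boundedness needed at all. For part~\eqref{partial2} the paper shows $x^+\in\overline{M}^{\norm{\cdot}}$ using that each $\mathrm{Range}(E_n)$ is a sublattice (cited from \cite[Theorem~5.59(iv)]{abr}), deduces that $(E_nx^+)$ is norm convergent hence almost order bounded, dominates $z_n^+\leq E_nx^+$, checks that $(z_n^+)$ is itself a submartingale, and invokes Proposition~\ref{helpful}. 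You instead prove directly that $(E_ny)$ is almost order bounded for every $y\in X_+$ from $E_nx_0=x_0$ and quasi-interiority of $x_0$ --- which is precisely the argument the paper defers to Theorem~\ref{acon1} --- and then finish both parts uniformly with Proposition~\ref{ncon}, identifying the limit of $(z_n^+)$ by applying Lemma~\ref{comparison}\eqref{comp1} to the already-established $z_n\xrightarrow{uo}x$. Your version buys uniformity across the two parts and avoids both the sublattice property of the ranges and the verification that $(z_n^+)$ is a submartingale, at the cost of leaning entirely on the $E_nx_0=x_0$ half of the bistochastic condition; the paper's part~\eqref{partial1} argument is more economical there, and the paper's identification of the limit of $(z_n^+)$ happens to coincide with yours in the end. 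All the individual steps you use (the norm estimate $\norm{E_n((y-kx_0)^+)}<\varepsilon$, the stability of almost order boundedness under differences and under domination, and the passage from a uo-convergent subsequence of a uo-Cauchy sequence to the whole sequence) check out.
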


\begin{proof}
For \eqref{partial1}, suppose $x_{n_k}\xrightarrow{w} x$. By Proposition~\ref{nconf}, we know $x_n$ uo-converges to $x$. By Lemma~\ref{weaksub}, we have $x_n=E_nx$ for $n\geq 1$. Put $M=\bigcup_{n=1}^\infty\mathrm{ Range}(E_n)$. Then $x \in\overline{M}^{w}= \overline{M}^{\norm{\cdot}}$, and thus $x_n=E_nx\rightarrow x$ (this is true for all $y\in M$, and thus is also true for $x\in \overline{M}^{\norm{\cdot}} $ due to the boundedness of $(E_n)$).\par

For \eqref{partial2}, we apply Proposition~\ref{helpful}. Let $z_{n_k}\xrightarrow{w} x$, then $x \in \overline{M}^{\norm{\cdot}}$. Since the range of each $E_n$ is a sublattice of $X$ (\cite[Theorem~5.59(iv)]{abr}), so is $\overline{M}^{\norm{\cdot}}$. It follows that $x^+\in\overline{ M}^{\norm{\cdot}}$. Thus $(E_nx^+)$ is norm convergent, and in particular, is almost order bounded. Since $z_n\leq E_nx$ by Lemma~\ref{weaksub}, we have $z_n^+\leq E_nx^+$ for all $n$, implying that $(z_n^+)$ is also almost order bounded. Moreover, it is straightforward to verify that $(z_n^+)$ is a submartingale relative to $(E_n)$. Therefore, $(z_n^+)$ converges uo- and in norm to the same limit, by Proposition~\ref{helpful}. By Proposition~\ref{nconf}, $z_n$ uo-converges to $x$. Hence, the uo-limit of $z_n^+$ is $x^+$.
\end{proof}

\begin{remark}The norm convergence in Theorem~\ref{partial}\eqref{partial1} and its proof are due to Uhl \cite[Corollary 3]{uhl71} and \cite[Theorem~17]{tro05}. We give here a direct proof of the norm convergence in Theorem~\ref{partial}\eqref{partial2}. It follows from Lemma~\ref{weaksub} that $z_n \leq E_nx$ for each $n \in \mathbb{N}$. Thus $(z_n^+-x^+)^+\leq (z_n-x)^+\leq (E_nx-x)^+\xrightarrow{\norm{\cdot}}0$, since $x\in \overline{M}^{\norm{\cdot}}$. Therefore, $$ z_n^+ \vee x^+=(z_n^+-x^+)^+ +x^+ \xrightarrow{||\cdot||} x^+.$$Since $z_n \xrightarrow{{uo}} x$ by Proposition~\ref{nconf} , we have that $z_n^+\xrightarrow{uo}x^+$ and $z_n^+ \wedge x^+ \xrightarrow{{uo}} x^+$. Therefore, $z_n^+ \wedge x^+ \xrightarrow{o} x^+$. It follows that $$ z_n^+ \wedge x^+ \xrightarrow{{|| \cdot ||}} x^+.$$ Combining the above, we get $|z_n^+-x^+|=z_n^+ \vee x^+- z_n^+ \wedge x^+ \xrightarrow{||\cdot||}0$.
\end{remark}

We have used the fact that if $x\in\overline{\cup_1^\infty\mathrm{Range}(E_n)}$ then $(E_nx)$ converges in norm. This is actually true for arbitrary $x\in X$ if the filtration is bounded and abstract bistochastic; see~\cite[Chapter II, Theorem~13]{meyer} for the classical case.

\begin{theorem}\label{acon1} Let $X$ be an order continuous Banach lattice and $(E_n)$ a bounded abstract bistochastic filtration. Then for any $x\in X$, $(E_nx)$ converges uo- and in norm to the same limit.
\end{theorem}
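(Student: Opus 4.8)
The plan is to show that $(E_n x)_{n\ge 1}$ is a norm bounded submartingale which is, in addition, \emph{almost order bounded} in $X$, and then to invoke Proposition~\ref{helpful}.

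First I would note that $(E_n x)$ is a martingale — in particular a submartingale — relative to $(E_n)$: indeed $E_n x\in\mathrm{Range}(E_n)$, and for $m\ge n$ the relation $E_nE_m=E_{m\wedge n}$ gives $E_n(E_m x)=E_n x$, which is the equality case of the submartingale inequality. Since the filtration is bounded, $\sup_n\norm{E_n x}\le\bigl(\sup_n\norm{E_n}\bigr)\norm{x}<\infty$.

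The heart of the matter — and the only place where a genuine idea is needed — is to prove that $\{E_n x:n\ge 1\}$ is almost order bounded. Here I would exploit the defining feature of an abstract bistochastic filtration, namely that $E_n x_0=x_0$ for all $n$ (condition $(\diamond)$). Fix $\varepsilon>0$ and put $C=\sup_n\norm{E_n}$. Since $X$ is order continuous, the weak unit $x_0$ is a quasi-interior point, so $\norm{(\abs{x}-Nx_0)^+}\to 0$ as $N\to\infty$; choose $N$ with $\norm{(\abs{x}-Nx_0)^+}<\varepsilon/C$. Decomposing $\abs{x}=(\abs{x}\wedge Nx_0)+(\abs{x}-Nx_0)^+$, applying the positive operator $E_n$, and using both $E_n(\abs{x}\wedge Nx_0)\le E_n(Nx_0)=Nx_0$ and $\abs{E_n x}\le E_n\abs{x}$, I obtain
\[
\abs{E_n x}\le Nx_0+E_n\bigl((\abs{x}-Nx_0)^+\bigr),\qquad n\ge 1 .
\]
Taking positive parts yields $(\abs{E_n x}-Nx_0)^+\le E_n\bigl((\abs{x}-Nx_0)^+\bigr)$, and hence $\norm{(\abs{E_n x}-Nx_0)^+}\le C\,\norm{(\abs{x}-Nx_0)^+}<\varepsilon$ for every $n$. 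By the description of almost order boundedness recalled in Section~2, this means $\{E_n x\}\subset[-Nx_0,Nx_0]+\varepsilon B_X$; as $\varepsilon>0$ was arbitrary, $(E_n x)$ is almost order bounded.

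Finally, $(E_n x)$ is an almost order bounded submartingale relative to the abstract bistochastic filtration $(E_n)$ on the order continuous Banach lattice $X$, so by (the first part of) Proposition~\ref{helpful} it converges uo- and in norm to the same limit, which is the assertion. The main obstacle, as indicated, is the almost order boundedness step: it is precisely the identity $E_n x_0=x_0$ that allows one to confine every $E_n x$, up to an $\varepsilon$-perturbation in norm, inside a single order interval $[-Nx_0,Nx_0]$, thereby circumventing the fact that the maximal function $\sup_n E_n\abs{x}$ generally fails to belong to $X$; the remaining steps are routine and rely only on results already established.
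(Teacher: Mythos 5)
Your proposal is correct and follows essentially the same route as the paper: both arguments use the identity $E_nx_0=x_0$ together with the quasi-interior-point property of $x_0$ to truncate $x$ (the paper writes $\norm{x-x\wedge k_0x_0}<\varepsilon$ after assuming $x>0$; you phrase it via $(\abs{x}-Nx_0)^+$) and thereby place all $E_nx$ in $[-Nx_0,Nx_0]+C\varepsilon B_X$, concluding with Proposition~\ref{helpful}. The only cosmetic difference is that you avoid the reduction to $x>0$ by working with $\abs{x}$ directly.
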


\begin{proof} Let $x_0$ be as in the double condition $(\diamond)$. Put $C:=\sup_n\norm{E_n}$. Without loss of generality, assume $x>0$. For any $\varepsilon>0$, take $k_0$ such that $\norm{x-x\wedge k_0x_0}<\varepsilon$. Then \begin{align}\label{a1}\norm{E_n(x\wedge k_0x_0)-E_nx}\leq C\varepsilon,\quad \forall\;n\geq 1.\end{align} Since $0\leq E_n(x\wedge k_0x_0)\leq E_n(k_0x_0)=k_0x_0$, we have $\{E_nx\}\subset [0,k_0x_0]+C\varepsilon B_X$. It follows that $(E_nx)$ is almost order bounded. Thus it converges uo- and in norm to the same limit by Proposition~\ref{helpful}.
\end{proof}

\medskip

\textbf{Acknowledgements}. The authors would like to thank Dr.~Vladimir Troitsky for suggesting them to study abstract martingales and for many helpful discussions.

\end{document}